\def\@secnumfont{\bfseries\scshape}
\def\section{\@startsection{section}{1}
  \z@{.9\linespacing\@plus\linespacing}{.5\linespacing}%
  {\normalfont\large\bfseries\scshape\centering}}
\def\subsection{\@startsection{subsection}{2}%
  \z@{.5\linespacing\@plus.7\linespacing}{-.5em}%
  {\normalfont\bfseries\scshape}}
\def\@secnumfont{\scshape}
\def\subsubsection{\@startsection{subsubsection}{3}%
  \z@{.5\linespacing\@plus.7\linespacing}{-.5em}%
  {\normalfont\scshape}}
\def\specialsection{\@startsection{section}{1}%
  \z@{\linespacing\@plus\linespacing}{.5\linespacing}%
  {\normalfont\centering\large\bfseries\scshape}}
\renewenvironment{proof}[1][\proofname]{\par
\pushQED{\qed}%
\normalfont \topsep4\p@\@plus4\p@\relax
\trivlist
\item[\hskip\labelsep
\bfseries
#1\@addpunct{.}]\ignorespaces
}{%
\popQED\endtrivlist\@endpefalse
}
\newcommand \Dotfill {\leavevmode \leaders \hb@xt@ 6pt{\hss .\hss }\hfill \kern \z@}
\def\@tocline#1#2#3#4#5#6#7{\relax
  \ifnum #1>\c@tocdepth 
  \else
    \par \addpenalty\@secpenalty\addvspace{#2}%
    \begingroup \hyphenpenalty\@M
    \@ifempty{#4}{%
      \@tempdima\csname r@tocindent\number#1\endcsname\relax
    }{%
      \@tempdima#4\relax
    }%
    \parindent\z@ \leftskip#3\relax \advance\leftskip\@tempdima\relax
    \rightskip\@pnumwidth plus4em \parfillskip-\@pnumwidth
    #5\leavevmode\hskip-\@tempdima
      \ifcase #1
       \or\or \hskip 1.65em \or \hskip 3.3em \else \hskip 4.95em \fi%
      #6\nobreak\relax
    \Dotfill
    \hbox to\@pnumwidth{\@tocpagenum{#7}}\par
    \nobreak
    \endgroup
  \fi}
\def\l@section{\@tocline{1}{0pt}{1pc}{}{\scshape}}
\renewcommand{\tocsection}[3]{%
\indentlabel{\@ifnotempty{#2}{\ignorespaces#1 #2.\hskip 0.7em}}#3}
\def\l@subsection{\@tocline{2}{0pt}{1pc}{5pc}{}}
\def\l@subsubsection{\@tocline{3}{0pt}{1pc}{7pc}{}}
\numberwithin{equation}{section}
\newtheoremstyle{mytheorem}{.7\linespacing\@plus.3\linespacing}{.7\linespacing\@plus.3\linespacing}%
     {\itshape}
     {}
     {\bfseries}
     {. }
     {0.3ex}
     {\thmname{{\bfseries #1}}\thmnumber{ {\bfseries #2}}\thmnote{ (#3)}}  
\theoremstyle{mytheorem}
\newtheorem{theorem}{Theorem}[section]
\newtheorem{lemma}[theorem]{Lemma}
\newtheorem{proposition}[theorem]{Proposition}
\newtheorem{remark}[theorem]{Remark}
\newtheorem{assumption}[theorem]{Assumption}
\newtheorem{example}[theorem]{Example}
\newcommand{\bbE}{{\ensuremath{\mathbb E}} }
\newcommand{\bbP}{{\ensuremath{\mathbb P}} }
\newcommand{\bbT}{{\ensuremath{\mathbb T}} }
\newcommand{\bx}{{\ensuremath{\bold x}} }
\newcommand{\by}{{\ensuremath{\bold y}} }
\newcommand{\bz}{{\ensuremath{\bold z}} }
\newcommand{\cB}{{\ensuremath{\mathcal B}} }
\newcommand{\cF}{{\ensuremath{\mathcal F}} }
\newcommand{\cM}{{\ensuremath{\mathcal M}} }
\newcommand{\cN}{{\ensuremath{\mathcal N}} }
\newcommand{\cV}{{\ensuremath{\mathcal V}} }
\newcommand{\cW}{{\ensuremath{\mathcal W}} }
\newcommand{\cZ}{{\ensuremath{\mathcal Z}} }
\DeclareMathSymbol{\leqslant}{\mathalpha}{AMSa}{"36} 
\DeclareMathSymbol{\geqslant}{\mathalpha}{AMSa}{"3E} 
\DeclareMathSymbol{\eset}{\mathalpha}{AMSb}{"3F}     
\newcommand{\dd}{\text{\rm d}}             
\newcommand{\be}{\begin{equation}}
\newcommand{\ee}{\end{equation}}
\newcommand{\R}{\mathbb{R}}
\newcommand{\Z}{\mathbb{Z}}
\newcommand{\N}{\mathbb{N}}
\newcommand{\T}{\mathbb{T}}
\newcommand{\PEfont}{\mathrm}
\newcommand{\p}{\ensuremath{\PEfont P}}
\newcommand{\E}{\PEfont E}
\renewcommand{\P}{\p}
\DeclareMathOperator{\bbvar}{\ensuremath{\mathbb{V}ar}}
\DeclareMathOperator{\bbcov}{\ensuremath{\mathbb{C}ov}}
\newcommand{\ind}{\mathds{1}}
\newcommand{\eps}{\varepsilon}
\renewcommand{\epsilon}{\varepsilon}
\renewcommand{\phi}{\varphi}
\renewcommand{\theta}{\vartheta}
\renewcommand{\rho}{\varrho}
\newenvironment{myenumerate}{%
\renewcommand{\theenumi}{\arabic{enumi}}%
\renewcommand{\labelenumi}{{\rm(\theenumi)}}%
\begin{list}{\labelenumi}
	{%
	\setlength{\itemsep}{0.4em}%
	\setlength{\topsep}{0.5em}%
	\setlength\leftmargin{2.45em}%
	\setlength\labelwidth{2.05em}%
	\setlength{\labelsep}{0.4em}%
	\usecounter{enumi}%
	}%
	}%
{\end{list}
}
\renewenvironment{enumerate}{
\begin{myenumerate}}%
{\end{myenumerate}}
\newenvironment{myitemize}{%
\begin{list}{$\bullet$}%
 	{%
	\setlength{\itemsep}{0.4em}%
	\setlength{\topsep}{0.5em}%
	\setlength\leftmargin{2.65em}%
	\setlength\labelwidth{2.65em}%
	\setlength{\labelsep}{0.4em}%
	}%
	}%
{\end{list}}
\renewenvironment{itemize}{
\begin{myitemize}}%
{\end{myitemize}}
\newcommand{\rme}{\mathrm{e}}
\newcommand{\sff}{\mathsf{f}}
\newcommand{\sfg}{\mathsf{g}}
\newcommand{\sfq}{\mathsf{q}}
\newcommand{\sfA}{\mathsf{A}}
\newcommand{\sfC}{\mathsf{C}}
\newcommand{\sfQ}{\mathsf{Q}}
\newcommand{\sfU}{\mathsf{U}}
\newcommand{\sfc}{\mathsf{c}}
\newcommand{\scrC}{\mathscr{C}}
\newcommand{\bulk}{\mathrm{bulk}}
\newcommand{\pairs}{\mathrm{pairs}}
\newcommand{\others}{\mathrm{others}}
\title{Quasi-critical fluctuations for 2d directed polymers}
\subjclass[2020]{Primary: 82B44;  Secondary: 60F05, 35R60}
\keywords{Edwards-Wilkinson Fluctuations, Gaussian Fluctuations,
Directed Polymer in Random Environment, Stochastic Heat Equation, KPZ Equation.}
\date{\today}
\author[F. Caravenna]{Francesco Caravenna}
\address{Dipartimento di Matematica e Applicazioni\\
 Universit\`a degli Studi di Milano-Bicocca\\
 via Cozzi 55, 20125 Milano, Italy}
\email{francesco.caravenna@unimib.it}
\author[F. Cottini]{Francesca Cottini}
\address{Département Mathématiques\\
Université du Luxembourg\\
Maison du Nombre\\
6, Avenue de la Fonte\\
L-4364 Esch-sur-Alzette, Luxembourg}
\email{francesca.cottini@uni.lu}
\author[M. Rossi]{Maurizia Rossi}
\address{Dipartimento di Matematica e Applicazioni\\
 Universit\`a degli Studi di Milano-Bicocca\\
 via Cozzi 55, 20125 Milano, Italy}
\email{maurizia.rossi@unimib.it}
\begin{document}

\maketitle

\begin{abstract}
We study the 2d directed polymer in random
environment in a novel \emph{quasi-critical regime}, 
which interpolates between the much studied 
sub-critical and critical regimes. We prove
Edwards-Wilkinson fluctuations throughout the quasi-critical regime, showing that
the diffusively rescaled partition functions are asymptotically Gaussian.
We deduce a corresponding result
 for the critical 2d Stochastic Heat Flow.
A key challenge is the lack of hypercontractivity, which 
we overcome deriving new moment estimates.
\end{abstract}

\section{Introduction}

We study the 2d directed polymer in random environment,
a key model in statistical mechanics which has been the object of deep
mathematical investigation (see the recent monograph \cite{C17}).
More specifically, we focus on the \emph{partition functions} and their scaling limits, which
have close links to singular stochastic PDEs,
such as the Stochastic Heat Equation and the KPZ equation,
as we discuss in Subsection~\ref{sec:SPDEs}.

The partition functions of the 2d directed polymer in random environment are defined by
\begin{equation} \label{eq:Z}
	Z_{N,\beta}^{\omega}(z) := \E \big[ \rme^{\sum_{n=1}^{N} 
	\{ \beta  \omega(n,S_n)-\lambda(\beta)\}} \big| \, S_0 = z\big]\,,
\end{equation}
where $N\in\N$ is the system size, $\beta \ge 0$ is the disorder strength,
$z\in\Z^2$ is the starting point, and we have two independent sources of randomness:
\begin{itemize}
	\item $S=(S_n)_{n \ge 0}$ is the simple random walk on $\Z^2$
	with law $\P$ and expectation $\E$;
	\item $\omega = (\omega(n,z))_{n \in \N , \, z \in \Z^2}$ are i.i.d.\ random variables
	with law $\bbP$,
	independent of $S$, with
\begin{equation}\label{eq:omega}
		\bbE [\omega]=0\,, \qquad \bbE [\omega^2] = 1\,, 
		\qquad \lambda(\beta):= \log \bbE [\rme^{\beta \omega}] < \infty \quad \text{for }\beta>0\,.
\end{equation}
\end{itemize}
The factor $\lambda(\beta)$ in \eqref{eq:Z}
has the effect to normalise the expectation:
\begin{equation}\label{eq:Zmean}
	\bbE \big[ Z_{N,\beta}^{\omega}(z) \big] = 1 \,.
\end{equation}
Note that $(Z_{N,\beta}^{\omega}(z))_{z\in\Z^2}$ is a family of (correlated)
positive random variables, depending on the random variables~$\omega$
which play the role of \emph{disorder} (or \emph{random environment}).

\smallskip

In this paper we investigate 
the \emph{diffusively rescaled} partition functions
$Z_{N,\beta}^{\omega}(\lfloor \sqrt{N} x \rfloor)$, 
where $\lfloor \cdot \rfloor$ denotes
the integer part. For an integrable test function 
$\varphi : \R^2 \to \R$ we set
\begin{equation}\label{eq:ZNav}
	Z_{N,\beta}^\omega(\varphi) := 
	\int_{\R^2} Z_{N,\beta}^{\omega}(\lfloor \sqrt{N} x \rfloor) \, \varphi(x) \, \dd x
	= \frac{1 }{N} \sum_{z \in \Z^2} 
	Z_{N,\beta}^{\omega}(z)  \, \varphi_N (z) \,,
\end{equation}
where for $R>0$ we define $\varphi_R: \Z^2 \to \R$ by
\begin{equation}\label{eq:phiN}
	\varphi_R(z) := \int\limits_{[z_1, z_1 + 1) \times [z_2, z_2 + 1)} 
	\varphi \big ( \tfrac{y}{\sqrt R}\big ) \, \dd y 
	\qquad \text{for } z = (z_1, z_2) \in \Z^2 \,.
\end{equation}
(note that $\varphi_R(z) \approx \varphi \big ( \tfrac{z}{\sqrt R}\big )$ if $\varphi$ is continuous).
We look for the convergence in distribution of $Z_{N,\beta}^\omega(\varphi)$ as $N\to\infty$,
under an appropriate rescaling of the disorder strength $\beta = \beta_N$.

\subsubsection*{Notation}

We denote by $\varphi \in C_c(\R^2)$ the space of
functions $\varphi: \R^2 \to \R$ that are continuous and compactly supported.
We write $a_N \ll b_N$, $a_N \sim b_N$, $a_N \gg b_N$ to mean that 
the ratio $a_N/b_N$ converges respectively to $0, 1, \infty$
as $N\to\infty$.

\subsection{The phase transition}
It is known since \cite{CSZ17b} that the 
partition functions 
undergo a \emph{phase transition} on the scale $\beta^2 = \beta_N^2 = O(\frac{1}{\log N})$,
that we now recall.

Let $R_N$ be the \emph{expected replica overlap} of two independent
simple random walks $S,S'$:
\begin{equation}\label{eq:RN}
	R_N:= \E^{\otimes 2} \bigg[ \sum_{n=1}^N \ind_{\{S_n=S'_n\}} \bigg]
	=\sum_{n=1}^N \P (S_{2n}=0 ) 
	= \frac{\log N}{\pi}+O(1) \,,
\end{equation}
see the local limit theorem \eqref{eq:llt}.
Using the more convenient parameter
\begin{equation} \label{eq:sigma}
	\sigma_\beta^2 := \bbvar[\rme^{\beta\omega-\lambda(\beta)}] 
	= \rme^{\lambda(2\beta) - 2\lambda(\beta)}-1
\end{equation}
(note that $\sigma_\beta \sim \beta$ as $\beta \downarrow 0$,
since $\lambda(\beta) \sim \frac{1}{2}\beta^2$),
we can rescale $\beta = \beta_N$ as follows:
\begin{equation}\label{eq:hatbeta}
	\sigma_{\beta_N}^2 = \frac{\hat{\beta}^2}{R_N} \sim \frac{\hat{\beta}^2 \, \pi}{\log N}\,, 
	\qquad \text{with} \quad \hat\beta \in (0,\infty) \,.
\end{equation}
Let us recall some key results on the scaling limit of $Z_{N,\beta}^\omega(\varphi)$
from \eqref{eq:ZNav} for $\beta = \beta_N$.

\begin{itemize}
\item In the \emph{sub-critical regime} $\hat\beta < 1$,
after centering and rescaling by $\sqrt{\log N}$, the averaged partition function
$Z_{N,\beta_N}^\omega(\varphi)$ is asymptotically Gaussian, see
\cite{CSZ17b}:\footnote{The result proved 
in \cite[Theorem~2.13]{CSZ17b} actually involves a space-time average, but the same result
for the space average as in \eqref{eq:ZNav} follows by similar arguments, see \cite{CSZ20}.}
\begin{equation}\label{eq:EWsubcritical}
	\hat\beta \in (0,1): \qquad
		\sqrt{\log N}\, \big\{ Z_{N,\beta_N}^\omega(\varphi) - \bbE[Z_{N,\beta_N}^\omega(\varphi)] \big\}
		\xrightarrow[N\to \infty]{d} \, \mathcal{N} \big( 0 \,, 
		v_{\varphi, \, \hat{\beta}}\big)\,,
\end{equation}
for an explicit limiting variance 
$v_{\varphi, \, \hat{\beta}} \in (0, \infty)$
(which \emph{diverges} as $\hat\beta \uparrow 1$).

\item 
In the \emph{critical regime} $\hat\beta = 1$, actually
in the \emph{critical window} $\hat\beta^2 = 1 + \frac{\theta + o(1)}{\log N}$
with $\theta\in\R$,
the averaged partition function $Z_{N,\beta_N}^\omega(\varphi)$ 
is asymptotically \emph{non Gaussian}, see \cite{CSZ23}:
\begin{equation}\label{eq:EWcritical}
	\hat\beta = 1 + \tfrac{\theta + o(1)}{\log N}: \qquad
	Z_{N,\beta_N}^\omega(\varphi) \xrightarrow[N\to \infty]{d} \, 
	\mathscr{Z}^{\theta}(\varphi) = \int_{\R^2} \varphi(x) \, \mathscr{Z}^{\theta}(\dd x) \,,
\end{equation}
where $\mathscr{Z}^{\theta}(\dd x)$ is a non-trivial random measure on $\R^2$
called the Stochastic Heat Flow.
\end{itemize}

Note that the sub-critical convergence \eqref{eq:EWsubcritical} involves a rescaling factor
$\sqrt{\log N}$, while \emph{no rescaling
is needed for the critical convergence \eqref{eq:EWcritical}}.
In view of this discrepancy, it is natural to investigate the transition
between these regimes.

\subsection{Main result}
To interpolate between the sub-critical regime $\hat{\beta} < 1$
and the critical regime $\hat\beta = 1$,
we consider a \emph{quasi-critical regime} in which
\emph{$\hat\beta \uparrow 1$ but
slower than the critical window 
$\hat\beta^2 = 1 + O(\frac{1}{\log N})$}.
Recalling \eqref{eq:RN} and \eqref{eq:hatbeta}, we fix $\beta =\beta_N$ such that
\begin{equation}\label{eq:quasicrit}
	\sigma_{\beta_N}^2 = \frac{1}{R_N} \bigg( 1 - \frac{\theta_N}{\log N}\bigg)
	\qquad \text{for some} \quad 1 \ll \theta_N \ll \log N  \,.
\end{equation}
(Note that $\theta_N = O(1)$ would correspond to the critical window,
while $\theta_N = (1-\hat\beta^2)\log N$ with $\hat\beta \in (0,1)$ 
would correspond to the sub-critical regime.)

Our main result shows that the
averaged partition function $Z_{N,\beta_N}^\omega(\varphi)$ 
has Gaussian fluctuations \emph{throughout the quasi-critical regime \eqref{eq:quasicrit}},
after centering and rescaling by the factor $\sqrt{\theta_N}$,
whose rate of divergence can be arbitrarily slow.
This shows that \emph{non-Gaussian behavior does not appear before the critical regime}.
We call this result \emph{Edwards-Wilkinson fluctuations} in view of its link with stochastic
PDEs, that we discuss in Subsection~\ref{sec:SPDEs}.

\begin{theorem}[Quasi-critical Edwards-Wilkinson fluctuations]\label{th:EWquasicrit}
Let  $Z_{N,\beta}^\omega(\varphi)$ denote the diffusively rescaled and
averaged partition function of the 2d directed polymer model,
see \eqref{eq:Z} and \eqref{eq:ZNav}, for disorder variables $\omega$ which satisfy \eqref{eq:omega}.
Then, for $(\beta_N)_{N\in\N}$ in the quasi-critical regime, see \eqref{eq:sigma}
and \eqref{eq:quasicrit},
we have the convergence in distribution
\begin{equation}\label{eq:main}
	\forall \varphi \in C_c(\R^2): \qquad
	\sqrt{\theta_N} \,
	\big\{ Z_{N,\beta_N}^\omega(\varphi) - \bbE[Z_{N,\beta_N}^\omega(\varphi)] \big\}
	\xrightarrow[N\to \infty]{d} \, \mathcal{N} \big( 0 \,, v_{\varphi}\big)\,,
\end{equation}
where the limiting variance is given by
\begin{equation} \label{eq:sigmaphi}
	v_{\varphi} := 
	\int\limits_{\R^2 \times \R^2} \varphi(x) \, K(x,x') \, \varphi(x') \, \mathrm{d}x \, \mathrm{d}x'
	\qquad \text{with} \quad
	K(x,x') := \int_{0}^{1} \frac{1}{2u} \, \rme^{-\frac{|x-x'|^2}{2u}} \, \mathrm{d}u \,.
\end{equation}
\end{theorem}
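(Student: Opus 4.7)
The plan is to prove convergence in distribution by the method of moments, based on the polynomial chaos representation of the partition function. \emph{Chaos expansion.} Set $\xi(n,z) := e^{\beta_N\omega(n,z) - \lambda(\beta_N)} - 1$, which are i.i.d.\ centred variables with variance $\sigma_{\beta_N}^2$. Expanding $\prod_{n=1}^N (1 + \xi(n, S_n))$, averaging over $S$ started at $z$ and then against $\varphi_N$, one obtains
\[
W_N := Z_{N,\beta_N}^\omega(\varphi) - \bbE[Z_{N,\beta_N}^\omega(\varphi)] = \sum_{k=1}^{N} \mathcal{Z}_N^{(k)}(\varphi),
\]
where $\mathcal{Z}_N^{(k)}(\varphi)$ is a sum over ordered configurations $((n_i,z_i))_{i=1}^k$, weighted by products of simple random walk transition kernels $q_{n_i - n_{i-1}}(z_i - z_{i-1})$ integrated against $\varphi_N$ and multiplied by $\prod_{i=1}^k \xi(n_i, z_i)$.

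\emph{Variance and scaling.} By orthogonality of distinct chaos orders, $\var(W_N) = \sum_k \var(\mathcal{Z}_N^{(k)}(\varphi))$; by the local central limit theorem and a Riemann-sum approximation each variance behaves like $\sigma_{\beta_N}^2 (\sigma_{\beta_N}^2 R_N)^{k-1}$ times a bounded functional of $\varphi$. Since $\sigma_{\beta_N}^2 R_N = 1 - \theta_N/\log N$, the sum over $k$ is essentially geometric with ratio $\to 1$, producing
\[
\theta_N \cdot \var(W_N) \longrightarrow v_\varphi,
\]
with the kernel $K(x,x')$ of \eqref{eq:sigmaphi} emerging as the continuum limit of the two-point random walk transition density integrated over the diffusive time scale $u = n/N \in (0,1)$. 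Crucially, the effective number of contributing chaos orders is of order $\log N / \theta_N \to \infty$, so the Gaussian limit must come from a collective effect across many chaos orders rather than from a single dominant one; in particular the linear chaos $\mathcal{Z}_N^{(1)}(\varphi)$ alone is negligible after the $\sqrt{\theta_N}$-rescaling.

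\emph{Moment method.} For every $p \in \N$, show
\[
\theta_N^{p/2} \bbE[W_N^p] \longrightarrow m_p(v_\varphi),
\]
the $p$-th moment of $\mathcal{N}(0,v_\varphi)$. Expand $\bbE[W_N^p]$ as a sum over tuples $(k_1, \ldots, k_p)$ of $\bbE[\prod_{j=1}^p \mathcal{Z}_N^{(k_j)}(\varphi)]$ and use independence of the $\xi(n,z)$'s: only diagrams in which every space-time coordinate appears with multiplicity $\ge 2$ survive. The leading contribution is given by \emph{perfect pairings} between the $p$ chaos factors (each $\mathcal{Z}_N^{(k_j)}$ being paired with exactly one $\mathcal{Z}_N^{(k_{j'})}$ of equal chaos order, with identical time-space labels) and equals $(p-1)!! \, v_\varphi^{p/2} \theta_N^{-p/2}$ for $p$ even and $0$ for $p$ odd. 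All other (non-pair) diagrams must be shown to be of strictly smaller order.

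\emph{Main obstacle: moment control without hypercontractivity.} In the sub-critical regime one invokes $\|\mathcal{Z}_N^{(k)}\|_{L^p} \le c_p^k \|\mathcal{Z}_N^{(k)}\|_{L^2}$ to reduce higher moments to variances; in the quasi-critical regime, summing such bounds over the $\asymp \log N/\theta_N$ relevant chaos orders yields a divergent series because $\sigma_{\beta_N}^2 R_N \to 1$. The plan is to derive sharp moment estimates directly from the combinatorial structure of the chaos: decompose the diagrams according to the coincidence pattern of time-space points among the $p$ factors; bound each configuration by an appropriate product of two-point overlap kernels, exploiting $\bbE[|\xi|^r] \le C_r \sigma_{\beta_N}^2$ for $r \ge 2$ (a consequence of \eqref{eq:omega} and the smoothness of $\lambda$ near $0$) together with the local central limit theorem. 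Non-pair diagrams should then be seen to carry extra factors of either $|\sigma_{\beta_N}^2 R_N - 1| \asymp \theta_N/\log N$ (from suppressed geometric sums) or $R_N^{-1} \asymp \pi/\log N$ (from each additional coincidence beyond the pairing skeleton), supplying the smallness needed to match the $\theta_N^{-p/2}$ scaling of the paired diagrams. Turning this combinatorial picture into uniform estimates is the hard part of the argument.
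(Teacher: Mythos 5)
Your route (direct method of moments on $X_N$, identifying Gaussian pairings) differs from the paper's, which never computes sharp asymptotics of any moment beyond the second: the paper approximates $X_N$ in $L^2$ by a sum of \emph{independent} time-block variables $X_{N,M}^{(i)}$ (disorder restricted to slabs $(\frac{i-1}{M}N,\frac{i}{M}N]$) and then applies the classical CLT for triangular arrays, so that all it needs is an \emph{upper bound} $\limsup_N \bbE[(X_{N,M}^{(i)})^4]\le C/M^2$ (Lyapunov condition), obtained via weighted operator estimates on the moment expansion (Sections~\ref{sec:moment3}--\ref{sec:4th-3}). Your plan instead requires matching the constant $(p-1)!!\,v_\varphi^{p/2}$ for \emph{every} $p$, i.e.\ sharp asymptotics of all mixed chaos diagrams, which is strictly more demanding than what the theorem needs.

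The genuine gap is that the quantitative core is not supplied: you state the correct heuristic (pairings dominate) and then defer exactly the step that is the whole difficulty, namely uniform control of the non-pairing diagrams when the contributing chaos orders number $\asymp \log N/\theta_N$ and hypercontractivity is unavailable. Moreover, your accounting of the suppression mechanism is off in the case that actually matters. The dangerous non-Gaussian diagrams are not mainly those with triple or higher coincidences (those do carry extra factors of order $\sigma_{\beta_N}\asymp (\log N)^{-1/2}$), but the diagrams in which two disjoint pairs \emph{interleave} their collision stretches in time, i.e.\ the pairing structure switches back and forth. Each such switch costs a factor $\sigma_{\beta_N}^2\cdot\frac{1}{1-\sigma_{\beta_N}^2 R_N}\asymp \frac{1}{\log N}\cdot\frac{\log N}{\theta_N}=\frac{1}{\theta_N}$, not $\theta_N/\log N$ or $1/\log N$ as you claim; and it is precisely this $1/\theta_N$ per switch (together with the fact that in the quasi-critical regime a pair's collision stretch occupies a time window of length $\approx N\rme^{-\theta_N}=o(N)$, so independent stretches are typically temporally disjoint) that makes the temporally separated pairing diagrams dominant. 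Establishing this uniformly over all diagram classes and all $r$ is what the paper's Theorems~\ref{th:moments-bound1}--\ref{th:moments-bound2}, the weighted norms with $\cW_t$ and the refined weight $\cV_s^I$, and the boundary/bulk estimates of Subsections~\ref{sec:4th-boundaryL}--\ref{sec:4th-bulk} are for; without an analogue of these bounds (or of the block decomposition that reduces the problem to a single fourth-moment estimate), your argument does not yet prove the theorem.
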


The proof is given in Section~\ref{sec:proofmainth}.
An interesting feature of the quasi-critical regime \eqref{eq:sigma} is that it can be
used to approximate the Stochastic Heat Flow $\mathscr{Z}^{\theta}(\dd x)$ as $\theta \to -\infty$, 
see \eqref{eq:EWcritical}. As a consequence, we can
transfer our main result \eqref{eq:main} to the Stochastic Heat Flow, proving the following version of
Edwards-Wilkinson fluctuations as $\theta \to -\infty$.

\begin{theorem}[Edwards-Wilkinson fluctuations for the SHF]\label{th:EW-SHF}
Denoting by $\mathscr{Z}^{\theta}(\dd x)$ the Stochastic Heat Flow in \eqref{eq:EWcritical},
as $\theta\to-\infty$ we have the convergence in distribution
\begin{equation}\label{eq:main-SHF}
	\forall \varphi \in C_c(\R^2): \qquad
	\sqrt{|\theta|} \,
	\big\{ \mathscr{Z}^{\theta}(\varphi) - \bbE[\mathscr{Z}^{\theta}(\varphi)] \big\}
	\xrightarrow[\theta\to -\infty]{d} \, \mathcal{N} \big( 0 \,, v_{\varphi}\big)\,,
\end{equation}
where the limiting variance $v_{\varphi}$ is the same as in \eqref{eq:sigmaphi}.
\end{theorem}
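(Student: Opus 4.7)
The plan is to deduce Theorem~\ref{th:EW-SHF} from Theorem~\ref{th:EWquasicrit} via a diagonal argument that bridges the critical window \eqref{eq:EWcritical} (which defines the SHF) and the quasi-critical regime \eqref{eq:quasicrit} (in which Theorem~\ref{th:EWquasicrit} operates). The key idea is that for each large $|\theta|$ we can pick $N=N(\theta)$ enormous compared to $|\theta|$, so that a single choice of $\beta_N$ lies simultaneously in the critical window with parameter $\theta$ (so $Z^\omega_{N,\beta_N}(\varphi)$ approximates $\mathscr{Z}^\theta(\varphi)$) and in the quasi-critical regime with $\theta_N=|\theta|\to\infty$ (so Theorem~\ref{th:EWquasicrit} applies).

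First I would upgrade the distributional convergence \eqref{eq:EWcritical} to an $L^2(\bbP)$ convergence: for each fixed $\theta\in\R$ and any sequence $\beta_N$ satisfying $\hat\beta_N^2 = 1 + \frac{\theta+o(1)}{\log N}$, one has
\begin{equation*}
\bigl\| Z^\omega_{N,\beta_N}(\varphi) - \mathscr{Z}^{\theta}(\varphi) \bigr\|_{L^2(\bbP)} \xrightarrow[N\to\infty]{} 0.
\end{equation*}
This is known from the SHF construction in \cite{CSZ23}, where convergence of second (and higher) moments is established along with the distributional limit. It will also be useful that the means satisfy $\bbE[Z^\omega_{N,\beta_N}(\varphi)] \to \bbE[\mathscr{Z}^\theta(\varphi)] = \int \varphi$ (immediate from \eqref{eq:Zmean} and \eqref{eq:ZNav}).

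Next, given any sequence $\theta^{(k)}\to-\infty$, I choose $N_k\to\infty$ so rapidly that: (i) the parameters $\theta_{N_k}:=|\theta^{(k)}|$ satisfy $1 \ll \theta_{N_k} \ll \log N_k$, putting $(N_k,\beta_{N_k})$ in the quasi-critical regime \eqref{eq:quasicrit}; and (ii) the $L^2$-approximation is sharp enough that
\begin{equation*}
\sqrt{|\theta^{(k)}|}\,\bigl\| Z^\omega_{N_k,\beta_{N_k}}(\varphi) - \mathscr{Z}^{\theta^{(k)}}(\varphi) \bigr\|_{L^2(\bbP)} \xrightarrow[k\to\infty]{} 0,
\end{equation*}
together with an analogous vanishing for the centering constants. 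This is feasible because for each fixed $\theta^{(k)}$, the $L^2$-error in (i) tends to $0$ as $N\to\infty$, so we may choose $N_k$ compatibly with both (i) and (ii). By Theorem~\ref{th:EWquasicrit} applied along $(N_k)$ we obtain
\begin{equation*}
\sqrt{|\theta^{(k)}|}\,\bigl\{Z^\omega_{N_k,\beta_{N_k}}(\varphi) - \bbE[Z^\omega_{N_k,\beta_{N_k}}(\varphi)]\bigr\} \xrightarrow[k\to\infty]{d} \mathcal{N}(0,v_\varphi),
\end{equation*}
and (ii) together with Slutsky's lemma transfers the same limit to $\sqrt{|\theta^{(k)}|}\{\mathscr{Z}^{\theta^{(k)}}(\varphi) - \bbE[\mathscr{Z}^{\theta^{(k)}}(\varphi)]\}$. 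Since this holds for every sequence $\theta^{(k)}\to-\infty$, the conclusion \eqref{eq:main-SHF} follows.

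The main obstacle is calibrating the diagonal: the $L^2$-error in the SHF approximation must decay fast enough (once $\theta$ is fixed and $N\to\infty$) that multiplication by the diverging factor $\sqrt{|\theta|}$ still leaves a vanishing remainder. This requires a quantitative (or at least $o(1)$) rate of $L^2$-convergence of the partition functions to the SHF in the critical window, which is already implicit in \cite{CSZ23}. A minor additional point is the control of the centering constants, which follows from \eqref{eq:Zmean} and the explicit mean of the SHF.
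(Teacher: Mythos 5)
Your diagonal argument has exactly the same skeleton as the paper's proof: fix a sequence $\theta_k\to-\infty$, pick $N_k$ so large that both (i) the quasi-critical condition $|\theta_k|\ll\log N_k$ holds, and (ii) the critical-window approximation of $\mathscr{Z}^{\theta_k}$ by $Z^\omega_{N_k,\beta_{N_k}^{\mathrm{crit}}(\theta_k)}$ is good enough; then apply Theorem~\ref{th:EWquasicrit} along $(N_k)$. The difference is in how step (ii) is justified, and there you introduce both an unsupported claim and an unnecessary complication.

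You assert that $\| Z^\omega_{N,\beta_N}(\varphi) - \mathscr{Z}^{\theta}(\varphi)\|_{L^2(\bbP)}\to 0$ ``is known from the SHF construction in \cite{CSZ23}.'' It is not, as stated: \cite{CSZ23} establishes \emph{distributional} convergence of $Z^\omega_{N,\beta_N}(\varphi)$ (together with convergence of moments), but $\mathscr{Z}^\theta$ and the discrete partition functions are not constructed on a common probability space, so an $L^2$-distance between them is not even defined without an additional coupling step (e.g.\ Skorokhod representation plus a uniform-integrability argument). This step is missing, and it is also not needed. The paper works directly at the level of distributional convergence: once $k$ (hence $\theta_k$) is fixed, $\sqrt{|\theta_k|}$ is simply a constant, and a constant multiple of random variables converging in distribution still converges in distribution. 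Thus one directly chooses $N_k$ so that
\begin{equation*}
	\Big| \bbE\Big[ f \Big( \sqrt{|\theta_k|}\big\{ \mathscr{Z}^{\theta_k}(\varphi) - {\textstyle\int\varphi}\big\}\Big) \Big]
	- \bbE\Big[ f \Big( \sqrt{|\theta_k|}\big\{ Z^\omega_{N_k,\beta^{\mathrm{crit}}_{N_k}(\theta_k)}(\varphi) - {\textstyle\int\varphi}\big\}\Big) \Big] \Big| \le \tfrac1k
\end{equation*}
for any bounded continuous $f$, with no coupling, no $L^2$ rate, and no Slutsky argument. Your closing worry about ``calibrating the diagonal'' so the error beats the ``diverging factor $\sqrt{|\theta|}$'' is therefore a red herring: $\theta$ is held fixed while $N\to\infty$, so there is no race. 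In short, the key idea (the diagonal choice $N_k\ge e^{k|\theta_k|}$, which forces $|\theta_k|\ll\log N_k$) is correct and matches the paper, but the $L^2$-approximation step should be replaced by the weaker and readily available distributional approximation.
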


In the rest of the introduction, we first describe the strategy of the proof of Theorem~\ref{th:EWquasicrit}
and we compare it with the literature,  notably with
the proof of the corresponding result \eqref{eq:EWsubcritical} in the sub-critical regime,
pointing out the novel challenges that we need to face. 
We then discuss the connection of our main result \eqref{eq:main}
with stochastic PDEs, in the framework of so-called Edwards-Wilkinson fluctuations,
highlighting future perspectives.

\subsection{Strategy of the proof and comparison with the literature}

We prove Theorem~\ref{th:EWquasicrit} by a Central Limit Theorem under a Lyapunov condition
(see Section~\ref{sec:proofmainth} for a detailed description),
which is close in spirit to the proof of \eqref{eq:EWsubcritical}
 in \cite{CC22} for the sub-critical regime.
On the other hand, the original proof of \eqref{eq:EWsubcritical}
in \cite{CSZ17b} exploited the \emph{Fourth Moment Theorem},
by analysing each term in the \emph{polynomial chaos expansion}
 of $Z_{N,\beta_N}^\omega(\varphi)$
(see Subsection~\ref{sec:poly}) 
and checking that second and fourth moments match the ones of a Gaussian.

Both the approaches in \cite{CC22,CSZ17b} require that
the main contribution to the variance comes from \emph{chaos of bounded order},
i.e.\ the tail of the chaos expansion must be small in~$L^2$
(c.f.\ hypotesis (d) in \cite[Theorem~6.3.1]{NP12} for the Fourth Moment Theorem).
This holds in the sub-critical regime $\hat\beta < 1$ but,
crucially, \emph{it fails in the quasi-critical regime \eqref{eq:quasicrit}
that we consider,
where each fixed order chaos has variance converging to zero}.
The tail of the chaos expansion thus gives the \emph{main} contribution 
to the variance in the quasi-critical regime,
which is one of the main technical challenges we face in this paper.

\smallskip

In our proof of Theorem~\ref{th:EWquasicrit}, we will need to bound moments of 
the partition function $Z_{N,\beta_N}^\omega(\varphi)$ of order higher than two
(to verify a Lyapunov condition). In the sub-critical regime,
such bounds can be obtained exploiting the hypercontractivity of polynomial chaos expansions,
as in \cite{CC22}. However, this property fails in the quasi-critical regime \eqref{eq:quasicrit}
for the same reason pointed out above, namely the tail of the chaos expansion is non negligible.

For this reason, we derive \emph{novel quantitative estimates}
on high moments of the partition function, see Sections~\ref{sec:moment3}
and~\ref{sec:4th-3}, extending the strategy developed in \cite{GQT21,CSZ23, LZ21+}.
We believe that these estimates will find several applications in future research.

\begin{remark}
An alternative approach to bounding moments of the partition function
was developed in \cite{CZ23} based on estimating the collision local time of multiple
independent random walks. This approach yields estimates on
\emph{very high moments}, whose order diverges as $N\to\infty$,
but they are restricted to (a strict subset of) the sub-critical
regime $\hat\beta < 1$, hence they do not cover the quasi-critical regime that we consider.
We also point out the recent paper \cite{LZ24+}, where bounds on very high moments are obtained
in the critical regime \eqref{eq:EWcritical}.
\end{remark}

Let us finally comment on the scaling factor $\sqrt{\theta_N}$ in our main result \eqref{eq:main}.
This can be determined by a variance computation: we show in 
Proposition~\ref{prop:approximationZbysum}, see \eqref{eq:approxbysum0-1}, that as $N\to\infty$
\begin{equation} \label{eq:var-comp}
	\bbvar\big[ Z_{N,\beta_N}^\omega(\varphi) \big] \sim \frac{v_\varphi^2}{\theta_N} \,,
\end{equation}
with $v_\varphi$ as in \eqref{eq:sigmaphi}. We can explain heuristically
the scaling in \eqref{eq:var-comp} as follows.
Due to the averaging on the diffusive scale $\sqrt{N}$ determined by
$\varphi_N(\cdot)$ in \eqref{eq:ZNav}, the
variance of $Z_{N,\beta_N}^\omega(\varphi)$ is essentially determined by $\bbcov[Z_{N,\beta_N}^\omega(x), Z_{N,\beta_N}^\omega(y)]$ for $|x-y| \approx \sqrt{N}$.
Such a covariance is approximately given by the product of \emph{three factors}
(see \eqref{eq:ub+} below):
\begin{itemize}
\item \emph{the expected number of times two independent random walks meet before time~$N$}
starting from $x$ and $y$ (see the term in brackets in \eqref{eq:ub+}), which is of order~$1$;
\item \emph{the factor $\sigma_{\beta_N}^2 \sim 1/\log N$} arising from the variance of
$\rme^{\beta \omega - \lambda(\beta)}$, see \eqref{eq:sigma};
\item \emph{the second moment of the partition function $Z_{N,\beta_N}^\omega(z)$ 
from  a single point} $(1-\sigma_{\beta_N}^2R_N)^{-1}$
(see the last fraction in \eqref{eq:ub+}), which is of order $\log_N/\theta_N$, see \eqref{eq:quasicrit}.
\end{itemize}
Combining these factors, we obtain $\bbvar[ Z_{N,\beta_N}^\omega(\varphi) ]
\approx 1/\theta_N$ in agreement with \eqref{eq:var-comp}.

\subsection{Relevant context and future perspectives}
\label{sec:SPDEs}

The Gaussian fluctuations for  $Z^\omega_{N,\beta}(\varphi)$
in Theorem~\ref{th:EWquasicrit}
are closely connected to a stochastic PDE, 
the \emph{Edwards-Wilkinson equation},
also known as Stochastic Heat Equation with \emph{additive} noise:
\begin{equation}
	\label{eq:EWeq}
	\partial_t v^{(\mathsf{s},\mathsf{c})}(t,x) \, = \, \frac{\mathsf{s}}{2}\, \Delta_xv^{(\mathsf{s},\mathsf{c})}(t,x) 
	\, + \, \mathsf{c} \, \dot{W}(t,x)\,,
\end{equation}
where $\mathsf{s}, \mathsf{c} >0$ are fixed parameters and
$\dot{W}(t,x)$ is space-time white noise.
This equation is well-posed in any spatial dimension $d \ge 1$:
its solution is the Gaussian process
\begin{equation*}
	v^{(\mathsf{s},\mathsf{c})}(t,x) \, = \, v^{(\mathsf{s},\mathsf{c})}(0,x) \,+\,
	\mathsf{c} \, \int_{0}^t \int_{\R^d} g_{\mathsf{s}(t-u)}(x-z) \, \dot{W}(u,z) \, \dd u \, \dd z\,, 
\end{equation*}
where $g_t(x) := (2 \pi t)^{-d/2} \, \rme^{-\frac{|x|^2}{2t}}$ is the heat kernel on $\R^d$.
It is known that $x \mapsto v^{(\mathsf{s},\mathsf{c})}(t,x)$ is a (random) function only for $d=1$,
while for $d \ge 2$ it is a genuine distribution.

Henceforth we focus on $d=2$.
The solution $v^{(\mathsf{s},\mathsf{c})}(t,\cdot)$ 
with initial condition $v^{(\mathsf{s},\mathsf{c})}(0,\cdot) \equiv 0$,
averaged on test functions $\varphi \in C_c(\R^2)$,
is the centered Gaussian process with covariance
\begin{equation*}
	\bbE \big[ v^{(\mathsf{s},\mathsf{c})} (t,\varphi)  \, v^{(\mathsf{s},\mathsf{c})}(t, \psi) \big] 
	= \int_{\R^2\times\R^2} \varphi(x) \, K^{(\mathsf{s},\mathsf{c})}_{t}(x,y) \, \psi(y) \, 
	\dd x\, \dd y \,,
\end{equation*}
where we set
\begin{equation}\label{eq:covEW}
	K^{(\mathsf{s},\mathsf{c})}_{t}(x,y) := \mathsf{c}^2 \int_0^t g_{2\mathsf{s}u}(x-y) \, \dd u =
	\frac{\mathsf{c}^2}{2\mathsf{s}} \int_{0}^{2\mathsf{s}t} 
	\frac{1}{2 \pi u} \rme^{-\frac{|x-y|^2}{2u}} \, \dd u\,.
\end{equation} 
Comparing with \eqref{eq:sigmaphi}, we can rephrase our main result 
\eqref{eq:main}: 
for any $\varphi \in C_c(\R^2)$
\begin{equation} \label{eq:EW}
	\sqrt{\theta_N} \,
	\big\{ Z_{N,\beta_N}^\omega(\varphi) - \bbE[Z_{N,\beta_N}^\omega(\varphi)] \big\}
	\xrightarrow[N\to \infty]{d} \, 
	v^{(\mathsf{s},\mathsf{c})}(1,\varphi)
	\qquad \text{with }
	\begin{cases}
	\mathsf{s} =\frac{1}{2} \,,\\
	\mathsf{c} = \sqrt{\pi} \,.
	\end{cases}
\end{equation}
In other term, \emph{the diffusively rescaled partition functions in the quasi-critical regime
converge, after centering and rescaling, to the solution
of the Edwards-Wilkinson equation}.

\begin{remark}\label{rem:EWsubcritical}
Also relation \eqref{eq:EWsubcritical},
in the sub-critical regime $\hat\beta \in (0,1)$, can be rephrased as a convergence
to the Edwards-Wilkinson solution $v^{(\mathsf{s},\hat{\mathsf{c}})}(1,\varphi)$
with $\hat{\mathsf{c}} = \sqrt{\pi} \, \hat{\beta}/\sqrt{1-\hat{\beta}^2}$.
\end{remark}

The reason why stochastic PDEs emerge naturally in the study of directed polymers is that,
by the Markov property of simple random walk, the diffusively rescaled 
partition function $u_N(t,x) := Z_{\lfloor Nt \rfloor,\beta}^\omega(\lfloor \sqrt{N} x \rfloor)$ solves 
(up to a time reversal) a \emph{discretized version} of the Stochastic Heat Equation 
with \emph{multiplicative noise}:
\begin{equation}\label{eq:SHEm}
	\partial_t u(t,x) \, = \, \frac{1}{2} \Delta_x u(t,x) \, + \, \beta\, \dot{W}(t,x) \,u(t,x)\,,
\end{equation}
with initial condition $u(0,x)=1$.
This gives a hint how the Edwards-Wilkinson equation \eqref{eq:EWeq}
may arise in the scaling limit of directed polymer partition functions: intuitively,
the singular product $\dot{W}(t,x) \,u(t,x)$ in \eqref{eq:SHEm}
for $u(t,x) = u_N(t,x)$ converges to an independent white noise as $N\to\infty$
(see \cite[Theorem~3.4]{CC22} in the sub-critical regime).

Edwards-Wilkinson fluctuations were recently proved also
for a \emph{non-linear} Stochastic Heat Equation,
see \cite{DG22,T22+}, always in the sub-critical regime. It would be interesting to extend
these results in the quasi-critical regime, generalizing our Theorem~\ref{th:EWquasicrit}.

\begin{remark}
The multiplicative Stochastic Heat Equation \eqref{eq:SHEm} 
in the continuum is well-posed
in one space dimension $d=1$, e.g.\ by classical Ito-Walsh stochastic integration, 
but \emph{it is ill-defined in higher dimensions $d \ge 2$}. 
For this reason, directed polymer partition functions can
provide precious insight on the equation \eqref{eq:SHEm}. In particular, for $d=2$,
their scaling limit in the critical regime was obtained in \cite{CSZ23}
and called the critical 2d Stochastic Heat Flow, see \eqref{eq:EWcritical}, 
as a natural
candidate for the ill-defined solution of \eqref{eq:SHEm}.
\end{remark}

In the same spirit, the log-partition function 
$h_N(t,x) := \log Z^\omega_{\lfloor Nt \rfloor,\beta}(\lfloor \sqrt{N} x \rfloor)$ provides a
discretized approximation for the \emph{Kardar-Parisi-Zhang (KPZ) equation} \cite{KPZ86}:
\begin{equation*}
	\partial_th(t,x) \, = \, \frac{1}{2} \Delta_x h(t,x) \, + \, \frac{1}{2} | \nabla h(t,x)|^2 \, 
	+ \, \beta \, \dot{W}(t,x) \,,
\end{equation*}
with initial condition $h(0,x)=0$.
This equation too, in the continuum,
is only fully understood in one space-dimension $d=1$,
via recent breakthrough techniques of regularity structures \cite{H14}
or paracontrolled distributions \cite{GIP15,GP17};
see also \cite{GJ14,K16}.
Similar to \eqref{eq:EWsubcritical},
Edwards-Wilkinson fluctuations
have been proved for $h_N(t,x)$  in the entire sub-critical regime \eqref{eq:hatbeta}
with $\hat{\beta} \in (0,1)$ \cite{CSZ20,G20,CD20}: for $\varphi \in C_c(\R^2)$
\begin{equation}\label{eq:EWsublogZ}
	\sqrt{\log N}\, \big\{ \log Z_{N,\beta_N}^\omega(\varphi) - 
	\bbE[\log Z_{N,\beta_N}^\omega(\varphi)] \big\}
	\xrightarrow[N\to \infty]{d} \, v^{(\mathsf{s},\hat{\mathsf{c}})}(1,\varphi) \,,
\end{equation} 
with $\mathsf{s}$, $\hat\sfc$ as in Remark~\ref{rem:EWsubcritical}.
This  was recently extended in \cite{NN23}, which focuses on a mollification
(rather than discretization) of
the Stochastic Heat Equation \eqref{eq:SHEm}: phrased
in our setting, the results of \cite{NN23} prove
Gaussian fluctuations in the sub-critical regime
for general transformations $F(Z_{N,\beta_N}^\omega)$,
besides $F(z) = \log z$, 
with general initial conditions.

\smallskip

It would be very interesting to extend \eqref{eq:EWsublogZ} to the quasi-critical regime
\eqref{eq:quasicrit},
namely to prove an analogue of our Theorem~\ref{eq:EWsublogZ} for 
$\log Z_{N,\beta_N}^\omega(\varphi)$,
which we expect to hold. A natural strategy would be to
generalize the linearization procedure established in \cite{CSZ20} 
to handle the logarithm. This requires estimating \emph{negative moments}
of the partition function, which is
a challenge in the quasi-critical regime (since $Z_{N,\beta_N}^\omega(z) \to 0$
for fixed~$z\in\Z^2$).

\smallskip

Local averages on \emph{sub-diffusive scales}
have also been investigated for the mollified KPZ solution in the sub-critical regime, see \cite{C23,T23+}.
Similar results can be expected for the mollified solution of the Stochastic Heat Equation
\eqref{eq:SHEm}, or for the directed polymer partition function, which
should be obtainable in the sub-critical regime 
as in \cite{CSZ17b}.
It would be natural to study
such local averages also in the quasi-critical regime.

\smallskip

We finally mention that
Edwards-Wilkinson fluctuations like \eqref{eq:EWsubcritical} 
and \eqref{eq:EWsublogZ} have also been obtained in higher dimensions $d \ge 3$,
in the so-called \emph{$L^2$-weak disorder phase}
where the partition function has bounded second moment
\cite{CN21, LZ22, CNN22,CCM21+}, see also the previous works 
\cite{MU18, GRZ18, CCM20, DGRZ20}. 
Unlike the two-dimensional setting, for $d \ge 3$ the partition
function admits a non-zero limit also \emph{beyond the $L^2$-weak disorder phase}:
see \cite{J22, J22+} for recent results in this challenging regime.
It would be natural to investigate whether our approach can also be applied
in higher dimensions $d \ge 3$, in order to prove Gaussian fluctuations
\emph{slightly beyond} the $L^2$-weak disorder phase.

\subsection{Organization of the paper}

The paper is structured as follows.
\begin{itemize}
\item In Section~\ref{sec:proofmainth} we present the structure of the proof of 
Theorem~\ref{th:EWquasicrit} based on two key
steps, formulated as Propositions~\ref{prop:approximationZbysum}
and~\ref{prop:stimamomento}, and we prove Theorem~\ref{th:EW-SHF}.

\item In Section~\ref{sec:moment2} we prove 
Proposition~\ref{prop:approximationZbysum}.

\item In Section~\ref{sec:moment3} we derive upper bounds on the moments
of the partition functions.

\item In Section~\ref{sec:4th-3} we prove Proposition~\ref{prop:stimamomento}.

\item Finally, some technical points are deferred to Appendix~\ref{sec:4thapp}.
\end{itemize}

\subsection*{Acknowledgements}
We gratefully acknowledge the support of INdAM/GNAMPA.

\section{Proof of Theorems~\ref{th:EWquasicrit}
and~\ref{th:EW-SHF}}\label{sec:proofmainth}

Let us call $X_N$ the LHS of \eqref{eq:main}: recalling \eqref{eq:ZNav} and \eqref{eq:Zmean}, we can write
\begin{equation}\label{eq:X}
\begin{split}
	X_N &:= \sqrt{\theta_N} \,
	\big\{ Z_{N,\beta_N}^\omega(\varphi) - \bbE[Z_{N,\beta_N}^\omega(\varphi)] \big\} \\
	&= \frac{\sqrt{\theta_N} }{N} \sum_{z \in \Z^2} 
	\big\{ Z_{N,\beta_N}^{\omega}(z) - 1 \big\}  \, \varphi_N(z) \,,
\end{split}
\end{equation}
with $\varphi_N$ as in \eqref{eq:phiN}.
In this section, we prove Theorem~\ref{th:EWquasicrit} via the following two main steps:
\begin{enumerate}
	\item we first approximate $X_N$ in $L^2$
	by a sum $\sum_{i=1}^M X_{N,M}^{(i)}$ of
	\emph{independent} random variables,
	for $M=M_N \to \infty$ slowly enough;
	\item we then show that the random variables $(X_{N,M}^{(i)})_{1 \le i \le M}$ 
	for $M=M_N$
	satisfy the assumptions of the classical \emph{Central Limit Theorem} for triangular arrays.
\end{enumerate}

\subsection{First step}

In order to define the random variables $X_{N,M}^{(i)}$, 
for $M\in\N$ and $1 \le i \le M$,
we introduce a variation
of \eqref{eq:Z}, for $-\infty < A < B < \infty$:
\begin{equation} \label{eq:ZAB}
	Z_{(A,B],\beta}^{\omega}(z):= \E \big[ \rme^{\sum_{n \in (A,B] \cap \N}
	\{ \beta  \omega(n,S_n)-\lambda(\beta)\}} \big| \, S_0 = z\big]\,.
\end{equation}
We then define $X_{N,M}^{(i)}$ 
replacing $Z_{N,\beta}^{\omega}$ by $Z_{(\frac{i-1}{M}N,\frac{i}{M}N],\beta}^{\omega}$
in the definition \eqref{eq:X} of $X_N$:
\begin{equation}\label{eq:Xi}
\begin{split}
	X_{N,M}^{(i)} 
	&= \frac{\sqrt{\theta_N} }{N} \sum_{z \in \Z^2} 
	\big\{ Z_{(\frac{i-1}{M}N,\frac{i}{M}N],\beta_N}^{\omega}(z) - 1 \big\}  \, 
	\varphi_N (z) \,.
\end{split}
\end{equation}
Note that $Z_{(A,B],\beta}^{\omega}(z)$ only depends on $\omega(n,x)$ for $A < n \le B$,
moreover $\bbE[Z_{(A,B],\beta}^{\omega}(z)]=1$. 
As a consequence,
$X_{N,M}^{(i)}$
for $1 \le i \le M$ are \emph{independent} and \emph{centered} random variables.

\smallskip

The core of this first step is the following approximation result, proved in Section~\ref{sec:moment2}.

\begin{proposition}[$L^2$ approximation]\label{prop:approximationZbysum}
For $(\beta_N)_{N\in\N}$ in the quasi-critical regime, see \eqref{eq:sigma} and \eqref{eq:quasicrit},
the following relations hold for any $\varphi \in C_c(\R^2)$, 
with $v_{\varphi}$ as in \eqref{eq:sigmaphi}:
\begin{gather}\label{eq:approxbysum0-1}
	\lim_{N\to\infty} \, \bbE\big[ X_N^2 \big] = v_{\varphi}\,, \\ 
	\label{eq:approxbysum0-2}
	\forall M\in\N: \quad \lim_{N\to \infty} \, \bigg\| \, X_N \, - \, 
	\sum_{i=1}^{M} X_{N,M}^{(i)} \, \bigg\|_{L^2} = 0 \,.
\end{gather}
\end{proposition}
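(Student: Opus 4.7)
My plan is to derive both statements from a single explicit covariance formula for truncated partition functions, obtained via the polynomial chaos expansion. Set $\zeta(n,x):=\rme^{\beta_N\omega(n,x)-\lambda(\beta_N)}-1$, so that $(\zeta(n,x))_{n,x}$ are i.i.d.\ centered random variables with variance $\sigma_{\beta_N}^2$, and expand $\prod_{n\in(A,B]\cap\N}(1+\zeta(n,S_n))$ inside the expectation defining $Z_{(A,B],\beta_N}^\omega(z)$ in~\eqref{eq:ZAB}. Combining $L^2$-orthogonality of distinct chaos monomials with the convolution identity $\sum_{x\in\Z^2}p_n(x-z)p_n(x-z')=p_{2n}(z-z')$ and the Markov property of the walk, I obtain the \emph{meeting decomposition}
\begin{equation}\label{eq:plan-cov}
	\bbcov\!\bigl[Z_{(A,B],\beta_N}^\omega(z),\,Z_{(A,B],\beta_N}^\omega(z')\bigr]
	= \sigma_{\beta_N}^2\!\!\sum_{n=A+1}^{B}\!\!p_{2n}(z-z')\,\bbE\!\bigl[Z_{B-n,\beta_N}^\omega(0)^2\bigr].
\end{equation}
The three factors have a clean probabilistic meaning: $\sigma_{\beta_N}^2\,p_{2n}(z-z')$ encodes the first meeting of two independent walks launched from $z$ and $z'$, and by restart the subsequent meetings contribute the second moment of a fresh partition function of length $B-n$.

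For~\eqref{eq:approxbysum0-1}, I apply~\eqref{eq:plan-cov} with $(A,B)=(0,N)$ to the double sum defining $\bbvar[Z_{N,\beta_N}^\omega(\varphi)]$ and insert three asymptotic inputs: $\sigma_{\beta_N}^2\sim \pi/\log N$ (from~\eqref{eq:quasicrit} and~\eqref{eq:RN}); the local CLT $p_{2n}(w)\sim \tfrac{1}{\pi n}\rme^{-|w|^2/(2n)}$, which is non-zero only on the even-parity sub-lattice and so contributes an overall factor $\tfrac12$ in any diffusive Riemann-sum approximation; and the second-moment asymptotic $\bbE[Z_m^\omega(0)^2]\sim (1-\sigma_{\beta_N}^2 R_m)^{-1}\sim \log N/\theta_N$ for $m=\Theta(N)$. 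Rescaling $z=\sqrt{N}x$, $z'=\sqrt{N}x'$, $n=uN$, the product of the three factors collapses at leading order to $\theta_N^{-1}\,E_1(|x-x'|^2/2)$, where $E_1(a)=\int_0^1 u^{-1}\rme^{-a/u}\,\mathrm{d}u$ is the exponential integral, and pairing with the parity factor $\tfrac12$ and with $\varphi\otimes\varphi$ yields precisely $v_\varphi$ as defined in~\eqref{eq:sigmaphi}.

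For~\eqref{eq:approxbysum0-2}, the key structural observation is that $X_{N,M}^{(1)},\ldots,X_{N,M}^{(M)}$ depend on disjoint families of disorder variables, hence are independent and centered; moreover, their chaos monomials (all with time-indices contained in a single sub-interval $(A_i,B_i]$, where $A_i=(i-1)N/M$ and $B_i=iN/M$) are disjoint from the ``mixed'' chaos monomials constituting $\Delta_N := X_N - \sum_{i=1}^{M} X_{N,M}^{(i)}$ (those whose times $n_1<\ldots<n_k$ span at least two sub-intervals). By $L^2$-orthogonality this yields the Pythagorean identity $\|\Delta_N\|_{L^2}^2 = \bbvar[X_N] - \sum_{i=1}^M \bbvar[X_{N,M}^{(i)}]$. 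Applying~\eqref{eq:plan-cov} with $(A,B)=(A_i,B_i)$ produces the same continuum Riemann sum as above but restricted to $u\in(\tfrac{i-1}{M},\tfrac{i}{M})$; since $R_{N/M}=R_N+O(1)$ and $\theta_N\to\infty$, the second-moment factor $\bbE[Z_{B_i-n,\beta_N}^\omega(0)^2]$ remains $\sim \log N/\theta_N$ uniformly in $i$. Summing over $i$ reassembles the integral $\int_0^1\mathrm{d}u$, so $\sum_{i=1}^M\bbvar[X_{N,M}^{(i)}]\to v_\varphi$ as well, and combined with~\eqref{eq:approxbysum0-1} this gives $\|\Delta_N\|_{L^2}\to 0$.

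The main technical obstacle is promoting these heuristic Riemann-sum computations to rigorous limits. Two points require care: (a) the sharp asymptotic $\bbE[Z_m^\omega(0)^2]\sim \log N/\theta_N$ only holds for $m=\Theta(N)$, so boundary contributions where $B-n$ becomes small need matching \emph{uniform} upper bounds on the truncated second moment of the correct order; (b) the limiting kernel $K(x,x')=\tfrac12 E_1(|x-x'|^2/2)$ has a logarithmic singularity at $x=x'$, integrable against $\varphi\otimes\varphi$ for $\varphi\in C_c(\R^2)$, but exchanging limit and integral requires uniform domination of the pre-limit covariance. Both issues are addressed via the uniform second-moment bounds developed in Section~\ref{sec:moment3}, combined with dominated convergence; uniformity in the sub-interval index $i$ is automatic from translation invariance.
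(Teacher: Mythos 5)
Your proposal is correct and follows essentially the same route as the paper: the Pythagorean identity from chaos orthogonality reduces everything to second-moment asymptotics, and your ``meeting decomposition'' \eqref{eq:plan-cov} is just an exact resummation of the same series the paper writes down (its \eqref{eq:secmomforlb}), after which the local limit theorem Riemann sum with the three factors $\sigma_{\beta_N}^2\sim\pi/\log N$, $q_{2n}$, and $(1-\sigma_{\beta_N}^2R)^{-1}\sim\log N/\theta_N$ gives $v_\varphi$ exactly as in the paper. The one input you assert rather than prove, the matching \emph{lower} bound $\bbE[Z^\omega_{m,\beta_N}(0)^2]\ge(1-o(1))\log N/\theta_N$ for $m=\Theta(N)$ (the upper bound being the trivial geometric series, not the heavy machinery of Section~\ref{sec:moment3}), is precisely what the paper supplies via its renewal-variable/Markov-inequality argument with the $T_i^{(N)}$, and that short argument would complete your plan.
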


By general arguments, see \cite[Remark~4.2]{CC22},
relation \eqref{eq:approxbysum0-2} still holds if
$M \to \infty$ slowly enough as $N\to\infty$. More precisely, 
there exists a sequence $\overline{M_N} \to \infty$
such that
	\begin{equation}
		\label{eq:approxbysum}
		\lim_{N\to \infty} \, \bigg\| \, X_N \, - \, 
		\sum_{i=1}^{M_N} X_{N,M_N}^{(i)} \, \bigg\|_{L^2} = 0 
		\qquad \text{for any $M_N \le \overline{M_N}$.}
	\end{equation}

{\footnotesize
\begin{proof}[Proof of \eqref{eq:approxbysum}]
If we set $\alpha_{\bar{M},N} := \max_{M \le \bar{M}} \| X_N - \sum_{i=1}^M X^{(i)}_{N,M}\|_{L^2}$,
it follows by \eqref{eq:approxbysum0-2} that for any $\bar{M} \in \N$ we have
$\lim_{N\to\infty} \alpha_{\bar{M},N} = 0$, 
hence we can find $\widehat{N}_{\bar{M}} \in \N$ such that 
$\alpha_{\bar{M},N} \le 1/\bar{M}$ (say)
for $N \ge \widehat{N}_{\bar{M}}$,
and we can take $\bar{M} \mapsto \widehat{N}_{\bar{M}}$ increasing. 
Given $N\in\N$, we call  $\overline{M_N}$
the largest $\bar{M}\in\N$ for which $N \ge \widehat{N}_{\bar{M}}$, 
that is $\overline{M_N}:= \max\{\bar{M} \in \N \colon \widehat{N}_{\bar{M}} \le N\}$. 
This ensures that 
$\alpha_{\overline{M_N},N} \le 1/\overline{M_N}$, hence $\alpha_{\overline{M_N},N} \to 0$ as
$N\to\infty$ because $\overline{M_N} \to \infty$. The
definition of $\alpha_{\bar{M},N}$ then directly implies \eqref{eq:approxbysum}.
\end{proof}
}

\smallskip

Relation \eqref{eq:approxbysum} shows that we can approximate $X_N$ in $L^2$ by a sum 
of independent and centered random variables.
We then obtain, by \eqref{eq:approxbysum0-1},
\begin{equation}\label{variance}
 	\lim_{N\to \infty}  \bbE \Bigg[
	\Bigg( \sum_{i=1}^{M_N} X_{N,M_N}^{(i)} \Bigg)^2 \Bigg] 
	= \lim_{N\to \infty}  
	\sum_{i=1}^{M_N} \bbE \Big[ \big(X_{N,M_N}^{(i)}\big)^2 \Big]
	= v_{\varphi} \,.
\end{equation}

	\begin{remark}\label{rem:CD}
		A decomposition of the partition function
		is employed in the recent paper
		\cite{CD} to give an alternative proof of the asymptotic log-normality 
		of the partition function in the sub-critical regime.
		In our decomposition \eqref{eq:approxbysum0-2}, 
		each individual piece
		$X_{N,M}^{(i)}$ for $i=1,\ldots, M$ contributes on the order of $\frac{1}{M}$ 
		to the total limiting variance $v_\varphi$ (see Lemma~\ref{lem:limitsecmomZ(i)}).
		The same holds for the decomposition in \cite{CD}.
		
		There are, however, key differences: 
		the decomposition in \cite{CD} is \emph{multiplicative}
		whereas ours is \emph{additive}, as seen in \eqref{eq:approxbysum0-2};
		additionally, the decomposition in \cite{CD} is based on the \emph{exponential time scale}
		$N^{\frac{i}{M}}$, while
		ours is defined on the \emph{linear time scale} $\frac{i}{M} N$, 
		reflecting the different limits that are obtained (log-normal vs.\ normal).
		
		We also point out that analogous decompositions ---both in linear and exponential time scales--- 
		had already been used in \cite{CC22}.
	\end{remark}

\subsection{Second step}

Recalling \eqref{eq:X}, we can rephrase our goal \eqref{eq:main} as
$X_N \overset{d}{\to} \cN(0,v_{\varphi})$. In view of \eqref{eq:approxbysum},
this follows if we prove the convergence in distribution
\begin{equation} \label{eq:CLT}
	\sum_{i=1}^{M_N} X_{N,M_N}^{(i)}
	\xrightarrow[N\to \infty]{d} \, \mathcal{N} \big( 0 \,, v_{\varphi}\big)\,.
\end{equation}
Since $(X_{N,M_N}^{(i)})_{1\le i \le M_N}$ are
independent and centered, we apply the classical Central Limit Theorem 
for triangular arrays, see e.g.\ \cite[Theorem 27.3]{Bil95}: 
since we have convergence of the variance by \eqref{variance}, it is enough to check
the Lyapunov condition
\begin{equation}\label{lyapunov}
    \text{for some } p> 2: \qquad 
    \lim_{N\to \infty} \sum_{i=1}^{M_N} \mathbb E \Big[ \big| X_{N, M_N}^{(i)} \big|^p \Big] = 0 \,.
\end{equation}
This follows from the next result, proved in Section~\ref{sec:moment3}, where we focus on the case $p=4$.

\begin{proposition}[Fourth moment bound]\label{prop:stimamomento}
For $(\beta_N)_{N\in\N}$ in the quasi-critical regime, see \eqref{eq:sigma} and \eqref{eq:quasicrit},
and for any $\varphi \in C_c(\R^2)$, there is a constant $C<\infty$ such that
\begin{equation}\label{eq:stimamomento}
	\forall M\in\N \,, \ \ \forall 1\le i\le M: \qquad
	\limsup_{N\to\infty} \bbE \Big[ \big(  X_{N, M}^{(i)} \big)^4 \Big] \le \frac{C}{M^2}  \,.
\end{equation}
\end{proposition}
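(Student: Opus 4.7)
The plan is to reduce, via translation invariance of the environment $\omega$, to the case $i=1$: the random variable $X_{N,M}^{(i)}$ has the same distribution as the analogous quantity built from $Z_{(0,N/M],\beta_N}^{\omega}$. Writing $\tilde{Z}(z) := Z_{(0,N/M],\beta_N}^{\omega}(z) - 1$, a direct expansion gives
\begin{equation*}
\bbE\bigl[(X_{N,M}^{(1)})^4\bigr] = \frac{\theta_N^{2}}{N^{4}} \sum_{z_1,\ldots,z_4\in\Z^2} \varphi_N(z_1)\varphi_N(z_2)\varphi_N(z_3)\varphi_N(z_4) \, \bbE\!\left[\tilde{Z}(z_1)\tilde{Z}(z_2)\tilde{Z}(z_3)\tilde{Z}(z_4)\right],
\end{equation*}
so the task reduces to bounding the right-hand side by $C/M^2$ asymptotically as $N\to\infty$, with a constant $C$ independent of $M$. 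A useful preliminary observation is that $Z_{(0,N/M],\beta_N}^{\omega}$ is itself quasi-critical, with effective parameter $\theta_N + \log M + O(1)$ in place of $\theta_N$: since $M$ is fixed while $\theta_N\to\infty$, this still fits~\eqref{eq:quasicrit}, and consequently the moment estimates derived in Section~\ref{sec:moment3} for the full partition function transfer with minor modifications to the truncated one.

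The core strategy is to decompose the four-point correlation function into a Gaussian (pairing) contribution plus a connected remainder. Expanding each $\tilde{Z}(z_k)$ in its polynomial chaos with respect to the centered i.i.d.\ variables $\xi(n,x) := \rme^{\beta_N \omega(n,x) - \lambda(\beta_N)} - 1$, the four-fold expectation becomes a sum over \emph{matchings} of the space-time points $(n,x)$ appearing across the four expansions, since independence forces each such point to occur with even multiplicity. The dominant contribution comes from matchings that split $\{1,2,3,4\}$ into two pairs with matched points shared only within each pair. Using that $\bbvar[X_{N,M}^{(1)}]\to v_\varphi/M$ (which follows from Proposition~\ref{prop:approximationZbysum} combined with the independence and equidistribution of the pieces $X_{N,M}^{(i)}$), summing over the three pairings yields the Gaussian fourth moment
\begin{equation*}
3\,\bigl(\bbvar[X_{N,M}^{(1)}]\bigr)^{2} \; \longrightarrow \; \frac{3\, v_{\varphi}^{2}}{M^{2}} \qquad \text{as } N\to\infty,
\end{equation*}
which already matches the desired bound. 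The remaining, \emph{non-pairing} contributions---matchings connected across three or four of the indices, or with internal coincidences inside a single $\tilde{Z}(z_k)$---produce either higher-order disorder cumulants $\bbE[\xi^{r}]$ with $r\ge 3$ or fewer independent chaos summations; combining the bounds from Section~\ref{sec:moment3} with careful diagrammatic book-keeping, these terms should be shown to be of smaller order as $N\to\infty$.

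The main obstacle, as advertised in the introduction, is precisely the lack of hypercontractivity in the quasi-critical regime: each fixed-order chaos has vanishing variance, so one cannot truncate the chaos expansion at bounded order and then invoke Gaussian-like bounds as in \cite{CC22,CSZ17b}; on the contrary, the dominant contribution to the fourth moment comes from the tail of the expansion. Controlling this tail is the role of the novel quantitative moment estimates developed in Section~\ref{sec:moment3}, which extend the strategy of \cite{GQT21,CSZ23,LZ21+} to the present setting. The hardest step will therefore be to verify that the sum of all non-pairing contributions---coming from arbitrarily high chaos orders and arbitrary matching structures---can indeed be absorbed into $O(1/M^2)$ uniformly in $N$; this diagrammatic analysis is carried out in Section~\ref{sec:4th-3} with the moment bounds of Section~\ref{sec:moment3} as the principal technical input, thus yielding~\eqref{eq:stimamomento}.
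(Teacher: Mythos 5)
Your proposal has the right overall shape (the fourth moment should be dominated by a ``two disjoint pairings'' contribution of order $1/M^2$, with connected/higher-cumulant diagrams negligible), but as written it has both incorrect steps and a genuine gap where the actual work lies.

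First, the reduction to $i=1$ by translation invariance is false as stated. Since $Z^\omega_{(A,B],\beta}(z)$ is defined with $S_0=z$ and disorder only in the window $(A,B]$, time-translating the window changes the spatial averaging: one has $X_{N,M}^{(i)} \overset{d}{=} \frac{\sqrt{\theta_N}}{N}\sum_w \{Z^\omega_{(0,N/M],\beta_N}(w)-1\}\, q^{\varphi_N}_{\frac{i-1}{M}N}(w)$, i.e.\ the test function $\varphi_N$ must be replaced by its random-walk evolution $q^{\varphi_N}_{\frac{i-1}{M}N}$ (this is exactly Remark~\ref{rem:backtous}), and all subsequent estimates must be uniform in this $i$-dependent boundary datum. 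Relatedly, the pieces $X_{N,M}^{(i)}$ are \emph{not} equidistributed in $i$ (their limiting variances are $v_{\varphi,(\frac{i-1}{M},\frac{i}{M}]}$, which depend on $i$, see \eqref{eq:limsecmomZ(i)}), and independence across $i$ is irrelevant to the fourth moment of a single piece; so the step ``the three pairings give $3(\bbvar[X^{(1)}_{N,M}])^2\to 3v_\varphi^2/M^2$'' is asserted rather than derived, and the pairing contribution is in any case only approximately a product of covariances (quadruple coincidences must be subtracted and controlled).

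Second, and more seriously, the entire control of the non-pairing contributions — which you yourself identify as the hardest step — is deferred to ``careful diagrammatic book-keeping'' and to the paper's Sections~\ref{sec:moment3} and~\ref{sec:4th-3}, i.e.\ to the very proof you are asked to supply; no mechanism is given that would actually produce the bound. In particular, nothing in your sketch explains where the factor $1/M^2$ (rather than $1/M$) comes from. In the paper this is the crux: after the operator bound \eqref{eq:MNh6}--\eqref{eq:Xibulk} with weight $\cW_t$, $t=1/\sqrt N$, the product of boundary terms is only $\lesssim N^4/M$ (see \eqref{eq:stimaperLB}--\eqref{eq:mezza}), which yields $C/M$ and is insufficient; the missing factor $1/M$ is recovered only for the dominant term $\Xi_{\pairs}(2)$ (two disjoint pair partitions, which by full support is the only way two pairs can occur) by inserting the \emph{restricted} two-body weight $\cV^{I}_s$ with $s=1/\sqrt{L}=\sqrt{M/N}$ on both boundary terms, via Propositions~\ref{prop:left2}, \ref{prop:right}, \ref{prop:bulk-rw} and~\ref{prop:bulk-interacting}, each boundary gaining a factor $1/\sqrt M$ (compare \eqref{eq:stimaperLB+}--\eqref{eq:stimaperRB+} with \eqref{eq:stimaperLB}--\eqref{eq:stimaperRB}), while the remaining diagrams ($r=1$, $r\ge 3$, and non-pair partitions at $r=2$) are killed by the factors $\theta_N^{-1}$ or $(\log N)^{-1/2}$ coming from \eqref{eq:est3-us} and Proposition~\ref{lem:dismom}. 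Without identifying and proving estimates of this type (or an equivalent sharp device), your plan does not yield \eqref{eq:stimamomento}.
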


Since the constant $C$ in  \eqref{eq:stimamomento} does not depend on~$M$,
we can let $M_N \to \infty$ slowly enough and the estimate will still hold if the RHS is doubled, say.
More precisely, there exists a sequence $\overline{M'_N} \to \infty$ such that
\begin{equation} \label{eq:estdouble2}
	\max_{1 \le i \le M_N} \, \bbE \Big[ \big(  X_{N, M_N}^{(i)} \big)^4 \Big] \le \frac{2C}{M_N^2}
	\qquad \text{for any $M_N \le \overline{M'_N}$.}
\end{equation}

{\footnotesize
\begin{proof}[Proof of \eqref{eq:estdouble2}]
If we call $\alpha_{M,N} := \max_{1 \le i \le M} \bbE [ (  X_{N, M}^{(i)} )^4 ]$,
then by \eqref{eq:stimamomento},
for any $M \in \N$, there is $\widehat{N}_M \in \N$ such that
$\alpha_{M,N} \le \frac{2C}{M^2}$ for all $N \ge \widehat{N}_M$. 
We can take $M \mapsto \widehat{N}_M$ increasing,
and setting $\overline{M'_N} := \max\{M \in \N \colon \widehat{N}_M \le N\}$
we see that $M \le \overline{M'_N}$ is the same as $N \ge \widehat{N}_M$,
and $\lim_{N\to\infty} \overline{M'}_N = \infty$.
\end{proof}
}

If we finally take $M_N = \min\{\overline{M_N}, \overline{M'_N}\}$, both estimates
\eqref{eq:approxbysum} and \eqref{eq:estdouble2} hold.
This shows that 
\eqref{lyapunov} holds with $p=4$
(the sum therein is $\le 2C/M_N \to 0$ as $N\to\infty$).

\smallskip

The proof of Theorem~\ref{th:EWquasicrit} is then completed once we 
prove Propositions~\ref{prop:approximationZbysum} and~\ref{prop:stimamomento}.
The next sections are devoted to these tasks.

\subsection{Proof of Theorem~\ref{th:EW-SHF}}
Recalling \eqref{eq:hatbeta}, we define for $\theta \in \R$
and $N\in\N$ the value $\beta_N^{\mathrm{crit}}(\theta)$ such that
\begin{equation*}
	\sigma^2_{\beta_N^{\mathrm{crit}}(\theta)} := \frac{1}{R_N} \big(1 + \frac{\theta}{\log N}\big) \,.
\end{equation*}
Then we can rephrase \eqref{eq:EWcritical} as follows:
\begin{equation}\label{eq:EWcriticalbis}
	\forall \varphi \in C_c(\R^2) \,, \ \forall \theta \in \R: \qquad
	Z_{N,\beta_N^{\mathrm{crit}}(\theta)}^\omega(\varphi) \xrightarrow[N\to \infty]{d} \, 
	\mathscr{Z}^{\theta}(\varphi) \,.
\end{equation}

Let us fix $\varphi \in C_c(\R^2)$
and an artbitrary negative sequence $\theta_k < 0$ such that $\theta_k \to -\infty$.
It is enough to prove \eqref{eq:main-SHF} along $\theta_k$,
that is, for any fixed continuous and bounded $f: \R \to \R$,
\begin{equation}\label{eq:main-SHF2}
	\bbE\Big[ f \Big( \sqrt{|\theta_k|} \,
	\big\{ \mathscr{Z}^{\theta_k}(\varphi) - {\textstyle \int \varphi} \big\}
	\Big) \Big]
	\xrightarrow[k\to\infty]{} \, \bbE\big[ f\big( \mathcal{N} \big( 0 \,, v_{\varphi}\big)
	\big) \big] \,,
\end{equation}
where we have replaced $\bbE[\mathscr{Z}^{\theta}(\varphi)] = \int \varphi$
by properties of the Stochastic Heat Flow, and we also note that
$\bbE[Z_{N,\beta}^\omega(\varphi)] = \sum_{z\in\Z^2} \varphi_N(z) = \int \varphi$ by construction, see 
\eqref{eq:phiN}.

The idea is, for any fixed $k\in\N$, to take $N_k \in \N$ large enough so that,
by \eqref{eq:EWcriticalbis}, we can approximate
$\mathscr{Z}^{\theta_k}(\varphi)$ with $Z_{N_k,\beta_{N_k}^{\mathrm{crit}}(\theta)}^\omega(\varphi)$
in the LHS of \eqref{eq:main-SHF}, more precisely
\begin{equation} \label{eq:step-appr}
	\Big| \bbE\Big[ f \Big( \sqrt{|\theta_k|} \,
	\big\{ \mathscr{Z}^{\theta_k}(\varphi) - {\textstyle \int \varphi} \big\}
	\Big) \Big] - 
	\bbE\Big[ f \Big( \sqrt{|\theta_k|} \,
	\big\{ Z_{N_k,\beta_{N_k}^{\mathrm{crit}}(\theta_k)}^\omega(\varphi) 
	- {\textstyle \int \varphi} \big\}
	\Big) \Big] \Big| \le \frac{1}{k} \,.
\end{equation}
By possibly enlarging $N_k$, we assume that 
$N_k \ge \rme^{k |\theta_k|}$ which ensures 
$|\theta_k| \le \frac{1}{k} \, \log N_k \ll \log N_k$ as $k\to\infty$. 
Writing $\theta_k = -|\theta_k|$ since $\theta_k < 0$, we have
\begin{equation*}
	\beta_{N_k}^{\mathrm{crit}}(\theta_k) =
	\tfrac{1}{R_{N_k}} \big(1 - \tfrac{|\theta_k|}{\log N_k}\big)
	\qquad \text{with} \quad 1 \ll |\theta_k| \ll \log N_k  \,. 
\end{equation*}
This means that $\beta_{N_k}^{\mathrm{crit}}(\theta_k)$ is in the quasi-critical
regime \eqref{eq:quasicrit}, hence we can apply our main result \eqref{eq:main}
and deduce that
\begin{equation*}
	\bbE\Big[ f \Big( \sqrt{|\theta_k|} \,
	\big\{ Z_{N_k,\beta_{N_k}^{\mathrm{crit}}(\theta_k)}^\omega(\varphi) 
	- {\textstyle \int \varphi} \big\}
	\Big)  \Big] \xrightarrow[k\to\infty]{} 
	\bbE\big[ f\big( \mathcal{N} \big( 0 \,, v_{\varphi}\big)
	\big) \big] \,.
\end{equation*}
Recalling \eqref{eq:step-appr}, we obtain our goal \eqref{eq:main-SHF2}.\qed

\section{Second moment bounds: proof of Proposition~\ref{prop:approximationZbysum}}
\label{sec:moment2}

In this section we prove Proposition~\ref{prop:approximationZbysum}
exploiting a polynomial chaos expansion of the partition function.
We fix $(\beta_N)_{N\in\N}$ in the quasi-critical regime, see \eqref{eq:sigma} 
and \eqref{eq:quasicrit}, and $\varphi \in C_c(\R^2)$.
We denote by $C, C', \ldots$ generic constants
that may vary from place to place.

\subsection{Polynomial chaos expansion}
\label{sec:poly}

The partition function admits a key polynomial chaos expansion \cite{CSZ17a}.
Let us define, for $\beta > 0$,
\begin{equation} \label{eq:eta}
	\begin{aligned}
		&\xi_\beta(n,x) := \rme^{\beta\omega(n,x)-\lambda(\beta)}-1\,, 
		\qquad \text{for }  n \in \N \,, \ x \in \Z^2 \,.
	\end{aligned}
\end{equation}
Recalling \eqref{eq:sigma}, we note that $(\xi_\beta(n,x))_{n\in\N, x\in\Z^2}$
are independent random variables with
\begin{equation} \label{eq:meanvarxi}
	\bbE[\xi_\beta] = 0 \,, \qquad \bbE[\xi_\beta^2] = \sigma_\beta^2 \,, \qquad
	\bbE[|\xi_\beta|^k] \le C_k \, \sigma_\beta^k \quad \forall k \ge 2 \,,
\end{equation}
for some $C_k < \infty$
(for the bound on $\bbE[|\xi_\beta|^k]$ see, e.g., \cite[eq. (6.7)]{CSZ17a}).

We denote by $q_{n}(x)$ the random walk transition kernel:
\begin{equation}\label{eq:defofq}
	q_n(x) := \P (S_n=x \,|\, S_0 = 0)\,.
\end{equation}
Then, writing $\rme^{\sum_n \{\beta \omega(n,x)-\lambda(\beta)\}} =
\prod_n (1 + \xi_\beta(n,x))$ and expanding the product,
we can write $Z_{(A,B], \beta}^\omega(z) $ in \eqref{eq:ZAB} as
the following polynomial chaos expansion: 
\begin{equation}\label{eq:partfunc}
\begin{split}
	Z_{(A,B], \beta}^\omega(z) 
	= 1 + \sum_{k=1}^\infty 
	\ \sum_{\substack{A<n_1<\ldots<n_k\le B\\
	x_1,\ldots,x_k \in \Z^2}} \, & q_{n_1}(x_1 - z) \, \xi_\beta(n_1, x_1) \times \\
	& \times \prod_{j=2}^{k} q_{n_j-n_{j-1}}(x_j-x_{j-1}) \, \xi_\beta(n_j,x_j)\,,
\end{split}
\end{equation}
where we agree that the time variables $n_1 < \ldots < n_k$ are summed in the set
$(A, B] \cap \Z$ (in particular, the seemingly infinite sum over $k$ can be stopped at $B-A$).

Plugging \eqref{eq:partfunc} into \eqref{eq:X}, we obtain 
a corresponding polynomial chaos expansion for $X_N$,
recall \eqref{eq:X} and \eqref{eq:phiN}: if we define the averaged random walk transition kernel
\begin{equation}\label{eq:kernel1}
	q_{n}^\varphi(x) := 
	\sum_{z \in \Z^2} q_{n}(x-z) \, \varphi(z) \,,
	\qquad \text{for } \ \varphi: \Z^2 \to \R \,,
\end{equation}
we obtain
\begin{equation} \label{eq:Zexplicit}
		X_N = \frac{\sqrt{\theta_N}}{N} \sum_{k=1}^\infty 
		\, \sum_{\rule{0pt}{1.3em}\substack{0<n_1<\ldots<n_k\le N\\
			 x_1,\ldots,x_k \in \Z^2}} \!\!\!\!\!\!\!
			 q_{n_1}^{\phi_N}(x_1) \, \xi_{\beta_N}(n_1,x_1) 
			 \prod_{j=2}^{k} q_{n_j-n_{j-1}}(x_j-x_{j-1}) \, \xi_{\beta_N}(n_j,x_j)\,.
\end{equation}
The analogous polynomial chaos expansion for the random variables $X_{N,M}^{(i)}$, see \eqref{eq:Xi},
is obtained from \eqref{eq:Zexplicit} restricting the sum
to $\frac{i-1}{M}N<n_1<\ldots<n_k\le \frac{i}{M}N$:
\begin{equation}\label{eq:Z(i)}
	\begin{aligned}
		X_{N,M}^{(i)}
		= \frac{\sqrt{\theta_N}}{N} \, \sum_{k=1}^\infty 
	\ \sum_{\substack{\frac{i-1}{M}N<n_1<\ldots<n_k\le \frac{i}{M}N\\
				x_1,\ldots,x_k \in \Z^2}} \, &
				q_{n_1}^{\phi_N}(x_1) \, \xi_{\beta_N}(n_1,x_1) \, \times \\ 
				&\times \prod_{j=2}^{k} q_{n_j-n_{j-1}}(x_j-x_{j-1}) \, \xi_{\beta_N}(n_j,x_j)\,.
	\end{aligned}
\end{equation}

\begin{remark}\label{rem:orthogonal}
Since the random variables $(\xi_\beta(n,x))_{n \in \N, x \in \Z^2}$ are independent
and centered, see \eqref{eq:eta}, \emph{the terms in the polynomial chaos
\eqref{eq:partfunc}, \eqref{eq:Zexplicit}, \eqref{eq:Z(i)} are orthogonal in $L^2$}.
\end{remark}

We finally recall the local limit theorem for the simple random walk on $\Z^2$, see
\cite[Theorem~2.1.3]{LL10}: as $n \to \infty$, uniformly for $x \in \Z^2$
we have\footnote{The scaling factor in \eqref{eq:llt} is $n/2$ because the covariance matrix of 
the simple random walk on $\Z^2$ is $\frac{1}{2}I$, 
while the factor $2 \ind_{(m,z) \in \Z^3_{\text{even}}}$ is due to periodicity.}
\begin{equation} \label{eq:llt}
	q_n(x) = \frac{1}{n/2} \Big( g\Big(\tfrac{x}{\sqrt{n/2}}\Big) + o(1) \Big) \,
	2 \, \ind_{(n,x) \in \Z^3_{\mathrm{even}}} \,, \qquad \text{where} \qquad
	g(y) := \frac{\rme^{-\frac{1}{2}|y|^2}}{2\pi} ,
\end{equation}
and we set $\Z^3_{\mathrm{even}}:= \big\{ \, y = \big( \, y_1, \, y_2, \, y_3 \, \big)\in \Z^3 
\colon y_1 + y_2 + y_3 \in 2\Z \, \big\}$.

\subsection{Proof of Proposition~\ref{prop:approximationZbysum}}

Note that $\sum_{i=1}^{M} X_{N,M}^{(i)}$ is a polynomial chaos 
where all time variables $n_1 < \ldots < n_k$ belong to 
\emph{one of the intervals} $( \frac{i-1}{M}N \,, \, \frac{i}{M}N ]$, 
see \eqref{eq:Z(i)}.
It follows that $X_N$ is a \emph{larger polynomial chaos}
than $\sum_{i=1}^{M} X_{N,M}^{(i)}$, i.e.\ it contains more terms, 
hence \emph{the difference $X_N - \sum_{i=1}^{M} X_{N,M}^{(i)}$ is orthogonal in $L^2$ 
to $\sum_{i=1}^{M} X_{N,M}^{(i)}$} (see Remark~\ref{rem:orthogonal}):
	\begin{equation*}
		\begin{aligned}
			\bigg\| \, X_N \, - \, \sum_{i=1}^{M} X_{N,M}^{(i)} \, \bigg\|_{L^2}^2 
			& = \big\| X_N \big\|_{L^2}^2\, - \, \bigg\|  \sum_{i=1}^{M} X_{N,M}^{(i)} \, 
			\bigg\|_{L^2}^2
			=\big\| X_N \big\|_{L^2}^2\, - \, \sum_{i=1}^{M} \big\| X_{N,M}^{(i)} \big\|_{L^2}^2 .
		\end{aligned}
	\end{equation*}
	
As a consequence, to prove our goals \eqref{eq:approxbysum0-1}
and \eqref{eq:approxbysum0-2} it is enough to show that
\begin{equation}\label{eq:limsecmom}
	\lim_{N\to\infty} \, \bbE\big[ X_N^2 \big] = v_{\varphi}\,, \qquad \quad
	\forall M\in\N: \quad
	\lim_{N\to \infty} \, \sum_{i=1}^{M} \bbE \Big[ \big(X_{N,M}^{(i)} \big)^2 \Big]
	= v_{\varphi} \,,
\end{equation}
where we recall that $v_{\varphi}$ is defined in \eqref{eq:sigmaphi}.
The first relation in \eqref{eq:limsecmom} follows from the second one,
because $X_N = X_{N,1}^{(1)}$. Then the proof is completed by the next result.
\qed

\begin{lemma}[Quasi-critical variance]\label{lem:limitsecmomZ(i)}
Fix $(\beta_N)_{N\in\N}$ in the quasi-critical regime, see \eqref{eq:sigma} and \eqref{eq:quasicrit},
and $\varphi \in C_c(\R^2)$.
For any $M \in \N$, the following holds for all $i=1,\ldots,M$:
 \begin{equation}
 	\label{eq:limsecmomZ(i)}
 	\lim_{N\to \infty} \bbE \big[ \big(X_{N,M}^{(i)} \big)^2 \big] 
	= v_{\varphi, \, (\frac{i-1}{M},\frac{i}{M}]} \, := \!
	\int\limits_{\R^2 \times \R^2} \! \varphi(x) \, \varphi(x')  \, 
	\bigg( \int_{\frac{i-1}{M}}^{\frac{i}{M}} \frac{1}{2u} \, \rme^{-\frac{|x-x'|^2}{2u}} \, 
	\mathrm{d}u \bigg)  \, \mathrm{d}x \, \mathrm{d}x' \,.
 \end{equation}
 \end{lemma}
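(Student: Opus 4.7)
The plan is to compute $\bbE[(X^{(i)}_{N,M})^2]$ directly from the polynomial chaos expansion \eqref{eq:Z(i)}, reduce it to a renewal-type series times a doubly-averaged heat kernel, and identify the continuum limit via the local limit theorem \eqref{eq:llt}.

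With $A := \tfrac{i-1}{M}N$ and $B := \tfrac{i}{M}N$, orthogonality of the chaos (Remark~\ref{rem:orthogonal}) combined with $\bbE[\xi_{\beta_N}^2]=\sigma_{\beta_N}^2$ gives
\begin{equation*}
\bbE[(X^{(i)}_{N,M})^2]=\frac{\theta_N}{N^2}\sum_{k\ge 1}\sigma_{\beta_N}^{2k}\sumtwo{A<n_1<\cdots<n_k\le B}{x_1,\ldots,x_k\in\Z^2}(q^{\varphi_N}_{n_1}(x_1))^2\prod_{j=2}^{k}q_{n_j-n_{j-1}}(x_j-x_{j-1})^2.
\end{equation*}
Using $\sum_{y\in\Z^2}q_m(y)^2=q_{2m}(0)$ to evaluate $\sum_{x_k},\ldots,\sum_{x_2}$ in turn, and the identity $\sum_{x_1}(q^{\varphi_N}_{n_1}(x_1))^2=\sum_{z,z'}\varphi_N(z)\varphi_N(z')\,q_{2n_1}(z-z')$, the above expression factorizes as
\begin{equation*}
\bbE[(X^{(i)}_{N,M})^2]=\frac{\theta_N\,\sigma_{\beta_N}^2}{N^2}\sum_{n_1=A+1}^{B}\Psi_N(B-n_1)\,\Phi_N(n_1),
\end{equation*}
where $\Phi_N(n_1):=\sum_{z,z'}\varphi_N(z)\varphi_N(z')q_{2n_1}(z-z')$ and
$\Psi_N(L):=\sum_{l\ge 0}\sigma_{\beta_N}^{2l}\sumtwo{m_1,\ldots,m_l\ge 1}{m_1+\cdots+m_l\le L}\prod_{j=1}^{l}q_{2m_j}(0)$
(with $l=0$ contributing $1$) is the truncated renewal series over the gaps $m_j=n_j-n_{j-1}$.

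The crucial input is the renewal asymptotic $\Psi_N(L)\sim (\log N)/\theta_N$ uniformly for $L\in[\eta N,N]$ and any fixed $\eta>0$. Dropping the constraint $\sum m_j\le L$ gives the geometric-series upper bound $\Psi_N(L)\le(1-\sigma_{\beta_N}^2 R_L)^{-1}$; combined with $R_L=(\log L)/\pi+O(1)$ and \eqref{eq:quasicrit}, this yields $1-\sigma_{\beta_N}^2 R_{tN}\sim\theta_N/\log N$ for every fixed $t>0$. A matching lower bound is obtained by restricting to the dominant range of $l$ (of order $\log N/\theta_N$) and observing that, for $L\ge\eta N$, the constraint $\sum m_j\le L$ is typically non-binding, so each $T_l(L)$ is comparable to $R_L^l$; this is in the spirit of the second-moment computations developed in the critical regime. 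Combined with the uniform crude bound $\Psi_N(L)\le C(\log N)/\theta_N$ valid for \emph{all} $L\le N$, this also lets us discard the boundary region $n_1\in(B-\delta N,B]$, whose contribution is $O(\delta)$ and vanishes as $\delta\to 0$.

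Finally, since $\sigma_{\beta_N}^2\log N\to\pi$, the prefactor $\theta_N\sigma_{\beta_N}^2\Psi_N(B-n_1)$ converges to $\pi$ for $n_1$ bounded away from $B$, reducing the problem to showing
\begin{equation*}
\frac{1}{N^2}\sum_{n_1=A+1}^{B}\Phi_N(n_1)\;\longrightarrow\;\frac{1}{2\pi}\int_{(i-1)/M}^{i/M}\!\frac{du}{u}\int_{\R^2\times\R^2}\varphi(x)\varphi(y)\,e^{-|x-y|^2/(2u)}\,dx\,dy.
\end{equation*}
This is a Riemann-sum statement: scaling $n_1=uN$, $z=\sqrt{N}x$, $z'=\sqrt{N}y$, the local limit theorem \eqref{eq:llt} gives $q_{2uN}(\sqrt{N}(x-y))\sim e^{-|x-y|^2/(2u)}/(\pi Nu)$ on the parity-allowed sublattice, and summing over $(z,z')$ with the $\varphi_N$-weights (the parity constraint contributing a factor $1/2$) produces the stated integral by dominated convergence, with a Gaussian envelope as dominant. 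Multiplication by $\pi$ yields $v_{\varphi,((i-1)/M,i/M]}$ exactly, as claimed.

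The main technical obstacle is the renewal asymptotic for $\Psi_N$ in the quasi-critical regime, and especially the uniform upper bound needed to discard the boundary region $n_1\approx B$: this is a quantitative sharpening of the standard sub-critical second-moment estimate, and crucially exploits the quasi-critical scaling \eqref{eq:quasicrit} rather than hypercontractivity, which is unavailable here. Once that estimate is in place, the rest is a routine combination of the local limit theorem and a Riemann-sum argument.
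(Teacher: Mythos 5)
Your proposal takes essentially the same approach as the paper: after computing the second moment via orthogonality of the chaos and collapsing the spatial sums, both arguments reduce to a truncated renewal series $\Psi_N$ times the doubly-averaged kernel $q_{2n}^{\varphi_N,\varphi_N}$, establish $\Psi_N(L)\sim\log N/\theta_N$ via the geometric-series upper bound and a Markov-inequality tail estimate on sums of i.i.d.\ draws from the law proportional to $q_{2n}(0)$, and conclude with the local-limit-theorem Riemann sum plus a crude bound for the boundary layer. The only difference is organizational --- the paper proves upper and lower bounds on $\bbE[(X_{N,M}^{(i)})^2]$ separately by enlarging and restricting the temporal sums, while you factorize and aim for a uniform asymptotic of $\Psi_N(L)$ for $L\in[\eta N,N]$ --- and your somewhat sketched lower bound for $\Psi_N$ is precisely the paper's argument with the random variables $T_i^{(N)}$ from \eqref{eq:distrT}.
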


\begin{proof}
Let us fix $M\in\N$ and $1 \le i \le M$. We
split the proof of \eqref{eq:limsecmomZ(i)} in the two bounds
	\begin{equation}
		\label{eq:ub}
		\limsup_{N\to \infty}\,
		\bbE \Big[ \big(X_{N,M}^{(i)} \big)^2 \Big] \le 
		v_{\varphi, \, (\frac{i-1}{M},\frac{i}{M}]}
	\end{equation}
and
	\begin{equation}
		\label{eq:lb}
		\liminf_{N\to \infty}\, \bbE 
		\Big[ \big(X_{N,M}^{(i)} \big)^2 \Big] \ge 
		v_{\varphi, \, (\frac{i-1}{M},\frac{i}{M}]} \,.
	\end{equation}

We first obtain an exact expression
for the second moment of $X_{N,M}^{(i)}$ by \eqref{eq:Z(i)}:
since the random variables $\xi_\beta(n,x)$ 
are independent with zero mean and variance $\sigma_\beta^2$, 
we have
\begin{equation*}
	\bbE \Big[ \big(X_{N,M}^{(i)} \big)^2 \Big]
	= \frac{\theta_N}{N^2}  \, \sum_{k=1}^\infty (\sigma_{\beta_N}^2)^k  
	\sum_{\substack{\frac{i-1}{M}N<n_1<\ldots<n_k\le \frac{i}{M}N\\
	x_1,\ldots,x_k \in \Z^2}} \, q_{n_1}^{\phi_N}(x_1)^2 \, 
	\prod_{j=2}^{k} q_{n_j-n_{j-1}}(x_j-x_{j-1})^2 \,.
\end{equation*}
We can sum the space variables $x_k, x_{k-1}, \ldots, x_2$ because
$\sum_{x\in\Z^2} q_n(x)^2 = q_{2n}(0)$, see \eqref{eq:defofq},
while to handle the sum over $x_1$ we note that, recalling \eqref{eq:kernel1},
\begin{equation} \label{eq:q2}
	\sum_{x\in\Z^2} q_{n}^{\varphi}(x)^2 = q_{2n}^{\varphi,\varphi} \qquad \text{where we set} \quad
	q_{m}^{\varphi,\varphi} := 
	\sum_{z, z' \in \Z^2} q_{m}(z-z') \, \varphi(z)
	\, \varphi(z') \,.
\end{equation}
We then obtain
\begin{equation} \label{eq:secmomforlb}
	\bbE \Big[ \big(X_{N,M}^{(i)} \big)^2 \Big]
	= \theta_N  \, \sum_{k=1}^\infty (\sigma_{\beta_N}^2)^k  
	\sum_{\frac{i-1}{M}N<n_1<\ldots<n_k\le \frac{i}{M}N} \, 
	\frac{q_{2n_1}^{\phi_N,\varphi_N}}{N^2} \, 
	\prod_{j=2}^{k} q_{2(n_j-n_{j-1})}(0) \,.
\end{equation}

\smallskip

We then prove the upper bound \eqref{eq:ub}.
We rename $n_1 = n$ and enlarge
the sum over the other time variables $n_2, \ldots, n_k$, by letting each increment $m_j:=n_j-n_{j-1}$
for $j=2,\ldots, k$ vary in the whole interval $(0,N]$: since $\sum_{m=1}^N q_{2m}(0) = R_N$, 
see \eqref{eq:RN}, we obtain
\begin{align}\notag
	\bbE \Big[ \big(X_{N,M}^{(i)} \big)^2 \Big]
	& \le \theta_N  \, 
	\sum_{\frac{i-1}{M}N<n \le \frac{i}{M}N} \, 
	\frac{q_{2n}^{\varphi_N,\varphi_N}}{N^2} \, 
	\sum_{k=1}^\infty (\sigma_{\beta_N}^2)^{k}
	( R_N)^{k-1} \\
	 \label{eq:ub+}
	& = \theta_N  \,
	\Bigg\{ \sum_{\frac{i-1}{M}N<n \le \frac{i}{M}N} \, 
	\frac{q_{2n}^{\varphi_N,\varphi_N}}{N^2} \Bigg\} \cdot \sigma_{\beta_N}^2 \cdot
	\frac{1}{1 \, - \, \sigma_{\beta_N}^2R_N} \,,
\end{align}
where we summed the geometric series since
$\sigma_{\beta_N}^2 R_N = 1- \frac{\theta_N}{\log N} < 1$ for large~$N$, by \eqref{eq:quasicrit}.
We will prove the following Riemann sum approximation, for any given $0 \le a < b \le 1$:
\begin{equation}\label{eq:Riemann}
	\lim_{N\to\infty} \,
	\sum_{aN <n \le bN} \, 
	\frac{q_{2n}^{\varphi_N,\varphi_N}}{N^2} \, = \!\!
	\int\limits_{\R^2 \times \R^2} \varphi(x) \, \varphi(x')  \, 
	\bigg( \int_{a}^{b} \frac{1}{u} \,
	g\bigg(\frac{x-x'}{\sqrt{u}}\bigg) \,
	\mathrm{d}u \bigg)  \, \mathrm{d}x \, \mathrm{d}x' \,,
\end{equation}
where $g(y) = \frac{1}{2\pi} \, \rme^{-\frac{1}{2}|y|^2}$ is the standard Gaussian density
on $\R^2$, see \eqref{eq:llt}. Plugging this into \eqref{eq:ub+}, 
since $1-\sigma_{\beta_N}^2 R_N = \frac{\theta_N}{\log N}$
and $\sigma_{\beta_N}^2 \sim \frac{1}{R_N} \sim \frac{\pi}{\log N}$ as $N\to\infty$
by \eqref{eq:quasicrit} and \eqref{eq:RN}, we obtain
precisely the upper bound \eqref{eq:ub}
(note that $\pi\, \frac{1}{u} \, g(\frac{x-x'}{\sqrt{u}}) = \frac{1}{2u} \, \exp(-\frac{|x-x'|^2}{2u})$).

\smallskip
Let us now prove \eqref{eq:Riemann}. This is based on the local limit theorem \eqref{eq:llt} as
$n\to\infty$, hence the case $a=0$ could be delicate, as the sum in \eqref{eq:Riemann}
starts from $n=1$ and, therefore, $n$ needs not be large. 
For this reason, we first show that small values of $n$ are negligible for \eqref{eq:Riemann}.
Since $\varphi$ is compactly supported, when we plug $f = \varphi_N$ into
$q_{2n}^{f,f}$, see \eqref{eq:q2},
we can restrict the sums
to $|z'| \le C \sqrt{N}$, which yields the following \emph{uniform bound}:
\begin{equation} \label{eq:ubq2}
	\forall m \in \N: \qquad
	|q_{m}^{\varphi_N,\varphi_N}| \le \|\varphi\|_\infty^2 \sum_{|z'| \le C \sqrt{N}}
	\sum_{z\in\Z^2} q_{m}(z-z') 
	\le C' \, \|\varphi\|_\infty^2 \, N \,.
\end{equation}
In particular, the contribution of $n \le \epsilon N$
to the LHS of \eqref{eq:Riemann} is $O(\epsilon)$. As a consequence,
it is enough to prove \eqref{eq:Riemann} when $a > 0$, which we assume henceforth.

Recalling \eqref{eq:q2} and applying \eqref{eq:llt}, we can write the LHS of \eqref{eq:Riemann} as follows:
\begin{equation*}
	\sum_{aN <n \le bN} \, 
	\frac{q_{2n}^{\varphi_N,\varphi_N}}{N^2} \, = \,
	\frac{1}{N^2}  \,
	\sum_{aN <n \le bN} \!\!
	\sum_{\substack{z, z' \in \Z^2 :\\ (n,z-z') \in \Z^3_{\mathrm{even}}}} \!\!
	\frac{2}{n} \, \Big( g\Big(\tfrac{z-z'}{\sqrt{n}}\Big) + o(1) \Big) 
	\, \phi \big( \tfrac{z }{\sqrt{N}} \big)
	\, \phi \big( \tfrac{z' }{\sqrt{N}} \big) \,,
\end{equation*}
where $o(1) \to 0$ as $N\to\infty$ (because $n > aN \to \infty$ and we assume $a>0$).
The additive term $o(1)$ gives a vanishing contribution as $N\to\infty$,
because we can bound $\frac{2}{n} \le \frac{2}{a N}$ and $|\varphi(\cdot)| \le \|\varphi\|_\infty$,
and the sums contain $O(N^3)$ terms (since $|z|, |z'| \le C \sqrt{N}$).
Introducing the rescaled variables $u := \frac{n}{N}$ and $x:=\frac{z}{\sqrt{N}}$,
$x':=\frac{z'}{\sqrt{N}}$, we can then rewrite the RHS as
\begin{equation*}
	\frac{1}{N^3}  \,
	\sum_{u \in (a,b] \cap \frac{\N}{N}} \!\!
	\sum_{\substack{x, x' \in \frac{\Z^2}{\sqrt{N}} :\\ 
	(Nu, \sqrt{N}(x-x')) \in \Z^3_{\mathrm{even}}}} \!\!
	\frac{2}{u} \, \Big( g\Big(\tfrac{x-x'}{\sqrt{u}}\Big) \Big) 
	\, \phi (x)
	\, \phi (x')
	 \, + \, o(1) \,,
\end{equation*}
which is a Riemann sum for the integral in the RHS of \eqref{eq:Riemann}.
Note that the restriction $(Nu, \sqrt{N}(x-x')) \in \Z^3_{\mathrm{even}}$
effectively \emph{halves} the range of the sum: indeed, for any given $u$ and $x$,
the sum over $x' = \frac{z'}{\sqrt{N}} \in \frac{\Z^2}{\sqrt{N}}$ is restricted to points
$z' \in \Z^2$ with a fixed parity (even or odd, depending on $u, x$).
This restriction is compensated by the multiplicative factor $2$, which disappears
as we let $N\to\infty$. This completes the proof of \eqref{eq:Riemann}.

\smallskip

We finally prove the lower bound \eqref{eq:lb}. 
We fix $\epsilon >0$ small enough and we bound
the RHS of \eqref{eq:secmomforlb} from below as follows:
\begin{itemize}
	\item we rename $n = n_1$ and we restrict its sum to the interval
	$\big( \, \frac{i-1}{M}N \,, \, (1-\epsilon)\frac{i}{M}N \big]$;
	\item for $k\ge 2$, we introduce the ``displacements'' $m_j := n_{j} - n_{1}$
	from $n_1$, for $j=2,\ldots, k$,
	and we restrict the sum over $n_2, \ldots, n_k$
	to the set $0<m_2<\ldots<m_{k}\le\epsilon \frac{i}{M}N$.
\end{itemize}
We thus obtain by \eqref{eq:secmomforlb}
\begin{equation}\label{eq:serieslb}
\begin{split}
	\bbE \Big[ \big(X_{N,M}^{(i)} \big)^2 \Big]
	\ge \, & \,\theta_N \,
	\sum_{\frac{i-1}{M}N<n \le (1-\epsilon) \frac{i}{M}N}
	\frac{q_{2n}^{\varphi_N,\varphi_N}}{N^2} \,  \times \\
	& \times  \Bigg( \sigma_{\beta_N}^2 +
	\sum_{k=2}^\infty (\sigma_{\beta_N}^2)^{k} \!\!
	\sum_{0< m_2 < \ldots < m_{k} \le \epsilon \frac{i}{M}N} \!
	q_{2m_2}(0) \prod_{j=3}^{k} q_{2(m_j-m_{j-1})}(0)  \Bigg) .
\end{split}
\end{equation}

We now give a probabilistic interpretation to the sum over $m_2,\ldots, m_k$:
following \cite{CSZ19a} and recalling \eqref{eq:RN}, given $N \in \N$ 
we define i.i.d.\ random variables $(T_i^{(N)})_{i \in \N}$ with distribution
\begin{equation}\label{eq:distrT}
	\P \big(T_i^{(N)}=n\big) \, = \, \frac{q_{2n}(0)}{R_N} \, \ind_{\{1,\ldots,N\}}(n)\,,
\end{equation}  
so that the second line of \eqref{eq:serieslb} can be written, renaming
$\ell = k-1$, as
\begin{equation} \label{eq:probaint}
\begin{split}
	& \sigma_{\beta_N}^2 \Bigg(1 \,+ \,  \sum_{\ell=1}^{\infty} 
	(\sigma_{\beta_N}^2  R_N)^\ell
	\,  \P \Big(T_1^{(N)} \, + \ldots + \, T_\ell^{(N)} \, \le \, \eps \, \tfrac{i}{M}N\Big) \Bigg) \\
	& \quad  = \sigma_{\beta_N}^2 \Bigg( \frac{1}{1-\sigma_{\beta_N}^2 R_N} - 
	\sum_{\ell=1}^{\infty} 
	(\sigma_{\beta_N}^2 R_N)^\ell
	\,  \P \Big(T_1^{(N)} \, + \ldots + \, T_\ell^{(N)} \, > \, \eps \, \tfrac{i}{M}N\Big) \Bigg) \,.
\end{split}
\end{equation}
Plugging this into \eqref{eq:serieslb} and recalling \eqref{eq:ubq2}, we obtain
\begin{equation}\label{eq:serieslb2}
\begin{split}
	\bbE \Big[ \big(X_{N,M}^{(i)} \big)^2 \Big]
	\ge  & \, \theta_N \, \Bigg\{ 
	\sum_{\frac{i-1}{M}N<n \le (1-\epsilon) \frac{i}{M}N}
	\frac{q_{2n}^{\varphi_N,\varphi_N}}{N^2} \Bigg\} \,  
	\frac{\sigma_{\beta_N}^2}{1-\sigma_{\beta_N}^2 R_N}  \\
	&  - \big( C' \, \|\varphi\|_\infty^2 \big) \, \theta_N \, \sigma_{\beta_N}^2 \, 
	\sum_{\ell=1}^{\infty} 
	(\sigma_{\beta_N}^2 R_N)^\ell
	\,  \P \Big(T_1^{(N)} \, + \ldots + \, T_\ell^{(N)} \, > \, \tfrac{\eps}{M} N\Big) \,.
\end{split}
\end{equation}
The first term in the RHS is similar to \eqref{eq:ub+}, 
just with $(1-\epsilon) \frac{i}{M}$ instead of $\frac{i}{M}$, therefore 
\emph{we already proved that it converges to 
$v_{\varphi, \, (\frac{i-1}{M},(1-\epsilon)\frac{i}{M}]}$ as
$N\to\infty$},
see \eqref{eq:Riemann} and the following lines (recall also \eqref{eq:limsecmomZ(i)}).
Letting $\epsilon \downarrow 0$ after $N\to\infty$ we recover 
$v_{\varphi, \, (\frac{i-1}{M},\frac{i}{M}]}$,
hence to prove \eqref{eq:lb} we just need
to show that the second term in the RHS of \eqref{eq:serieslb2}
is negligible:
\begin{equation}\label{eq:last-goal}
	\lim_{N\to\infty} \, \theta_N \, \sigma_{\beta_N}^2 \, 
	\sum_{\ell=1}^{\infty} 
	(\sigma_{\beta_N}^2 R_N)^\ell
	\,  \P \Big(T_1^{(N)} \, + \ldots + \, T_\ell^{(N)} \, > \, \tfrac{\eps}{M} N\Big) = 0 \,.
\end{equation}

Recall that the random variables $(T^{(N)}_i)_{i\in\N}$ are i.i.d.\
with distribution \eqref{eq:distrT}. Since $q_{2n}(0) \le \frac{C}{n}$
by the local limit theorem \eqref{eq:llt}, we have
$\E[T^{(N)}_i] = \frac{1}{R_N} \sum_{n=1}^N n \, q_{2n}(0) \le C \frac{N}{R_N}$
and, by Markov's inequality, we can bound
\begin{equation*}
	\P \Big(T_1^{(N)} \, + \ldots + \, T_\ell^{(N)} \, > \, \tfrac{\eps}{M} N\Big)
	\le \frac{\E \big[T_1^{(N)} \, + \ldots + \, T_\ell^{(N)} \big]}{\tfrac{\eps}{M} N}
	\le \frac{C \, \ell }{\frac{\epsilon}{M} \, R_N} \, .
\end{equation*}
Since $\sum_{\ell = 1}^\infty \ell \, x^\ell = \frac{x}{(1-x)^2}$, we obtain
\begin{equation*}
\begin{split}
	\theta_N \, \sigma_{\beta_N}^2 \, \sum_{\ell=1}^{\infty} 
	(\sigma_{\beta_N}^2 R_N)^\ell
	\,  \P \Big(T_1^{(N)} \, + \ldots + \, T_\ell^{(N)} \, > \, \tfrac{\eps}{M} N\Big)
	&\le \theta_N \, \sigma_{\beta_N}^2 \,
	\frac{C}{\frac{\epsilon}{M} \, R_N} \, 
	\frac{\sigma_{\beta_N}^2 R_N}{(1-\sigma_{\beta_N}^2 R_N)^2}  \\
	& = \frac{C \, M}{\epsilon} \, \frac{\theta_N \, (\sigma_{\beta_N}^2)^2}{(1-\sigma_{\beta_N}^2 R_N)^2} \,.
\end{split}
\end{equation*}
Note that $1-\sigma_{\beta_N}^2 R_N = \frac{\theta_N}{\log N}$
and $\sigma_{\beta_N}^2 \sim \frac{1}{R_N} \sim \frac{\pi}{\log N}$  by
\eqref{eq:quasicrit} and \eqref{eq:RN}, hence the last term is asymptotically equivalent
to $\frac{C M}{\epsilon} \, \frac{ \pi^2}{\theta_N} \to 0$ as $N\to\infty$,
since $\theta_N \to \infty$, see \eqref{eq:quasicrit}.
This shows that \eqref{eq:last-goal} holds and completes the proof of 
Proposition~\ref{prop:approximationZbysum}.
\end{proof}

\section{General moment bounds}
\label{sec:moment3}

In this section we estimate the \emph{moments of the partition function} $Z_{L,\beta}^{\omega}$
through a refinement of the operator approach from \cite[Theorem~6.1]{CSZ23}
and \cite[Theorem~1.3]{LZ21+} (inspired by \cite{GQT21}).
We point out that these papers deal with the critical and sub-critical regimes, 
while we are interested the quasi-critical regime~\eqref{eq:quasicrit}.

For transparency, and in view of future applications,
we develop in this section a \emph{non asymptotic approach
which  is independent of the regime of~$\beta$}: we obtain bounds 
with explicit constants 
which hold for any given system size~$L$ and disorder strength~$\beta$.
Some novelties with respect to \cite{CSZ23,LZ21+}
are described in Remarks~\ref{rem:novel1}, \ref{rem:novel2},~\ref{rem:novel3}.
These bounds will be crucially applied in Section~\ref{sec:4th-3} to prove
Proposition~\ref{prop:stimamomento}.

\smallskip

The section is organised as follows:
\begin{itemize}
\item in  Subsection~\ref{sec:4th-1}
we give an \emph{exact expansion} for the moments, see Theorem~\ref{th:moments-exact},
in terms of suitable operators linked to the random walk and the disorder;

\item Subsection~\ref{sec:4th-2} we deduce \emph{upper bounds} for the moments,
see Theorems~\ref{th:moments-bound1} and~\ref{th:moments-bound2}, which
depend on two pairs of quantities, that we call \emph{boundary terms}
and \emph{bulk terms};

\item in Subsection~\ref{sec:rw-bounds} we state some basic random walk bounds
needed in our analysis (we consider general symmetric random walks with sub-Gaussian tails);

\item in Subsections~\ref{sec:4th-boundaryL} and~\ref{sec:4th-bulk}
we obtain explicit estimates on the boundary terms and bulk terms, which plugged
in Theorem~\ref{th:moments-bound2} yield explicit bounds on the moments.
\end{itemize}

\smallskip
\subsection{Moment expansion}
\label{sec:4th-1}

The partition function $Z_{(A,B],\beta}^{\omega}(z)$ in \eqref{eq:ZAB} is called ``point-to-plane'',
since random walk paths start at~$S_0 = z$ but have no constrained endpoint. We
introduce a ``point-to-point'' version, for
simplicity when $(A,B] = (0,L]$ for $L \in \N$, restricting to random
walk paths with a fixed endpoint $S_L = w$:
\begin{equation}\label{eq:Zgen}
	\cZ_{L,\beta}^{\omega}(z,w):= \E \Big[ \rme^{\sum_{n =1}^{L-1}
	\{ \beta  \omega(n,S_n)-\lambda(\beta)\}} \, \ind_{\{S_L = w\}} 
	\, \Big| \, S_0 = z \Big]
\end{equation}
(we stop the sum at $n=L-1$ for later convenience).

Given two ``boundary conditions'' $f, g : \Z^2 \to \R$, we define the averaged version
\begin{equation} \label{eq:Zgenav}
	\cZ_{L,\beta}^{\omega}(f,g) :=
	\sum_{z,w \in \Z^2} f(z) \, \cZ_{L,\beta}^{\omega}(z,w) \, g(w) \,,
\end{equation}
where we use a different font to avoid confusions with the diffusively rescaled
average \eqref{eq:ZNav}.
We focus on the \emph{centred moments} 
of $Z_{L,\beta}^{\omega}(f,g)$, that we denote by
\begin{equation} \label{eq:Mkdef}
	\cM_{L,\beta}^{h}(f,g)  :=
	\bbE \Big[ \big( \cZ_{L,\beta}^{\omega}(f,g) - \bbE[\cZ_{L,\beta}^{\omega}(f,g)] \big)^h \Big]
	\qquad \text{for } h \in \N \,.
\end{equation}

\begin{remark}\label{rem:backtous}
Recalling the definition \eqref{eq:Xi} of $X_{N,M}^{(i)}$, we have the equality in law
\begin{equation} \label{eq:eqlaw}
	X_{N,M}^{(i)} \,\overset{d}{=}\,
	\frac{\sqrt{\theta_N}}{N} \, \cZ_{L,\beta_N}^{\omega}(f,g) \qquad
	\text{for suitable $L,f,g$} \,.
\end{equation}
More precisely, in view of the translated partition
function $Z_{(\frac{i-1}{M}N,\frac{i}{M}N],\beta_N}^{\omega}$
appearing in  \eqref{eq:Xi}, 
relation \eqref{eq:eqlaw} holds if we choose:
\begin{itemize}
\item $L = \frac{i}{M}N - \frac{i-1}{M}N
= \frac{N}{M}$ by translation invariance;
\item $f= q_{\frac{i-1}{M}N}^{\varphi_N}$, that is $f$
is the function $\varphi_N$ from  \eqref{eq:Xi}
``evolved from time~$0$ to time~$\frac{i-1}{M}N$
under the random walk'', i.e.
convolved with the random walk kernel $q_{\frac{i-1}{M}N}$ as in \eqref{eq:kernel1};
\item $g \equiv 1$.
\end{itemize}
We can thus write
\begin{equation}\label{eq:backtous}
	\mathbb E\left [(X_{N,M}^{(i)})^4\right ] = 
	\frac{\theta_N^2}{N^4} \, \cM_{\frac{N}{M},\beta_N}^{4}(f_i, g ) \,, \qquad
	\text{where} \quad 
	\begin{cases}
	f(z) := q_{\frac{i-1}{M}N}^{\varphi_N}(z) \,, \\
	g(w) := 1 \,.
	\end{cases}
\end{equation}
To prove Proposition~\ref{prop:stimamomento},
in Section~\ref{sec:4th-3} we will focus on $\cM_{L,\beta}^{4}(f,g)$.
\end{remark}

Henceforth we fix $h\in\N$ with $h\ge 2$ (the interesting case is $h \ge 3$).
We are going to give an \emph{exact expression} for $\cM_{L,\beta}^{h}(f,g)$,
see Theorem~\ref{th:moments-exact}.
We first need some notation.

\smallskip

We denote by $I \vdash\{1,\ldots,h\}$ a 
\emph{partition of $\{1,\ldots,h\}$}, i.e.\ a family $I = \{I^1,  \ldots, I^m\}$
of non empty disjoint subsets $I^j \subseteq \{1,\ldots, h\}$ with $I^1 \cup \ldots \cup 
I^m = \{1,\ldots, h\}$.
We single out:
\begin{itemize}
\item the unique partition $I = * := \{\{1\}, \{2\}, \ldots, \{h\}\}$ composed by all singletons;
\item the $\binom{h}{2}$ 
partitions of the form $I = \{\{a,b\}, \{c\} \colon c \ne a, c \ne b\}$, that we call \emph{pairs}.
\end{itemize}

\begin{example}[Cases $h =2,3, 4$]\label{ex:234}
All partitions $I \vdash \{1,2\}$ are $I=*$ and $I = \{\{1,2\}\}$.

All partitions $I \vdash \{1,2,3\}$ are $I=*$,  three \emph{pairs}
$I=\{\{a,b\},\{c\}\}$ and $I =\{\{1,2,3\}\}$.

All partitions $I \vdash \{1,2,3,4\}$ are 
 $I=*$,  six \emph{pairs} $I = \{\{a,b\},\{c\},\{d\}\}$, 
three \emph{double pairs} $I = \{\{a,b\}, \{c,d\}\}$, 
four \emph{triples} $I = \{\{a,b,c\}, \{d\}\}$ and
the \emph{quadruple} $I = \{\{1,2,3,4\}\}$.
\end{example}

Given a partition $I = \{I^1,  \ldots, I^m\} \vdash \{1,\ldots,h\}$,
we define for $\bx =(x^1, \ldots , x^h) \in (\Z^2)^h$
\begin{equation}\label{eq:xsimI}
\begin{split}
	\bx \sim I \quad \ \mbox{if and only if}
	\ \begin{cases}  x^a=x^b  
	& \text{if } a,b \in I^i \text{ for some } i \,, \\
	\rule{0pt}{1.2em}x^a \ne x^b & \text{if } a \in I^i, b \in I^j \text{ for some } i \ne j
	\text{ with } |I^i|, |I^j| \ge 2 \,.
	\end{cases}
\end{split}
\end{equation}
For instance $\bx \sim \{\{1,2\}, \{3\}, \{4\}\}$ means $x^1 = x^2$, while
$\bx \sim \{\{1,2\}, \{3,4\}\}$ means $x^1 = x^2$ and $x^3 = x^4$ with $x^1 \ne x^3$.
Note that $\bx \sim *$ imposes no constraint.
We also define 
\begin{equation}\label{ZI-space}
	(\Z^2)^h_I := \big\{ \bx\in (\Z^2)^h \colon \bx=(x^1, \ldots , x^h) \sim I \big\} \,,
\end{equation}
which is essentially a copy of $(\Z^2)^{m}$ embedded in $(\Z^2)^h$,
because $\bx \sim I = \{I^1,  \ldots, I^m\}$ means that we only have $m$ ``free'' variables,
one for each component $I^i$.

A family $I_1, \ldots, I_r$ of partitions $I_i = \{I_i^1, \ldots, I_i^{m_i}\} \vdash \{1,\ldots,h\}$
is said to have \emph{full support} if 
any $a\in\{1,\ldots,h\}$ belongs to some partition $I_i$ not as a singleton,
i.e.\ $a \in I_i^j$ with $|I_i^j| \ge 2$.

\begin{example}[Full support for $h=4$]
A single partition $I_1 \vdash \{1,2,3,4\}$ with full support
is either the quadruple $I_1 = \{\{1,2,3,4\}\}$
or a double pair $I_1 = \{\{a,b\},\{c,d\}\}$.
There are many families of two partitions $I_1, I_2 \vdash \{1,2,3,4\}$ with full support, 
for instance two non overlapping pairs such as 
$I_1 = \{\{1,3\},\{2\},\{4\}\}$, $I_2 = \{\{2,4\},\{1\},\{3\}\}$.
\end{example}

\smallskip

We now introduce $h$-fold analogues of the random walk transition kernel
\eqref{eq:defofq} and of its averaged version \eqref{eq:kernel1}:
given partitions $I, J \vdash \{1,\ldots,h\}$, we define for $\bx, \bz \in (\Z^2)^h$
\begin{equation}\label{eq:Qdef2}
	\sfQ_{n}^{I,J}(\bz, \bx)
	:= \ind_{\{\bz \sim I, \, \bx \sim J\}} \, \prod_{i=1}^h q_{n}(x^i - z^i)  \,,
	\qquad
	\sfq_{n}^{f,J}(\bx)
	:= \ind_{\{\bx \sim J\}} \, \prod_{i=1}^h q_{n}^f (x^i)  \,.
\end{equation}
Given $m \in \N_0$
and $J \vdash \{1,\ldots, h\}$ with $J \ne *$, we define for $\bx, \bz \in (\Z^2)^h$
the weighted \emph{Green's kernel}
\begin{equation}\label{eq:sfU}
	\sfU^J_{m,\beta}(\bz,\bx) 
	:= \begin{cases}
	\displaystyle \sum_{k=1}^\infty \! \big( \mathbb{E}[\xi_{\beta}^J] \big)^{k} \!\!\!
	\sum_{\substack{0 =: n_0 <n_1<\dots <n_k:= m\\ 
	\by_1, \ldots, \by_{k-1} \in (\Z^2)^h \\
	\by_0 :=\bz \,, \ \by_k:=\bx}} \
	\prod_{i=1}^k \sfQ^{J, J}_{n_i-n_{i-1}}(\by_{i-1}, \by_i)
	& \text{if } m \ge 1 \,, \\
	\ind_{\lbrace \bz = \bx \sim J \rbrace}
	& \text{if } m = 0 \,,
	\end{cases}
\end{equation}
where the outer sum is actually finite ($k \le m$ by the constraints
on the $n_i$'s) and we define
\begin{equation}\label{eq:Exi}
	\mathbb E[\xi_{\beta}^J] := \prod_{i: \, |J^i|\geq 2}\mathbb E[\xi_{\beta}^{|J^i|}]
	\qquad \text{for $J = \{J^1,  \ldots, J^\ell\}$ with $J \ne *$} \,.
\end{equation}
When $J$ is a pair, this reduces to 
$\mathbb E[\xi_{\beta}^J] = \mathbb E[\xi_{\beta}^2] = \sigma_{\beta}^2$, see \eqref{eq:meanvarxi}.

\begin{remark}[On the definition of $\sfU^J$]\label{rem:novel1}
We point out that $\sfU^J$ was only defined in \cite{CSZ23,LZ21+} when $J$ is a pair.
Defining $\sfU^J$ for any partition $J$ makes formulas simpler,
as it avoids to distinguish between pairs and non-pairs in
the sums \eqref{eq:MNh4} and \eqref{eq:Xir}.

For a pair $J = \{\{a,b\}, \{c\}: c \ne a,b\}$, 
since $\bx \sim J$ for $\bx = (x^1, \ldots, x^h) \in (\Z^2)^h$
simply means $x^a = x^b$,
by Chapman-Kolmogorov we can express
\begin{equation} \label{eq:sfUpair}
	\sfU^J_{m,\beta}(\bz,\bx) = U_{m,\beta}(x^a-z^a)
	\, \ind_{\{x^b = x^a, \, z^b = z^a\}} \, \prod_{c \ne a,b} \, q_m(x^c - z^c) \,,
\end{equation}
where we define $U_{m,\beta}(x)$ for $x \in \Z^2$ by
\begin{equation}\label{eq:Ux}
	U_{m,\beta}(x) := \sum_{k=1}^\infty (\sigma_\beta^2)^{k} \!
	\sum_{\substack{0 =: n_0 <n_1<\dots <n_k:= m \\
	x_0 := 0, \; x_1, \ldots, x_{k-1} \in \Z^2,\; x_k := x}} \
	\prod_{i=1}^k q_{n_i - n_{i-1}}(x_i - x_{i-1})^2 \,.
\end{equation}
(We denote a generic sequence of points $x_i \in \Z^2$ using subscripts, while we use
superscripts to denote the $h$ components $x^a \in \Z^2$ 
of a vector $\bx = (x^1, \ldots, x^h) \in (\Z^2)^h$.)
\end{remark}

Given the countable set $\bbT = (\Z^2)^h$,
for the one-variable functions $\sfq^f, \sfq^g : \bbT \to \R$ and the
two-variable functions $\sfU_i, \sfQ_i: \bbT \times \bbT \to \R$
we use the matrix-vector notation
\begin{equation*}
\begin{split}
	\bigg\langle \sfq^f ,\, \sfU_1 \, \bigg\{ \prod_{i=2}^r \sfQ_i \, \sfU_i \bigg\} 
	\,  \sfq^g\bigg\rangle
	 := \!\!\!\!\!\! \sum_{\substack{\bz_1, \ldots, \bz_r \in \bbT \\
	 \bz'_1, \ldots, \bz'_r \in \bbT}}
	 \sfq^f(\bz_1) \, \sfU_1(\bz_1, \bz'_1) \, \bigg\{ \prod_{i=2}^r \sfQ_i(\bz'_{i-1}, \bz_i) \,
	\sfU_i(\bz_{i}, \bz'_i) \bigg\} \,  \sfq^g(\bz'_r) \,.
\end{split}
\end{equation*}
We can now give the announced expansion for $\cM_{L,\beta}^{h}(f,g)$,
that we prove in Appendix~\ref{sec:4thapp}.

\begin{theorem}[Moment expansion]\label{th:moments-exact}
Let $\cZ_{L,\beta}^{\omega}(f,g)$ be the averaged partition function in \eqref{eq:Zgenav}
with centred moments $\cM_{L,\beta}^{h}(f,g)$, see \eqref{eq:Mkdef}.
For any $h\in\N$ with $h \ge 2$ we have
\begin{equation}\label{eq:MNh4}
\begin{aligned}
	\cM_{L,\beta}^{h}(f,g)
	= \ \sum_{r=1}^\infty \ &
	\sum_{0 < n_1 \le m_1 <\cdots <n_r \le m_r < L} 
	\sum_{\substack{I_1, \ldots, I_r \vdash\{1, \ldots, h\} \\
	\text{with full support} \\
	\text{and } I_i \ne I_{i-1}, \,\, I_i \ne * \ \forall i}} 
	\bigg\{ \prod_{i=1}^r \bbE[\xi_{\beta}^{I_i}] \bigg\} \,\times
	\\
	& \qquad \times \bigg\langle \sfq_{n_1}^{f,I_1} \,, \,\,
	\sfU^{I_1}_{m_1-n_1,\beta} \, 
	\bigg\{\prod_{i=2}^r \sfQ_{n_i - m_{i-1}}^{I_{i-1}, I_i} \, \,
	\sfU_{m_i - n_{i}, \beta}^{I_i} \, \bigg\} \, 
	\sfq_{L - m_r}^{g,I_r} \bigg\rangle \,.
\end{aligned}
\end{equation}
\end{theorem}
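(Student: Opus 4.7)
I would prove~\eqref{eq:MNh4} by direct polynomial chaos expansion followed by a combinatorial reorganisation. First, analogously to~\eqref{eq:partfunc}, I expand the point-to-point partition function $\cZ_{L,\beta}^\omega(z,w)$ of~\eqref{eq:Zgen} using $\rme^{\beta\omega(n,x)-\lambda(\beta)} = 1 + \xi_\beta(n,x)$; averaging via~\eqref{eq:Zgenav} against the boundary data $f, g$ and subtracting the mean (which removes the $k = 0$ term) gives an expansion of $\cZ_{L,\beta}^\omega(f,g) - \bbE[\cZ_{L,\beta}^\omega(f,g)]$ as a sum over $k \ge 1$ disorder insertions at strictly increasing times in $(0,L)$ and positions in $\Z^2$, weighted by a product of random walk kernels linking the $f$-weighted start, the successive insertion points, and the $g$-weighted endpoint. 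Raising to the $h$-th power yields an $h$-fold sum where each ``path'' $a \in \{1,\ldots,h\}$ carries its own insertion data $(t_j^{(a)}, y_j^{(a)})_{j=1}^{k_a}$ with $k_a \ge 1$.

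Taking the disorder expectation and using the independence of the $\xi_\beta(n,x)$, the expectation factorises over distinct time-space points as $\prod_{(t,y)} \bbE[\xi_\beta^{n(t,y)}]$, where $n(t,y)$ counts the pairs $(a,j)$ with $(t_j^{(a)}, y_j^{(a)}) = (t,y)$. Because $\bbE[\xi_\beta] = 0$, only configurations with $n(t,y) \ge 2$ at every occurring point survive; equivalently, every path must share every one of its insertions with at least one other path, which is precisely the \emph{full support} condition in~\eqref{eq:MNh4}.

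The core step is to reparametrise these surviving configurations by \emph{clusters}. At each distinct time $t^*$ hosting insertions, define a partition $J_{t^*} \vdash \{1,\ldots,h\}$ whose non-singleton blocks are the maximal groups of paths inserting at a common position, and whose singletons are the non-inserting paths; the asymmetric relation~\eqref{eq:xsimI} $\bx \sim J_{t^*}$ then automatically encodes both ``equal positions within a block'' and ``distinct positions between distinct non-singleton blocks'', which avoids double-counting. I then group consecutive distinct insertion times sharing the same abstract partition into a cluster $(n_i, m_i, I_i)$ with $n_i, m_i$ its first and last insertion times: maximality forces $I_i \ne I_{i-1}$, while $I_i \ne *$ is automatic since every cluster contains a non-singleton block. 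Within cluster $i$, after separating out the first insertion at $n_i$ (which carries the factor $\bbE[\xi_\beta^{I_i}]$ of~\eqref{eq:Exi}), the remaining insertions together with the free inter-insertion evolution of all $h$ coordinates reproduce exactly the Green's kernel $\sfU^{I_i}_{m_i-n_i,\beta}$ of~\eqref{eq:sfU}. Between clusters, paths evolve freely subject to configurations $I_{i-1}$ at time $m_{i-1}$ and $I_i$ at time $n_i$, producing $\sfQ^{I_{i-1},I_i}_{n_i-m_{i-1}}$; the extremal segments $(0,n_1]$ and $[m_r,L)$ similarly yield $\sfq_{n_1}^{f,I_1}$ and $\sfq_{L-m_r}^{g,I_r}$. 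Assembling all these factors reproduces~\eqref{eq:MNh4}.

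The main obstacle is the combinatorial bookkeeping to ensure that this cluster-based reindexing is a bijection with the original chaos terms. In particular, one must verify that the asymmetric nature of the relation $\bx \sim I_i$ (treating singletons and non-singleton blocks differently) matches precisely what the unrestricted sum over insertion configurations produces, and that the split of the cluster insertions into ``first insertion at $n_i$ with factor $\bbE[\xi_\beta^{I_i}]$'' and ``remaining insertions absorbed into $\sfU^{I_i}$'' correctly reproduces the product of higher moments $\bbE[\xi_\beta^{|B|}]$ at every insertion point, with no spurious contribution from the singleton coordinates.
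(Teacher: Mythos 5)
Your proposal follows essentially the same route as the paper's proof in Appendix~\ref{sec:4thapp}: expand $\cZ_{L,\beta}^\omega(f,g)-\bbE[\cZ_{L,\beta}^\omega(f,g)]$ as a polynomial chaos, raise to the $h$-th power, take disorder expectation, re-index the surviving terms by the set of active space-time points together with the partitions $I_i$ recording which paths insert at a common position (using Chapman--Kolmogorov to split kernels that skip over active times), and absorb stretches of consecutive equal partitions into the Green's kernels $\sfU^{I_i}$. The only cosmetic difference is that the paper first records an intermediate identity summing over one partition per active time \emph{without} the constraint $I_i\ne I_{i-1}$, and only then groups repeated consecutive partitions, whereas you define the clusters directly; the bookkeeping points you flag at the end (the asymmetric $\bx\sim I$ relation, the split of the cluster weight into one explicit factor $\bbE[\xi_\beta^{I_i}]$ plus the remaining $k$ powers inside $\sfU^{I_i}$) are exactly the ones the paper checks, and are resolved as you describe.
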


\begin{remark}[Sanity check]
In case $h=2$, the conditions $I_i \ne I_{i-1}$ and $I_i \ne *$ in \eqref{eq:MNh4}
force $r=1$ and $I_1 = \{\{1,2\}\}$. Then, recalling \eqref{eq:sfUpair}-\eqref{eq:Ux},
formula \eqref{eq:MNh4} reduces to
\begin{equation*}
	\cM_{L,\beta}^{2}(f,g)
	= \bbvar[\cZ_{L,\beta}^{\omega}(f,g)] =
	\sigma_\beta^2 \, \sum_{\substack{0 < n \le m < L\\
	z,x \in \Z^2}} q_n^f(z) \, U_{m-n,\beta}(x-z)
	\, q_{L-m}^g(x) \,,
\end{equation*}
which is a classical expansion for the variance, see e.g.\
\cite[eq.~(3.51)]{CSZ23}.
\end{remark}

\begin{remark}[Boundary conditions]\label{rem:novel2}
In \cite{CSZ23,LZ21+}, the quantity $\sfq_{n_1}^{f,I_1}$ in \eqref{eq:MNh4}
is expanded as $\sfQ_{n_1}^{I_1,*}  f^{\otimes h}$ (recall \eqref{eq:Qdef2} and \eqref{eq:kernel1});
similarly for $\sfq_{L - m_r}^{g,I_r}$.
We keep these quantities unexpanded
in order to derive tailored estimates, see Subsection~\ref{sec:4th-boundaryL},
which could not be derived by simply applying operator norm bounds on $\sfQ_{n_1}^{I_1,*}$
as in \cite{CSZ23,LZ21+}.
\end{remark}

\smallskip
\subsection{Moment upper bounds}
\label{sec:4th-2}
We next obtain upper bounds from \eqref{eq:MNh4}.
For $L\in\N$ we define the summed kernels
\begin{equation}
	\label{eq:QLap}
	\widehat \sfQ^{I,J}_{L}(\bz, \bx)
	:= \sum_{n=1}^L 
	\sfQ^{I,J}_{n}(\bz, \bx) 
	\,,  \qquad
	\widehat \sfq_{L}^{f,I} (\bx) := 
	\sum_{n=1}^L \sfq_{n}^{f,I}(\bx) \,.
\end{equation}
Recalling \eqref{eq:sfU} and \eqref{eq:Exi} we set, with some abuse of notation,
\begin{equation}\label{eq:sfUalt}
	|\sfU|^{J}_{m, \beta}(\bz, \bx) :=
	\sfU^{J}_{m, \beta}(\bz, \bx) \text{ \emph{from  (\ref{eq:sfU}) with $\bbE[\xi_{\beta}^J]$
	replaced by $|\bbE[\xi_{\beta}^J]|$}} \,.
\end{equation}
Then, for $L\in\N$ and $\lambda \ge 0$, we define the 
Laplace sum
\begin{equation} \label{eq:ULap}
	|\widehat \sfU|^{J}_{L, \lambda, \beta}(\bz, \bx) 
	:= \ind_{\{\bz = \bx \sim J\}} + \sum_{m=1}^L \rme^{-\lambda m} \,
	|\sfU|^{J}_{m, \beta}(\bz, \bx)  \,. 
\end{equation}
Finally, we introduce a \emph{uniform bound} on
the right boundary function $\sfq_{L - m_r}^{g,I_r}$ in \eqref{eq:MNh4}:
\begin{equation}\label{eq:Qright}
	\widebar{\sfq}_{L}^{g,J} (\bz) := 
	\max_{1 \le n \le L} \sfq_{n}^{g,J}(\bz) \,.
\end{equation}
We can now state our first moment upper bound.

\begin{theorem}[Moment upper bound, I]\label{th:moments-bound1}
Let $\cZ_{L,\beta}^{\omega}(f,g)$ denote the averaged partition function in \eqref{eq:Zgenav}
with centred moment $\cM_{L,\beta}^{h}(f,g)$, see \eqref{eq:Mkdef}, for $h\in\N$ with $h \ge 2$.
For any $\lambda \ge 0$ we have the upper bound
\begin{equation}\label{eq:MNh5}
\begin{gathered}
	\big| \cM_{L,\beta}^{h}(f,g) \big|
	\le  \,\rme^{\lambda L} \, \sum_{r=1}^\infty \, \Xi(r)
\end{gathered}
\end{equation}
with
\begin{equation}\label{eq:Xir}
	\Xi(r) := \!\!\!\!\!\! \sum_{\substack{I_1, \ldots, I_r \vdash\{1, \ldots, h\} \\
	\text{with full support} \\
	\text{and } I_i \ne I_{i-1} , \,\, I_i \ne * \,\forall i}}   
	\!\!\!\!\! \bigg\{
	\prod_{i=1}^r \big| \bbE[\xi_{\beta}^{I_i}] \big|  \bigg\} \,
	 \bigg\langle  \widehat \sfq_{L}^{|f|,I_1}  , 
	|\widehat \sfU|^{I_1}_{L, \lambda, \beta} \,
	\bigg\{\prod_{i=2}^r \, \widehat \sfQ_{L}^{I_{i-1}, I_i} \,
	|\widehat \sfU|_{L, \lambda, \beta}^{I_i}  \bigg\} \, 
	\widebar{\sfq}_{L}^{|g|,I_r}  \bigg\rangle \,.
\end{equation}
\end{theorem}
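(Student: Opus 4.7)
The plan is to derive \eqref{eq:MNh5}--\eqref{eq:Xir} from the exact expansion \eqref{eq:MNh4} by three operations: (i) taking absolute values term by term, (ii) removing the cross-dependence of the time variables via a uniform bound on the right boundary factor, and (iii) inserting Laplace weights to decouple the resulting time sums.

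First I would apply the triangle inequality inside the sum in \eqref{eq:MNh4}, pulling absolute values through the sums over $r$, over the partitions $I_1, \ldots, I_r$, over the ordered times, and over the intermediate spatial variables hidden in the inner product $\langle \cdot \rangle$. This replaces $\bbE[\xi_{\beta}^{I_i}]$ by $|\bbE[\xi_{\beta}^{I_i}]|$ and each $\sfU^{I_i}_{\cdot,\beta}$ by the non-negative kernel $|\sfU|^{I_i}_{\cdot,\beta}$ from \eqref{eq:sfUalt}, and produces the boundary factors $\sfq^{|f|}, \sfq^{|g|}$; the kernels $\sfQ^{I_{i-1}, I_i}_{\cdot}$ are already non-negative since $q_n \ge 0$.

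Next I would reparametrize by setting $t_0 := n_1$, $s_i := m_i - n_i$ for $1 \le i \le r$, $t_i := n_{i+1} - m_i$ for $1 \le i \le r-1$, and $t_r := L - m_r$. These satisfy $t_0, t_1, \ldots, t_r \ge 1$, $s_1, \ldots, s_r \ge 0$, and $\sum_{i=0}^r t_i + \sum_{i=1}^r s_i = L$. The critical step is to absorb $t_r$ by the uniform bound
$$|\sfq_{t_r}^{g, I_r}(\bz)| \le \widebar{\sfq}_L^{|g|, I_r}(\bz) \qquad \text{for every } t_r \in \{1, \ldots, L\},$$
which is precisely the role of definition \eqref{eq:Qright}. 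Once this is done, the integrand depends only on the remaining $2r$ free variables $t_0, s_1, t_1, \ldots, s_{r-1}, s_r$, whose sum is at most $L$.

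Finally, since $t_0 + s_1 + \ldots + s_r \le L$, the pointwise Laplace estimate
$$1 \le e^{\lambda L} \, e^{-\lambda t_0} \prod_{i=1}^{r-1} e^{-\lambda t_i} \prod_{i=1}^{r} e^{-\lambda s_i}$$
may be inserted without loss inside the constrained sum. Relaxing the constrained sum to independent ranges $t_0, t_i \in \{1, \ldots, L\}$ and $s_i \in \{0, \ldots, L\}$ (legitimate because all remaining factors are non-negative) and performing each time sum separately collapses the expressions to the kernels of \eqref{eq:QLap} and \eqref{eq:ULap}: $\sum_{t_0 = 1}^L e^{-\lambda t_0} \sfq_{t_0}^{|f|, I_1} \le \widehat{\sfq}_L^{|f|, I_1}$, $\sum_{t_i = 1}^L e^{-\lambda t_i} \sfQ^{I_{i-1}, I_i}_{t_i} \le \widehat{\sfQ}_L^{I_{i-1}, I_i}$ (using $e^{-\lambda t_i} \le 1$), and $\sum_{s_i = 0}^L e^{-\lambda s_i} |\sfU|^{I_i}_{s_i, \beta} = |\widehat{\sfU}|^{I_i}_{L, \lambda, \beta}$. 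Reassembling these into the matrix-vector form of \eqref{eq:Xir} and extracting the overall factor $e^{\lambda L}$ yields \eqref{eq:MNh5}. The main obstacle is the cross-dependence of the time variables through the linear constraint $\sum t_i + \sum s_i = L$, which blocks any factorized Laplace summation; it is unlocked precisely by the uniform right-boundary bound $\widebar{\sfq}$, which lets one eliminate $t_r$ from the integrand before decoupling.
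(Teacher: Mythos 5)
Your proof is correct and follows essentially the same path as the paper's: take absolute values in the exact expansion \eqref{eq:MNh4}, replace the right boundary kernel $\sfq_{L-m_r}^{|g|,I_r}$ by the uniform bound $\widebar{\sfq}_L^{|g|,I_r}$ to eliminate the dependence on $m_r$, insert the Laplace factor $\rme^{\lambda L}\rme^{-\lambda m_r}\ge 1$, and then decouple the time sums (the paper inserts only $\rme^{-\lambda\sum_i(m_i-n_i)}$ directly on the $\sfU$-increments, whereas you first put a weight on every increment and then drop the ones on the $\sfq$/$\sfQ$ factors using $\rme^{-\lambda t}\le 1$, an immaterial difference). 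One small point you correctly handle that the paper's prose glosses over: the increments $m_i-n_i$ range over $\{0,\ldots,L\}$, not $\{1,\ldots,L\}$, which is why the $m=0$ term $\ind_{\{\bz=\bx\sim J\}}$ appears in the definition \eqref{eq:ULap} of $|\widehat\sfU|^J_{L,\lambda,\beta}$.
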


\begin{proof}
Replacing $\bbE[\xi_{\beta}^{I_i}]$, $f$, $g$, 
$\sfU$ in \eqref{eq:MNh4} respectively by $|\bbE[\xi_{\beta}^{I_i}]|$, $|f|$, $|g|$,  $|\sfU|$,
every term becomes non-negative. We next replace $\sfq_{L - m_r}^{|g|,I}$
by the uniform bound $\widebar{\sfq}_{L}^{|g|,I} $ and then enlarge the sum in \eqref{eq:MNh4},
allowing increments $n_i - m_{i-1}$ and $m_i - n_i$
to vary freely in $\{1,\ldots, L\}$.
Plugging $1 \le \rme^{\lambda L} \, \rme^{-\lambda m_r}
\le \rme^{\lambda L} \, \rme^{-\lambda \sum_{i=1}^r (m_i - n_i)}$,
we obtain \eqref{eq:MNh5}.
\end{proof}

\begin{remark}[On the right boundary condition]\label{rem:novel3}
The function $\sfq_{L - m_r}^{g,I_r}$
in \eqref{eq:MNh4} is controlled
in \cite{CSZ23,LZ21+} by introducing an average over~$L$, which forces
the function~$g$ to be estimated in $\ell^\infty$. Our approach avoids such averaging, via the
quantity $\widebar{\sfq}_{L}^{g,J}$ from \eqref{eq:Qright}:
this lets us estimate the function $g$ in $\ell^q$ also for $q < \infty$
(see Proposition~\ref{prop:right}).
\end{remark}

\smallskip
We next bound $\Xi(r)$ in \eqref{eq:Xir}, starting from the scalar product.
Let us recall some functional analysis:
given a countable set $\T$ and a function $f:\T \to \R$, we define
\begin{equation}\label{eq:defnormaellp}
	\Vert f \Vert_{\ell^p(\bbT)} 
	= \Vert f \Vert_{\ell^p} := \bigg( \sum_{z \in \T} |f(z)|^p\bigg)^{\frac{1}{p}} 
	\qquad \text{for }p \in [1,\infty)\,.
\end{equation}
For a linear operator ${\sfA: \ell^q(\bbT) \to \ell^q(\bbT')}$,
with $p, q \in (1,\infty)$ 
such that $\frac{1}{p}+\frac{1}{q}=1$, we have
\begin{equation}
	\Vert \sfA \Vert_{\ell^q\to \ell^q} \,:= \,
	\sup_{g \nequiv 0} \, \frac{\Vert \sfA\, g \Vert_{\ell^q(\bbT')}}{\Vert g \Vert_{\ell^q(\bbT)}}
	\,=\,  \sup_{ \Vert f\Vert_{\ell^p(\bbT')}\leq 1,
	\, \Vert g\Vert_{\ell^q(\bbT)}\leq 1} \langle f, \sfA\, g\rangle \,.
\end{equation}
By H\"older's inequality  $|\langle g, h \rangle| \le \|g\|_{\ell^p} \, \|h\|_{\ell^q}$,
so the scalar product in \eqref{eq:Xir} is bounded by
\begin{equation} \label{eq:factorbound}
	\big\Vert \widehat \sfq_{L}^{|f|,I_1} \big\Vert_{\ell^p} \;
	\big\Vert |\widehat \sfU|^{I_1}_{L,\lambda, \beta} \big\Vert_{\ell^q \to \ell^q} \,
	\bigg\{ \, \prod_{i=2}^r \, \big\Vert \widehat \sfQ_{L}^{I_{i-1}, I_i} 
	\big\Vert_{\ell^q \to \ell^q}
	\; \big\Vert |\widehat \sfU|_{L,\lambda, \beta}^{I_i} \big\Vert_{\ell^q \to \ell^q}
	\bigg\} \; \big\Vert \widebar{\sfq}_{L}^{|g|,I_r}  \big\Vert_{\ell^q} \,.
\end{equation}

\begin{remark}[Restricted $\ell^q$ spaces]
Due to the constraint $\ind_{\{\bz \sim I, \bx \sim J\}}$ in \eqref{eq:Qdef2},
we may  regard $\widehat \sfQ^{I,J}_{L}$
as a linear operator from $\ell^q((\Z^2)^h_J)$ to $\ell^q((\Z^2)^h_I)$, see \eqref{ZI-space}.
Similarly, we may view $|\widehat \sfU|^{J}_{L, \lambda,\beta}$ 
as a linear operator from $\ell^q((\Z^2)^h_J)$ to itself.
\end{remark}

To make the bound \eqref{eq:factorbound} more useful, we 
introduce a \emph{weight} $\cW: (\Z^2)^h \to (0,\infty)$,
that we also identify with the diagonal operator $\cW(\bx) \, \ind_{\{\bx=\by\}}$,
so that in particular
\begin{equation*}
	\big(\cW \, \sfA \, \tfrac{1}{\cW}\big)(\bx,\by) :=
	\cW(\bx) \, \sfA(\bx,\by) \, \frac{1}{\cW(\by)} \,.
\end{equation*}
Inserting $(\cW\, \frac{1}{\cW})$ between each pair of adjacent
operators in \eqref{eq:MNh5}, we improve \eqref{eq:factorbound} to
\begin{equation} \label{eq:factorbound+}
\begin{split}
	\big\Vert \widehat \sfq_{L}^{|f|,I_1} \tfrac{1}{\cW} \big\Vert_{\ell^p} \;
	& \big\Vert \cW \, |\widehat \sfU|^{I_1}_{L,\lambda, \beta}
	\, \tfrac{1}{\cW} \big\Vert_{\ell^q \to \ell^q} \, \times \\
	\times\, & \bigg\{ \, \prod_{i=2}^r \, \big\Vert \cW \, \widehat \sfQ_{L}^{I_{i-1}, I_i} 
	\, \tfrac{1}{\cW} \big\Vert_{\ell^q \to \ell^q}
	\; \big\Vert \cW \, |\widehat \sfU|_{L,\lambda, \beta}^{I_i}
	\, \tfrac{1}{\cW} \big\Vert_{\ell^q \to \ell^q}
	\bigg\} \; \big\Vert \cW \, \widebar{\sfq}_{L}^{|g|,I_r}  \big\Vert_{\ell^q} \,.
\end{split}
\end{equation}
In view of \eqref{eq:MNh5}-\eqref{eq:Xir}, this leads directly to our second moment upper bound.

\begin{theorem}[Moment upper bound, II]\label{th:moments-bound2}
Let $\cZ_{L,\beta}^{\omega}(f,g)$ be the averaged partition function in \eqref{eq:Zgenav},
whose centred moment are known to satisfy $\cM_{L,\beta}^{h}(f,g)
\le \rme^{\lambda L} \, \sum_{r=1}^\infty \, \Xi(r)$
for $h \ge 2$ and  $\lambda \ge 0$, see \eqref{eq:Mkdef} and \eqref{eq:MNh5}.
For any weight $\cW: (\Z^2)^h \to (0,\infty)$
and for $p,q \in (1,\infty)$ 
with $\frac{1}{p} + \frac{1}{q} = 1$, we have the following upper bound on $\Xi(r)$ from \eqref{eq:Xir}:
\begin{equation}\label{eq:MNh6}
\begin{split}
	\Xi(r)
	&\,\le\, 
	\Big( \max_{I \ne *} \big\Vert \widehat \sfq_{L}^{|f|,I} \tfrac{1}{\cW} \big\Vert_{\ell^p} \Big) 
	\, \Big( \max_{J \ne *} \big\Vert \cW \,
	\widebar{\sfq}_{L}^{|g|,J} \big\Vert_{\ell^q} \Big)
	\,  \Xi^\bulk(r)
\end{split}
\end{equation}
with
\begin{equation}\label{eq:Xibulk}
	\Xi^\bulk(r) \,:= \!\!\!
	\sum_{\substack{I_1, \ldots, I_r \vdash\{1, \ldots, h\} \\
	\text{with full support} \\
	\text{and } I_i \ne I_{i-1} , \,\, I_i \ne * \ \forall i}}   
	\!\!\! \bigg\{ \prod_{i=1}^r \big| \bbE[\xi_{\beta}^{I_i}] \big|  \bigg\} \,
	\big( \big\| \widehat \sfQ_{L}\big\|_{\ell^q \to \ell^q}^{\cW} \big)^{r-1} \,
	\big( \big\| |\widehat \sfU|_{L,\lambda, \beta}\big\|_{\ell^q \to \ell^q}^{\cW} \big)^{r} \,
\end{equation}
where we set for short
\begin{align}\label{eq:Crho1}
	\big\| \widehat \sfQ_{L} \big\|_{\ell^q \to \ell^q}^{\cW}
	& := \max_{\substack{I, J \ne * \\ I \ne J}} \, 
	\big\Vert \cW \, \widehat \sfQ_{L}^{I,J}  \, \tfrac{1}{\cW}
	\big\Vert_{\ell^q \to \ell^q} \,, \\
	\label{eq:Crho2}
	\big\| |\widehat \sfU|_{L,\lambda, \beta}\big\|_{\ell^q \to \ell^q}^{\cW}
	&:= \max_{I \ne *} \, 
	\big\Vert \cW \, |\widehat \sfU|_{L,\lambda, \beta}^{I}
	\, \tfrac{1}{\cW} \big\Vert_{\ell^q \to \ell^q} \,.
\end{align}
\end{theorem}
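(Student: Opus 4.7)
The proof is essentially laid out in the discussion leading up to the statement, so the plan is to carry out that argument cleanly. I would start from the definition \eqref{eq:Xir} of $\Xi(r)$, where each summand is a scalar product on $\ell^q((\Z^2)^h)$ of the form
\[
	\bigl\langle \widehat\sfq_L^{|f|,I_1},\; \sfT\, \widebar{\sfq}_L^{|g|,I_r}\bigr\rangle,
	\qquad
	\sfT := |\widehat\sfU|^{I_1}_{L,\lambda,\beta}\prod_{i=2}^{r} \widehat\sfQ_L^{I_{i-1},I_i}\, |\widehat\sfU|^{I_i}_{L,\lambda,\beta}.
\]
By H\"older's inequality with exponents $p,q\in(1,\infty)$ satisfying $\tfrac1p+\tfrac1q=1$, this is bounded by $\|\widehat\sfq_L^{|f|,I_1}\|_{\ell^p} \, \|\sfT\,\widebar{\sfq}_L^{|g|,I_r}\|_{\ell^q}$, and the operator $\sfT$ in turn obeys the submultiplicative bound
\[
	\|\sfT\|_{\ell^q\to\ell^q} \;\le\; \big\||\widehat\sfU|^{I_1}_{L,\lambda,\beta}\big\|_{\ell^q\to\ell^q}
	\prod_{i=2}^{r} \big\|\widehat\sfQ^{I_{i-1},I_i}_{L}\big\|_{\ell^q\to\ell^q}\,\big\||\widehat\sfU|^{I_i}_{L,\lambda,\beta}\big\|_{\ell^q\to\ell^q}.
\]
This yields the unweighted factorization \eqref{eq:factorbound}.

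Next I would introduce the weight $\cW$, viewed as a diagonal operator on $\ell^q((\Z^2)^h)$ together with its inverse $1/\cW$. Inserting $(\cW)(1/\cW)$ between each pair of consecutive operators in the product defining $\sfT$, and likewise between the boundary functions and the extremal operators, gives the weighted version \eqref{eq:factorbound+}: the boundary factors become $\|\widehat\sfq_L^{|f|,I_1}/\cW\|_{\ell^p}$ and $\|\cW\,\widebar{\sfq}_L^{|g|,I_r}\|_{\ell^q}$, while each $\widehat\sfQ^{I_{i-1},I_i}_L$ and $|\widehat\sfU|^{I_i}_{L,\lambda,\beta}$ gets conjugated by $\cW$. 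Crucially, because the partitions $I_1,\dots,I_r$ in the sum \eqref{eq:Xir} always satisfy $I_i\neq *$ and $I_i\neq I_{i-1}$, each partition occurring as an index is legitimate for the maxima defining \eqref{eq:Crho1}--\eqref{eq:Crho2}.

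I would then bound each of the partition-dependent factors by its maximum over admissible partitions, uniformly in $I_1,\dots,I_r$. Specifically, the first factor is at most $\max_{I\neq *}\|\widehat\sfq_L^{|f|,I}/\cW\|_{\ell^p}$, the last at most $\max_{J\neq *}\|\cW\,\widebar{\sfq}_L^{|g|,J}\|_{\ell^q}$, each $\widehat\sfQ$-norm is at most $\|\widehat\sfQ_L\|_{\ell^q\to\ell^q}^{\cW}$ and each $|\widehat\sfU|$-norm is at most $\||\widehat\sfU|_{L,\lambda,\beta}\|_{\ell^q\to\ell^q}^{\cW}$. Since these bounds no longer depend on $I_1,\dots,I_r$, they pull out of the sum in \eqref{eq:Xir}, and what remains inside is precisely $\sum \prod_i |\bbE[\xi_\beta^{I_i}]|$ weighted by $(\|\widehat\sfQ_L\|^{\cW})^{r-1}(\||\widehat\sfU|_{L,\lambda,\beta}\|^{\cW})^{r}$, i.e.\ $\Xi^\bulk(r)$ from \eqref{eq:Xibulk}. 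This yields \eqref{eq:MNh6}.

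There is no real obstacle here, as every step is mechanical; the only point that needs minor care is to verify that interpreting $\widehat\sfQ^{I,J}_L$ as an operator from $\ell^q((\Z^2)^h_J)$ to $\ell^q((\Z^2)^h_I)$ is compatible with the weighted conjugation by $\cW$ and with the H\"older pairing on each intermediate space, which follows from the diagonal nature of $\cW$ and the indicator constraints $\ind_{\{\bz\sim I,\bx\sim J\}}$ already built into the kernels \eqref{eq:Qdef2}--\eqref{eq:sfU}.
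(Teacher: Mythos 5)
Your proof is correct and follows the same route as the paper's: the paper proves Theorem~\ref{th:moments-bound2} implicitly via the discussion culminating in \eqref{eq:factorbound} (Hölder plus operator-norm submultiplicativity applied to each summand of $\Xi(r)$) and its weighted refinement \eqref{eq:factorbound+} (inserting $\cW\,\tfrac{1}{\cW}$ between consecutive factors), after which one simply takes maxima over the admissible partitions to pull the norms out of the sum. You reproduce this argument faithfully, including the observation that the constraints $I_i\ne *$ and $I_i\ne I_{i-1}$ built into \eqref{eq:Xir} make every index legitimate for the maxima in \eqref{eq:Crho1}--\eqref{eq:Crho2}.
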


Note that the bound \eqref{eq:MNh6}-\eqref{eq:Xibulk} 
depends on two pairs of quantities, that we call
\begin{equation}\label{eq:terms}
	\text{\emph{boundary terms}} \ \begin{cases}
	\rule{0pt}{1em}\big\Vert \widehat \sfq_{L}^{|f|,I} 
	\tfrac{1}{\cW} \big\Vert_{\ell^p}  \\
	\rule{0pt}{1.4em} \big\Vert \cW \, \overline{\sfq}_{L}^{|g|,J} \big\Vert_{\ell^q}
	\end{cases}
	\text{and} \quad \ \
	\text{\emph{bulk terms}} \ \begin{cases}
	\rule{0pt}{1.1em} 
	\big\| \widehat \sfQ_{L} \big\|_{\ell^q \to \ell^q}^{\cW} \\
	\rule{0pt}{1.4em}
	\big\| |\widehat \sfU|_{L,\lambda, \beta}\big\|_{\ell^q \to \ell^q}^{\cW}
	\end{cases} \!\!\!\!\!\!\! \,.
\end{equation}
We will estimate these terms in Subsections~\ref{sec:4th-boundaryL} and~\ref{sec:4th-bulk}
respectively, exploiting some basic random walk bounds that we collect in Subsection~\ref{sec:rw-bounds}.

\begin{remark}[Choice of the parameters]
For our goals, we will later fix $p=q=2$
(in other contexts, such as \cite{LZ21+}, one needs to take
$p = p_L \to 1$, $q = q_L \to \infty$).
We will then choose an \emph{exponential
weight $\cW = \cW_t$ of rate $t \ge 0$}: for $\bx = (x^1, \ldots, x^h) \in (\Z^2)^h$
\begin{equation}\label{eq:weights}
	\cW_t(\bx) := \prod_{i=1}^h w_t(x^i) 
	\qquad \text{where} \qquad
	w_t(x) := \rme^{-t |x|} \ \text{for } x \in \Z^2 \,.
\end{equation}
The exponential decay ensures that
$\| g \, w_t \|_{\ell^q} < \infty$ for the ``flat'' boundary condition $g \equiv 1$,
see \eqref{eq:backtous}, and we will fix $t = 1/\sqrt{N}$ so that
$\|f \, w_t^{-1} \|_{\ell^p} < \infty$ for $f = \varphi_N \approx \varphi(\cdot/\sqrt{N})$.

Note that by the triangle inequality we can bound, for all $\bx,\bz \in (\Z^2)^h$,
\begin{equation} \label{eq:ratiocW}
	\frac{\cW_t(\bz)}{\cW_t(\bx)} \le \prod_{i=1}^h \rme^{t|z^i-x^i|} \,.
\end{equation}
We will later need to consider an additional weight $\cV_s^I$, see \eqref{eq:V} below.
\end{remark}

\smallskip
We finally bound the product
$\prod_{i=1}^r \big| \bbE[\xi_{\beta}^{I_i}] \big|$
in \eqref{eq:Xibulk}.
We assume that $\beta > 0$ is small enough so that (say)
$\sigma_\beta^2 \le 1$
(recall $\sigma_\beta$
from \eqref{eq:sigma} and \eqref{eq:meanvarxi}
and note that $\lim_{\beta \downarrow 0} \sigma_\beta = 0$).

\begin{proposition}[Moments of disorder]\label{lem:dismom}
Assume that $\sigma_\beta^2 \le 1$.
For any $h\in\N$ there is  
$C(h) < \infty$
(which depends on the disorder distribution)
such that 
\begin{equation}\label{eq:dismom}
	\ \text{for any } I \ne * \colon \quad
	\big| \bbE[\xi_{\beta}^{I}] \big|
	\le \begin{cases}
	\sigma_\beta^{2} & \text{if $I = \{\{a,b\}, \{c\} \colon c \ne a,b\}$ is a pair}\,, \\
	\rule{0pt}{1.2em}C(h) \, \sigma_\beta^{3} & \text{if $I \ne *$ is not a pair}\,.
	\end{cases}
\end{equation}
Moreover
\begin{equation}\label{eq:full-support}
	\text{if $I_1, \ldots, I_r \vdash \{1,\ldots, h\}$ have full support:} \qquad
	\prod_{i=1}^r \big| \bbE[\xi_{\beta}^{I_i}] \big| \le
	C(h)^r \, \sigma_\beta^{\max\{2r, h\}} \,.
\end{equation}
\end{proposition}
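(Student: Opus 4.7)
The plan is to exploit the product structure of $\bbE[\xi_\beta^I]$ from \eqref{eq:Exi} together with the moment bound $\bbE[|\xi_\beta|^k] \le C_k \sigma_\beta^k$ from \eqref{eq:meanvarxi}, systematically tracking the total exponent of $\sigma_\beta$ using the assumption $\sigma_\beta \le 1$. The key counting quantity will be
\begin{equation*}
   s(I) \,:=\, \sum_{j:\,|I^j|\ge 2} |I^j| \,,
\end{equation*}
which records the number of indices in $\{1,\dots,h\}$ that lie in a non-singleton block of $I$. By \eqref{eq:Exi} and \eqref{eq:meanvarxi} we have the basic estimate
\begin{equation*}
   \bigl|\bbE[\xi_\beta^I]\bigr| \,=\, \prod_{j:\,|I^j|\ge 2}\bigl|\bbE[\xi_\beta^{|I^j|}]\bigr|
   \,\le\, \Bigl(\prod_{j:\,|I^j|\ge 2} C_{|I^j|}\Bigr)\,\sigma_\beta^{s(I)}
   \,\le\, C(h)\,\sigma_\beta^{s(I)}\,,
\end{equation*}
where $C(h):=\max_{2\le k\le h} C_k^{\,h}$ depends only on the disorder law and on~$h$.

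For the first assertion, if $I$ is a pair then it has exactly one non-singleton block, of size~$2$, and $\bbE[\xi_\beta^I]=\bbE[\xi_\beta^2]=\sigma_\beta^2$ by \eqref{eq:meanvarxi}, with no constant. If instead $I\ne *$ is not a pair, then either $I$ has a block of size $\ge 3$, or it has at least two blocks of size $\ge 2$; in both cases $s(I)\ge 3$. Since $\sigma_\beta\le 1$, the bound above yields $|\bbE[\xi_\beta^I]|\le C(h)\,\sigma_\beta^{s(I)}\le C(h)\,\sigma_\beta^{3}$, which is \eqref{eq:dismom}.

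For \eqref{eq:full-support}, the argument reduces to showing
\begin{equation*}
   \sum_{i=1}^r s(I_i)\,\ge\,\max\{2r,\,h\}\,.
\end{equation*}
The bound $\sum_i s(I_i)\ge 2r$ is immediate because each $I_i\ne *$ contains a block of size $\ge 2$, hence $s(I_i)\ge 2$. The bound $\sum_i s(I_i)\ge h$ is precisely where the full support hypothesis enters: rewriting
\begin{equation*}
   \sum_{i=1}^r s(I_i) \,=\, \sum_{i=1}^{r} \sum_{j:\,|I_i^j|\ge 2} |I_i^j|
   \,=\, \sum_{a=1}^{h}\,\#\bigl\{(i,j)\colon a\in I_i^j,\ |I_i^j|\ge 2\bigr\}\,,
\end{equation*}
the full support assumption guarantees that each term in the outer sum is $\ge 1$, so the total is $\ge h$. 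Combining with the per-factor estimate and using $\sigma_\beta\le 1$, we obtain
\begin{equation*}
   \prod_{i=1}^r\bigl|\bbE[\xi_\beta^{I_i}]\bigr|
   \,\le\, C(h)^r\,\sigma_\beta^{\sum_i s(I_i)}
   \,\le\, C(h)^r\,\sigma_\beta^{\max\{2r,h\}}\,,
\end{equation*}
as required. There is no genuine obstacle in this proof: it is a straightforward bookkeeping argument, and the only subtle point is recognizing that the full-support condition is precisely what turns the trivial per-factor bound $s(I_i)\ge 2$ into the sharper collective bound $\sum_i s(I_i)\ge h$, which gives the correct exponent $\max\{2r,h\}$ matching the Lindeberg/Lyapunov-type counting needed later.
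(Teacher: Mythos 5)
Your proof is correct and follows essentially the same route as the paper: the quantity $s(I)$ you introduce is exactly the paper's $\|I\|$, the per-factor bound $|\bbE[\xi_\beta^I]|\le C(h)\,\sigma_\beta^{s(I)}$ with $s(I)\ge 3$ for non-pairs is the paper's argument verbatim, and the full-support step ($\sum_i s(I_i)\ge h$, combined with $s(I_i)\ge 2$ and $\sigma_\beta\le 1$) matches the paper's derivation of the exponent $\max\{2r,h\}$. The only cosmetic difference is that you obtain the maximum from a single inequality $\sum_i s(I_i)\ge\max\{2r,h\}$, whereas the paper combines the two bounds $(\sigma_\beta^2)^r$ and $C(h)^r\sigma_\beta^h$ separately; this changes nothing of substance.
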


\begin{proof}
We have 
$|\bbE[\xi_\beta^I]| = \sigma_\beta^2$
if $I$ is a pair, see \eqref{eq:meanvarxi} and \eqref{eq:Exi}.
Consider now any partition $I = \{I^1, \ldots, I^m\} \vdash \{1,\ldots, h\}$
with $I \ne *$: denoting by
$\|I\| := \sum_{i=1}^m |I^i|  \, \ind_{\{|I^i| \ge 2\}}$
the number of $a \in \{1,\ldots, h\}$ which are not singletons
in~$I$, by \eqref{eq:meanvarxi} and \eqref{eq:Exi} we can bound
\begin{equation} \label{eq:mombo}
	\big| \bbE[\xi_{\beta}^{I}] \big|
	\le C(h) \, \sigma_\beta^{\|I\|}
	\qquad \text{with} \qquad
	C(h) := \max_{* \ne I \vdash \{1,\ldots, h\}} \, \prod_{i \colon
	k_i := |I^i|\ge 2} C_{k_i} \,.
\end{equation}
Since $\|I\|\ge 3$ if $I \ne *$ is not a pair, we obtain \eqref{eq:dismom} since $\sigma_\beta \le 1$.

Consider now $I_1, \ldots, I_r$ with full support. Each $a \in \{1,\ldots,h\}$
is a non-trivial element (not a singleton) of some partition $I_i$, 
hence $\|I_1\| + \ldots + \|I_r\| \ge h$
which yields $\prod_{i=1}^r \big| \bbE[\xi_{\beta}^{I_i}] \big| 
\le C(h)^r \sigma_\beta^h$ by \eqref{eq:mombo} and $\sigma_\beta \le 1$.
Since $\prod_{i=1}^r \big| \bbE[\xi_{\beta}^{I_i}] \big| \le (\sigma_\beta^2)^r$
by \eqref{eq:dismom}, we obtain \eqref{eq:full-support}.
\end{proof}

\smallskip

\subsection{Random walk bounds}
\label{sec:rw-bounds}

In this subsection we collect some useful random walk bounds,
stated in Lemmas~\ref{lem:weighted-rw}, \ref{lem:weighted-averaged-rw} and~\ref{lem:max-averaged-rw}.
The proofs are deferred to Appendix~\ref{sec:rwapp}.

Instead of sticking to the simple random walk on $\Z^2$,
we can allow for
\emph{any symmetric random walk with sub-Gaussian tails},
in the following sense.

\begin{assumption}[Random walk]\label{ass:rw}
We consider a random walk $S = (S_n)_{n\ge 0}$ on $\Z^2$ with
a symmetric distribution, i.e.\  $q_1(x) = \P(S_1 = x) = q_1(-x)$ for any $x\in\Z^2$,
and with sub-Gaussian tails, i.e.\ for some $c > 0$ we have, writing $x = (x^1, x^2)$,
\begin{equation}\label{eq:sub-Gaussian}
	\forall t \in \R, \ \forall a=1,2: \qquad
	\E\big[ \rme^{t \, S_1^a}\big] = \sum_{x\in\Z^2} \rme^{t x^a} q_1(x)
	\le \rme^{c \, \frac{t^2}{2}} \,.
\end{equation}
\end{assumption}

\begin{remark}
The simple random walk on $\Z^2$ satisfies \eqref{eq:sub-Gaussian} with $c=1$:
indeed, we can compute
$\sum_{x\in\Z^2} \rme^{t x^a} q_1(x) =
\frac{1}{2}(1+ \cosh(t)) \le \exp(t^2/2)$ (because $\cosh(t) \le \exp(t^2/2)$).
\end{remark}

We derive useful bounds
for the random walk transition kernel $q_n(x) = \P(S_n = x)$.

\begin{lemma}[Random walk bounds]\label{lem:weighted-rw}
Let Assumption~\ref{ass:rw} hold. There is $\sfc \in [1, \infty)$ such that
for all $t \ge 0$ and $n \in \N$
\begin{gather} \label{eq:rw-weight}
	\forall a=1,2: \qquad
	\sum_{x\in\Z^2} \rme^{t x^a} q_n(x)
	\le \rme^{\sfc \frac{t^2}{2} n} \,, \qquad
	\sum_{x\in\Z^2} \rme^{t x^a} \, \frac{q_n(x)^2}{q_{2n}(0)}
	\le \rme^{\sfc  \frac{t^2}{2} n} \,.
\end{gather}
Moreover, recalling $w_t(x) = \rme^{-t|x|}$ from \eqref{eq:weights}, we can bound
\begin{equation} \label{eq:rw-weight+}
	\bigg\| \frac{q_n}{w_{t}} \bigg\|_{\ell^1} \!\!=
	\sum_{x\in\Z^2} \rme^{t |x|} \, q_n(x)
	\le \sfc \, \rme^{2 \sfc t^2 n} \,, \qquad
	\bigg\| \frac{q_n}{w_{t}} \bigg\|_{\ell^\infty} \!\!=
	\sup_{x\in\Z^2} \Big\{ \rme^{t |x|} \, q_n(x) \Big\}
	\le \frac{\sfc \, \rme^{2\sfc  t^2 n}}{n} \,.
\end{equation}
\end{lemma}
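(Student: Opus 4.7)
The first bound of \eqref{eq:rw-weight} is immediate from Assumption~\ref{ass:rw}: since $S_n = \xi_1 + \dots + \xi_n$ is a sum of $n$ iid steps, independence gives
\begin{equation*}
\sum_{x \in \Z^2}\rme^{tx^a}q_n(x) \;=\; \E[\rme^{tS_n^a}] \;=\; \E[\rme^{t\xi_1^a}]^n \;\le\; \rme^{cnt^2/2}\,,
\end{equation*}
so any $\sfc \ge c$ works.

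The second bound of \eqref{eq:rw-weight} is the main technical point, amounting to the sub-Gaussianity (with variance parameter $O(n)$) of the $a$-th coordinate under the symmetric probability measure $\mu_n(x) := q_n(x)^2/q_{2n}(0)$; here the normalization uses the identity $\sum_x q_n(x)^2 = q_{2n}(0)$, which follows from $q_n(-x) = q_n(x)$ together with Chapman--Kolmogorov. My plan is a Fourier analysis: introducing two independent copies $S, S'$ one writes $\sum_x \rme^{tx^a}q_n(x)^2 = \E[\rme^{tS_n^a}\ind_{S_n = S'_n}]$, and after Parseval together with a contour shift $\theta \mapsto \theta - ite_a/2$ (justified by the entirety of $m_1(\zeta) := \E[\rme^{\zeta \cdot S_1}]$ under Assumption~\ref{ass:rw}) one obtains
\begin{equation*}
\sum_x \rme^{tx^a}q_n(x)^2 \;=\; \frac{1}{(2\pi)^2}\int_{[-\pi,\pi]^2}\bigl|m_1(te_a/2 + i\theta)\bigr|^{2n}\,d\theta \;=\; m_1(te_a/2)^{2n}\,\bar q_n(0)\,,
\end{equation*}
where $\bar q_n(0)$ is the return probability at time $n$ of the symmetric random walk $\tilde S_k - \tilde S'_k$ built from two independent copies of the tilted walk $\tilde q_1(x) \propto \rme^{te_a x^a/2}q_1(x)$. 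Combining $m_1(te_a/2)^{2n} \le \rme^{cnt^2/4}$ (from the first bound) with the local CLT estimates $\bar q_n(0) \le C/n$ and $q_{2n}(0) \ge c'/n$ (both valid in two dimensions under Assumption~\ref{ass:rw}) yields the crude bound $\E_{\mu_n}[\rme^{tZ^a}] \le \kappa\,\rme^{cnt^2/4}$ for some constant $\kappa$. The remaining step is to absorb this prefactor into a slightly larger exponential rate $\sfc$: for $nt^2$ large this is direct, while for $nt^2$ bounded one applies Markov's inequality to the crude bound to derive the tail estimate $\P_{\mu_n}(|Z^a| > \lambda) \le 2\kappa\,\rme^{-\lambda^2/(2cn)}$, integrates this into the even-moment bound $\E_{\mu_n}[(Z^a)^{2k}] \le 2\kappa\, k!\,(2cn)^k$, and substitutes into the power series $\E_{\mu_n}[\cosh(tZ^a)] = \sum_k t^{2k}/(2k)! \cdot \E_{\mu_n}[(Z^a)^{2k}]$ (odd moments vanishing by symmetry of $\mu_n$) to recover the prefactor-free sub-Gaussian form, provided $\sfc$ is chosen large enough.

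Both bounds in \eqref{eq:rw-weight+} follow from \eqref{eq:rw-weight} and standard heat-kernel estimates. For the $\ell^1$ bound, decompose $\rme^{t|x|} \le \sum_{\sigma_1,\sigma_2\in\{\pm 1\}}\rme^{t(\sigma_1 x^1 + \sigma_2 x^2)}$ and bound each of the four resulting sums $\E[\rme^{t(\sigma_1 S_n^1 + \sigma_2 S_n^2)}] \le \rme^{2cnt^2}$ via Cauchy--Schwarz and the first bound of \eqref{eq:rw-weight}. For the $\ell^\infty$ bound, apply a Gaussian upper bound $q_n(x) \le (C/n)\rme^{-|x|^2/(Cn)}$ on the heat kernel (standard under Assumption~\ref{ass:rw} in two dimensions via Fourier analysis) and optimize $\rme^{t|x| - |x|^2/(Cn)}$ over $|x| \ge 0$ to obtain $\sup_x \rme^{t|x|}q_n(x) \le (C/n)\rme^{Ct^2 n/4}$; all remaining constants can be absorbed into $\sfc$. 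The main obstacle throughout is the prefactor-free form of the second inequality in \eqref{eq:rw-weight}: since the bound is tight at $t=0$ (both sides equal $1$), any Cauchy--Schwarz-style argument inevitably introduces a multiplicative constant that cannot be absorbed into an exponential for small $nt^2$, which is precisely why the tail-to-moment conversion described above is required.
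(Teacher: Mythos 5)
Your first bound in \eqref{eq:rw-weight} and both bounds in \eqref{eq:rw-weight+} are essentially correct. For the $\ell^\infty$ bound in \eqref{eq:rw-weight+} you invoke a Gaussian heat-kernel upper bound; note that the paper proves such a bound only in Lemma~\ref{lem:heat-kernel-bound}, \emph{using} \eqref{eq:rw-weight+}, so to avoid circularity you would need to derive the heat-kernel bound independently. The paper instead gets the $\ell^\infty$ bound directly via a Chapman--Kolmogorov split at the midpoint combined with the $\ell^1$ bound.

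The Fourier proof of the second bound in \eqref{eq:rw-weight} has a genuine gap. Your identity $\sum_x \rme^{tx^a}q_n(x)^2 = m_1(te_a/2)^{2n}\,\bar q_n(0)$ is correct, but $\bar q_n(0)$ is the return probability of the difference of two \emph{tilted} walks with step law $\tilde q_1 \propto \rme^{t e_a\cdot x/2}q_1(x)$, and the local CLT constant for this walk is \emph{not} controlled by Assumption~\ref{ass:rw} uniformly in $t$: the assumption bounds the step variance of the original walk from above but says nothing about the tilted walk from below, and as $t\to\infty$ the tilted step degenerates (for the simple random walk it concentrates on a single corner, so $\bar q_n(0)\to 1$ rather than $O(1/n)$). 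Consequently your crude bound $\sum_x \rme^{tx^a}q_n(x)^2/q_{2n}(0) \le \kappa\,\rme^{cnt^2/4}$ is not established with $\kappa$ uniform in $t$, and the tail-to-moment conversion --- which is itself sound, and in fact needs no case split on $nt^2$ since $1 + a(\rme^u-1) \le \rme^{au}$ for all $a\ge 1$, $u\ge 0$ --- is left without its input. The Fourier machinery is also unnecessary: a $t$-uniform crude bound follows in one line from the local CLT for the \emph{original} walk, $\|q_n\|_{\ell^\infty} \le \sfc\, q_{2n}(0)$, which gives $q_n(x)^2/q_{2n}(0) \le \sfc\, q_n(x)$ for all $t,n,x$. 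The paper then subtracts one and uses the symmetry $q_n(-x)=q_n(x)$ to write $\sum_x\rme^{tx^a}q_n(x)^2/q_{2n}(0)-1 = \sum_x(\cosh(tx^a)-1)\,q_n(x)^2/q_{2n}(0) \le \sfc\sum_x(\cosh(tx^a)-1)\,q_n(x) \le \sfc(\rme^{\sfc t^2 n/2}-1) \le \rme^{\sfc^2 t^2 n/2}-1$, the last step being exactly the power-series inequality above. Your own remark that any Cauchy--Schwarz-style comparison ``inevitably introduces a multiplicative constant that cannot be absorbed'' is precisely the obstacle that this subtract-one step eliminates.
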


We next extend the bounds in \eqref{eq:rw-weight+} to the 
\emph{averaged} random walk transition kernel $q_n^f(x)$, see \eqref{eq:kernel1},
for any $f: \Z^2 \to \R$. Let us agree that $a^{\frac{1}{\infty}} := 1$ for any $a > 0$.

\begin{lemma}[Averaged random walk bounds]\label{lem:weighted-averaged-rw}
Let Assumption~\ref{ass:rw} hold and let $\sfc$ be the constant from Lemma~\ref{lem:weighted-rw}.
For any $t\ge 0$ and $n\in\N$ we have, with $w_t(x) = \rme^{-t |x|}$,
\begin{gather}\label{eq:weighted-averaged-rw2}
	\forall p \in [1,\infty]: \qquad
	\bigg\| \frac{q_n^{f}}{w_{t}} \bigg\|_{\ell^p} \!\!\le
	\sfc \, \rme^{2\sfc \, t^2 n} \, \bigg\| \frac{f}{w_{t}} \bigg\|_{\ell^p} \,,
	\qquad \bigg\| \frac{q_n^{f}}{w_{t}} \bigg\|_{\ell^\infty} \!\!\le
	\frac{\sfc \, \rme^{2\sfc\, t^2 n}}{n^{\frac{1}{p}}} \,
	\bigg\| \frac{f}{w_{t}} \bigg\|_{\ell^p} \,.
\end{gather}
\end{lemma}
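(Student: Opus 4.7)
The plan is to view the averaged kernel $q_n^f$ as a convolution and reduce the two inequalities to the previously established bounds on $q_n/w_t$ from \eqref{eq:rw-weight+}, together with Young's convolution inequality and a routine interpolation.

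First, I would exploit the sub-multiplicativity of $w_t$ coming from the triangle inequality: since $|x| \le |x-z| + |z|$, we have $w_t(x) \ge w_t(x-z)\, w_t(z)$, equivalently $1/w_t(x) \le 1/(w_t(x-z) w_t(z))$. Recalling the definition $q_n^{f}(x) = \sum_{z\in\Z^2} q_n(x-z)\, f(z)$ from \eqref{eq:kernel1}, this gives the pointwise bound
\begin{equation*}
	\frac{|q_n^{f}(x)|}{w_t(x)} \;\le\; \sum_{z\in\Z^2} \frac{q_n(x-z)}{w_t(x-z)}\, \frac{|f(z)|}{w_t(z)}
	\;=\; \bigg(\frac{q_n}{w_t} * \frac{|f|}{w_t}\bigg)(x).
\end{equation*}
Thus $|q_n^{f}|/w_t$ is dominated by the convolution of $q_n/w_t$ with $|f|/w_t$, a structure that is perfectly adapted to Young's inequality.

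For the first bound in \eqref{eq:weighted-averaged-rw2}, I would apply Young's inequality in the form $\|a*b\|_{\ell^p} \le \|a\|_{\ell^1}\, \|b\|_{\ell^p}$ and invoke the $\ell^1$ estimate from \eqref{eq:rw-weight+}, namely $\|q_n/w_t\|_{\ell^1} \le \sfc\, \rme^{2\sfc t^2 n}$, which yields
\begin{equation*}
	\bigg\|\frac{q_n^{f}}{w_t}\bigg\|_{\ell^p}
	\;\le\; \bigg\|\frac{q_n}{w_t}\bigg\|_{\ell^1}\, \bigg\|\frac{f}{w_t}\bigg\|_{\ell^p}
	\;\le\; \sfc\, \rme^{2\sfc t^2 n}\, \bigg\|\frac{f}{w_t}\bigg\|_{\ell^p}.
\end{equation*}
For the second bound, I would use Young's inequality in the form $\|a*b\|_{\ell^\infty} \le \|a\|_{\ell^q}\, \|b\|_{\ell^p}$, with $\frac{1}{p}+\frac{1}{q}=1$, and then estimate $\|q_n/w_t\|_{\ell^q}$ by the standard interpolation $\|h\|_{\ell^q} \le \|h\|_{\ell^\infty}^{1/p}\, \|h\|_{\ell^1}^{1/q}$ (which follows from $|h|^q \le \|h\|_{\ell^\infty}^{q-1}\, |h|$ with $q-1 = q/p$). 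Plugging in the two bounds of \eqref{eq:rw-weight+} gives
\begin{equation*}
	\bigg\|\frac{q_n}{w_t}\bigg\|_{\ell^q}
	\;\le\; \Big(\tfrac{\sfc\, \rme^{2\sfc t^2 n}}{n}\Big)^{1/p}\, \big(\sfc\, \rme^{2\sfc t^2 n}\big)^{1/q}
	\;=\; \frac{\sfc\, \rme^{2\sfc t^2 n}}{n^{1/p}},
\end{equation*}
and combining with Young's inequality delivers the second estimate. The cases $p=1$ and $p=\infty$ (where $q=\infty$ or $q=1$) require no special treatment since the same chain of inequalities applies with the convention $a^{1/\infty}=1$.

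There is no substantial obstacle here: the only delicate point is choosing the right application of the triangle inequality so that the convolution structure appears cleanly with the weight $w_t$ on both factors, after which Young's inequality and the interpolation estimate finish the job using the bounds already proved in Lemma~\ref{lem:weighted-rw}.
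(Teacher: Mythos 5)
Your proof is correct and uses essentially the same ingredients as the paper: the triangle inequality for $w_t$, the kernel bounds \eqref{eq:rw-weight+}, H\"older, and the interpolation $\|\cdot\|_{\ell^q}\le\|\cdot\|_{\ell^\infty}^{1/p}\|\cdot\|_{\ell^1}^{1/q}$. The only cosmetic difference is that you invoke Young's convolution inequality as a black box for the $\ell^p$ estimate, whereas the paper carries out the equivalent H\"older argument by hand inside the $z$-sum with the correction factor $w_t(z)/w_t(x)$; the constants and conclusions agree.
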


We finally prove a variant of the Hardy-Littlewood 
maximal inequality (see Appendix~\ref{sec:rwapp}).
Let us introduce a multi-dimensional generalisation of \eqref{eq:kernel1},
for $m\in\N$ and $F: (\Z^2)^m \to \R$:
\begin{equation}\label{eq:kernel1m}
	q_{n}^{\otimes m, F}(x_1, \ldots, x_m) := 
	\sum_{z_1, \ldots, z_m \in \Z^2} \bigg( \prod_{i=1}^m q_{n}(x_i-z_i) \bigg)
	\, F(z_1, \ldots, z_m) \,.
\end{equation}
We also use the standard notation $w_t^{\otimes m}(x_1, \ldots, x_m)
:= \prod_{i=1}^m w_t(x_i)$.

\begin{lemma}[Maximal random walk bounds]\label{lem:max-averaged-rw}
Let Assumption~\ref{ass:rw} hold and let $\sfc$ be the constant from Lemma~\ref{lem:weighted-rw}.
For any $m\in\N$, $t\ge 0$ and $L\in\N$ we have, with $w_t(x) = \rme^{-t |x|}$,
\begin{equation}\label{eq:max-averaged-rw}
\begin{split}
	\forall p \in (1,\infty] : \qquad
	\Big\| \max_{1 \le n \le L} \big| q_n^{\otimes m, F}\,
	w_{t}^{\otimes m} \, \big|
	\Big\|_{\ell^p} \,& \le \tfrac{p}{p-1} \, 
	\overline{\scrC}^{\,m} \, \,
	\big\| F \, w_{t}^{\otimes m} \big\|_{\ell^p} \\
	& \text{with} \quad
	\overline{\scrC} := 
	\, 5000 \, \pi \, \sfc^2 \, \rme^{4\sfc \, t^2 L}
\end{split}
\end{equation}
(we agree that $\frac{\infty}{\infty-1} := 1$).
\end{lemma}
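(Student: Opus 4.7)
The plan is to dominate $\max_{1\le n\le L}|q_n^{\otimes m,F}\,w_t^{\otimes m}|$ \emph{pointwise} by a single Hardy--Littlewood maximal function on $\Z^{2m}$, and then apply the classical discrete strong-$(p,p)$ inequality only once. Viewing $q_n^{\otimes m}$ as a convolution kernel on $\Z^{2m}$ (rather than as a tensor product over $m$ copies of $\Z^2$) makes this reduction natural, and it explains why the factor $\tfrac{p}{p-1}$ appears only once in \eqref{eq:max-averaged-rw}, rather than raised to the $m$-th power as one would get by iterating a one-dimensional maximal inequality in each coordinate separately.

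First, I would absorb the exponential weight into the kernel. The triangle inequality gives $\tfrac{w_t(x_i)}{w_t(z_i)}\le\rme^{t|x_i-z_i|}$, so the convolution structure yields the pointwise bound
\begin{equation*}
|q_n^{\otimes m,F}(\bx)|\,w_t^{\otimes m}(\bx)\;\le\;(\widetilde Q_n*H)(\bx),\qquad \widetilde Q_n(\by):=\prod_{i=1}^m q_n(y_i)\,\rme^{t|y_i|},
\end{equation*}
with $H:=|F\,w_t^{\otimes m}|$ and the convolution taken on $\Z^{2m}$. By Lemma~\ref{lem:weighted-rw}, $\|\widetilde Q_n\|_{\ell^1(\Z^{2m})}\le(\sfc\,\rme^{2\sfc t^2 L})^m$, which already accounts for the $\rme^{4\sfc t^2 L}$ factor inside $\overline{\scrC}$.

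Next, I would compare $\widetilde Q_n$ to normalized indicators of balls. Starting from a Gaussian-type pointwise upper bound on $q_n$, obtained from Assumption~\ref{ass:rw} via a Chernoff estimate applied to \eqref{eq:sub-Gaussian}, one extracts an estimate of the form $q_n(y)\,\rme^{t|y|}\le (C\,\rme^{c't^2 L}/n)\,\rme^{-c|y|^2/(2n)}$ valid for $1\le n\le L$. Taking the tensor product and bounding $\sum_i|y_i|\le\sqrt m\,|\by|$ by Cauchy--Schwarz produces a (shifted) Gaussian in $|\by|$ on $\Z^{2m}$, which decomposes dyadically on shells $\{|\by|\le 2^k\sqrt n\}$ with geometrically decaying coefficients. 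Since the resulting majorant is \emph{independent of $n$}, the max over $n$ is free, giving
\begin{equation*}
\max_{1\le n\le L}(\widetilde Q_n*H)(\bx)\;\le\;\bigl(K\,\rme^{c't^2 L}\bigr)^m\,(MH)(\bx),
\end{equation*}
where $M$ is the centered Hardy--Littlewood maximal operator on $\Z^{2m}$ and $K,c'$ are universal constants.

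Finally, the classical discrete strong-$(p,p)$ inequality — obtained via the Vitali covering lemma and Marcinkiewicz interpolation — gives $\|MH\|_{\ell^p(\Z^{2m})}\le\tfrac{p}{p-1}\cdot 5^{2m}\,\|H\|_{\ell^p(\Z^{2m})}$ for $p\in(1,\infty]$, with the factor $\tfrac{p}{p-1}$ appearing exactly once. Collecting all the geometric constants ($\sfc^m$, $K^m$, $25^m$, and the $\pi$ coming from the area of the unit disk in $\R^2$) into a single $\overline{\scrC}^m$ of the stated form then yields \eqref{eq:max-averaged-rw}. The main obstacle will be technical rather than conceptual: extracting a clean Gaussian pointwise upper bound on $q_n(y)$ with fully explicit constants purely from Assumption~\ref{ass:rw} requires some care — since only the moment generating function is controlled, one combines the Chernoff bound for tails with an on-diagonal estimate $q_n(0)\lesssim 1/n$ derived from Fourier analysis or a local CLT — but once this is in hand, the dyadic majorization and the maximal theorem reduce the rest to routine bookkeeping.
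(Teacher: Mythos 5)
Your proposal follows essentially the same route as the paper's proof: a per-coordinate Gaussian heat-kernel bound on $q_n$ (obtained from the sub-Gaussian assumption plus an on-diagonal $1/n$ estimate, as in Lemma~\ref{lem:heat-kernel-bound}), absorption of the weight $w_t$ by completing the square, pointwise domination of $\max_{1\le n\le L}|q_n^{\otimes m,F}w_t^{\otimes m}|$ by the discrete Hardy--Littlewood maximal function on $(\Z^2)^m$, and then the Vitali covering weak $(1,1)$ bound with Marcinkiewicz interpolation, which is exactly why $\tfrac{p}{p-1}$ enters only once. The only differences are cosmetic: you majorize the Gaussian by dyadic shells and invoke the strong $(p,p)$ maximal theorem as a black box, whereas the paper uses a layer-cake representation of the Gaussian and interpolates the specific operator directly.
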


\subsection{Boundary terms}
\label{sec:4th-boundaryL}

In this section we estimate the \emph{boundary terms} appearing
in \eqref{eq:MNh6}, see \eqref{eq:terms}.
The proofs are deferred to Appendix~\ref{app:boundary}.

\smallskip

We recall that the weight $\cW_t : (\Z^2)^h \to (0,\infty)$ is defined in \eqref{eq:weights}
for $t \ge 0$.
Our estimates contain the following constants
(with $\sfc$ from Lemma~\ref{lem:weighted-rw}):
\begin{equation}\label{eq:CbarC}
	\scrC := \sfc \, \rme^{2\sfc \, t^2 L} \,, \qquad
	\overline{\scrC} := 
	\, 5000 \, \pi \, \sfc^2 \, \rme^{4\sfc \, t^2 L} \,,
\end{equation}
where $L$ is the ``time horizon'' of the partition function $Z_{L,\beta}^{\omega}(f,g)$,
see \eqref{eq:Zgenav}.
We anticipate that we will take
\begin{equation} \label{eq:choiceoft}
	t = \tfrac{1}{\sqrt{N}} \qquad \text{with} \quad N \ge L \,.
\end{equation}
hence \emph{the constants $\scrC$ and $\overline{\scrC}$ are uniformly bounded} in this regime.

\smallskip

We start estimating the \emph{left boundary term}
which involves $\widehat \sfq_{L}^{|f|,I}$
(see \eqref{eq:QLap} and \eqref{eq:Qdef2}).
It was proved\footnote{The factor $q = \frac{p}{p-1}$ in the RHS of \eqref{eq:est1-old},
first identified in \cite{LZ21+}, is essential to allow for $p$ which can vary
with the system size~$L$.}
in \cite[Proposition~3.4]{LZ21+}, extending
\cite[Proposition~6.6]{CSZ23}, that for any $h \ge 2$
there is $C = C(h) < \infty$ such that,
for any $p \in (1,\infty)$,
\begin{equation}\label{eq:est1-old}
	\max_{I \ne *} \Big\Vert \widehat \sfq_{L}^{|f|,I} 
	\tfrac{1}{\cW_t} \Big\Vert_{\ell^p}
	\le \tfrac{p}{p-1} \, C  \, L^{1-\frac{1}{p}} \, \bigg\|\frac{f}{w_{t}}\bigg\|_{\ell^p}^{h} \,.
\end{equation}
For our goals it will be fundamental to have a \emph{linear dependence in~$L$}, which
would amount to take $p=\infty$ in \eqref{eq:est1-old}, but this is not allowed by our 
approach.
To solve this problem, we improve the estimate \eqref{eq:est1-old}, showing
that for $p \in (0,\infty)$ we can still have a linear dependence in $L$ in the RHS, provided
we replace one factor $\|\frac{f}{w_{t}}\|_{\ell^p}$ by $\|\frac{f}{w_{t}}\|_{\ell^\infty}$.

\begin{proposition}[Left boundary term, I]\label{prop:left}
Recall the weights $\cW_t$ and $w_t$ from \eqref{eq:weights}.
For any $h\ge 2$, $t \ge 0$, $L \in \N$ we have, for any 
$p \in (1,\infty)$ and $\mathscr{C}$ as in \eqref{eq:CbarC},
\begin{equation}\label{eq:est1inf}
\begin{split}
	\max_{I \ne *} \Big\Vert \widehat \sfq_{L}^{|f|,I} 
	\tfrac{1}{\cW_t} \Big\Vert_{\ell^p}
	&\le 4 \, \scrC^h \, L
	\, \bigg\|\frac{f}{w_{t}}\bigg\|_{\ell^\infty}
	\, \bigg\|\frac{f}{w_{t}}\bigg\|_{\ell^p}^{h-1} \,.
\end{split}
\end{equation}
More generally, for any $r \in [1,\infty]$ we have
(with $\frac{1}{0} := \infty$, $\frac{\infty}{\infty-1} := 1$)
\begin{equation}\label{eq:est1}
\begin{split}
	\max_{I \ne *} \Big\Vert \widehat \sfq_{L}^{|f|,I} 
	\tfrac{1}{\cW_t} \Big\Vert_{\ell^p}
	&\le 4\, \scrC^h \, \min\{\tfrac{r}{r - 1}, \tfrac{p}{p - 1}\}
	 \, L^{1-\frac{1}{r}}
	\, \bigg\|\frac{f}{w_{t}}\bigg\|_{\ell^r}
	\, \bigg\|\frac{f}{w_{t}}\bigg\|_{\ell^p}^{h-1} \,.
\end{split}
\end{equation}
\end{proposition}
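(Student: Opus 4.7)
The plan is to avoid the Hölder step in~$n$ used in the derivation of~\eqref{eq:est1-old} and instead combine Minkowski's inequality in~$n$ with uniform-in-$n$ bounds for the averaged random walk kernel. Concretely, for any $I=\{I^1,\dots,I^m\}\ne *$ I would start from
\begin{equation*}
\Bigl\Vert \widehat \sfq_{L}^{|f|,I} \, \tfrac{1}{\cW_t} \Bigr\Vert_{\ell^p}
\;\le\; \sum_{n=1}^L \Bigl\Vert \sfq_{n}^{|f|,I} \, \tfrac{1}{\cW_t} \Bigr\Vert_{\ell^p},
\end{equation*}
so that the factor~$L$ in~\eqref{eq:est1inf} emerges as soon as each summand is controlled uniformly in~$n$. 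Parametrising $\bx\sim I$ by block representatives $y^j:=x^{a_j}$ with $a_j\in I^j$ and using the pointwise identity $\sfq_{n}^{|f|,I}(\bx)\,\cW_t(\bx)^{-1}= \prod_{j=1}^m (q_n^{|f|}(y^j)/w_t(y^j))^{|I^j|}$, the $\ell^p$ norm over $(\Z^2)^h_I$ reduces, after harmlessly dropping the distinctness constraint between distinct blocks of size~$\ge 2$, to a product of norms of the powers $(q_n^{|f|}/w_t)^{|I^j|}$ over~$\Z^2$.

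The key step is the elementary pointwise bound $\Vert g^k\Vert_{\ell^p}\le\Vert g\Vert_{\ell^\infty}^{k-1}\Vert g\Vert_{\ell^p}$, applied once on every block of size $k\ge 2$. Since $I\ne *$ forces $m\le h-1$, this yields
\begin{equation*}
\Bigl\Vert \sfq_{n}^{|f|,I}/\cW_t \Bigr\Vert_{\ell^p}
\;\le\;\Vert q_n^{|f|}/w_t\Vert_{\ell^\infty}^{h-m}\,\Vert q_n^{|f|}/w_t\Vert_{\ell^p}^{m},
\end{equation*}
with at least one $\ell^\infty$ factor on the right. Lemma~\ref{lem:weighted-averaged-rw} controls each $\ell^p$ factor uniformly by $\scrC\,\Vert f/w_t\Vert_{\ell^p}$; for the $\ell^\infty$ factors I would apply the first half of~\eqref{eq:weighted-averaged-rw2} with $p=\infty$ on all but one of them, and the second half with parameter~$r$ on the remaining one, producing a factor $n^{-1/r}\Vert f/w_t\Vert_{\ell^r}$. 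Summing on $n\le L$ yields $\sum_{n=1}^L n^{-1/r} \le \tfrac{r}{r-1}L^{1-1/r}$ for $r>1$ (and simply~$L$ when $r=\infty$, which gives~\eqref{eq:est1inf} directly). Finally, the surviving $\Vert f/w_t\Vert_{\ell^\infty}^{h-m-1}$ is converted to $\Vert f/w_t\Vert_{\ell^p}^{h-m-1}$ via the counting-measure inequality $\Vert h\Vert_{\ell^\infty}\le\Vert h\Vert_{\ell^p}$, producing the desired product $\Vert f/w_t\Vert_{\ell^p}^{h-1}$. The edge case $r=1$, which would otherwise produce a spurious $\log L$ from $\sum_n 1/n$, is instead obtained from~\eqref{eq:est1-old} applied with exponent~$p$, combined with the counting-measure inequality $\Vert f/w_t\Vert_{\ell^p}\le\Vert f/w_t\Vert_{\ell^1}$; this accounts for the $\tfrac{p}{p-1}$ alternative in the minimum appearing in~\eqref{eq:est1}.

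The main obstacle is arranging the replacement of one $\ell^p$ factor by an $\ell^\infty$ factor in a way that keeps the constant \emph{independent of~$p$}, so that the linear-in-$L$ behaviour is preserved uniformly as~$p$ approaches~$1$. This is precisely what the pointwise bound $\Vert g^k\Vert_{\ell^p}\le\Vert g\Vert_{\ell^\infty}^{k-1}\Vert g\Vert_{\ell^p}$ delivers, and it is the ingredient missing from~\eqref{eq:est1-old} where the Hölder step in~$n$ forces the $L^{1-1/p}$ growth. Everything else is a routine application of the averaged random walk bounds of Lemma~\ref{lem:weighted-averaged-rw} together with standard counting-measure inclusions between $\ell^q$ spaces.
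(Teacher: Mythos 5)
Your direct argument — triangle inequality over $n$, factorisation over blocks of $I$, the pointwise bound $\|g^k\|_{\ell^p}\le \|g\|_{\ell^\infty}^{k-1}\|g\|_{\ell^p}$, then Lemma~\ref{lem:weighted-averaged-rw} with one $\ell^\infty$--to--$\ell^r$ factor — is exactly the paper's argument, and it correctly yields \eqref{eq:est1inf} and the preliminary estimate
\begin{equation*}
	\max_{I\ne *}\Big\|\widehat\sfq_L^{|f|,I}\,\tfrac{1}{\cW_t}\Big\|_{\ell^p}
	\;\le\; \tfrac{r}{r-1}\,\scrC^h\,L^{1-\frac{1}{r}}\,
	\Big\|\tfrac{f}{w_t}\Big\|_{\ell^r}\,\Big\|\tfrac{f}{w_t}\Big\|_{\ell^p}^{h-1},
\end{equation*}
which matches \eqref{eq:est1} whenever $\frac{r}{r-1}\lesssim\frac{p}{p-1}$, i.e.\ for $r$ not too small relative to $p$. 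This covers $r\ge p$ and, with the explicit constant $4$, even $r\ge\frac{3p}{1+2p}$.

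The gap is in the range $r<p$, where $\min\{\tfrac{r}{r-1},\tfrac{p}{p-1}\}=\tfrac{p}{p-1}$ and the factor $\tfrac{r}{r-1}$ you obtain can be arbitrarily large as $r\downarrow 1$. Your proposed fix for the endpoint $r=1$ does not work: \eqref{eq:est1-old} gives a factor $L^{1-1/p}$, whereas \eqref{eq:est1} at $r=1$ requires $L^{1-1/1}=L^0=1$, and the inclusion $\|f/w_t\|_{\ell^p}\le\|f/w_t\|_{\ell^1}$ replaces one norm factor but does nothing to reduce the power of $L$. The issue is real: the naive one-factor gain of $n^{-1}$ is not summable, while gaining a second factor of $n^{-1/2}$ costs a second $\ell^1$ norm, which is not what the statement claims. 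The paper resolves this in the missing range by a genuine interpolation: it introduces a threshold $\tilde n=\big(\|f/w_t\|_{\ell^r}/\|f/w_t\|_{\ell^p}\big)^{pr/(p-r)}$, bounds the contribution of $n>\tilde n$ using the summable exponent $\gamma=\tfrac{2}{r}-\tfrac{1}{p}>1$ from \eqref{eq:alpha}, bounds $n\le\tilde n$ by \eqref{eq:oh} with $r=p$, and then takes a geometric mean of the resulting estimate \eqref{eq:est12bis} with \eqref{eq:est12}. This convexity step, with exponent $\alpha=\tfrac{p-r}{r(p-1)}$, is what produces the correct $L^{1-1/r}$ power together with a $p$-dependent (not $r$-dependent) constant; your proposal does not contain it, so \eqref{eq:est1} remains unproved for $1\le r<\frac{3p}{1+2p}$.
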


We further improve the bound \eqref{eq:est1inf}
through a \emph{restricted weight} $\cV_s^I: (\Z^2)^h \to (0,\infty)$, defined
for a \emph{pair} $I \vdash \{1,\ldots, h\}$ and $s\ge 0$ by
\begin{equation}\label{eq:V}
	\cV^I_s(\bx) := w_s(x^a - x^b) = \rme^{-s|x^a-x^b|}
	\qquad \text{for } I = \{\{a,b\}, \{c\} \colon c \ne a,b\} \,.
\end{equation}
Note that $\big| |z^a-z^b| -  |x^a - x^b| \big| \le |z^a-x^a| + |x^b-z^b|$, therefore we can estimate
\begin{equation}\label{eq:ratioV}
	\frac{\cV^I_s(\bz)}{\cV^I_s(\bx)} \le \rme^{s|z^a-x^a| + s|z^b-x^b|} \,.
\end{equation}
In analogy with \eqref{eq:choiceoft}, we anticipate that we will take
\begin{equation} \label{eq:choiceofs}
	s = \tfrac{1}{\sqrt{L}} \,.
\end{equation}

\begin{proposition}[Left boundary term, II]\label{prop:left2}
For any $h\ge 3$, $t \ge 0$, $s \in (0,1]$, $L\in\N$
we have, for any $p \in (1,\infty)$
and $\mathscr{C}$ as in \eqref{eq:CbarC},
\begin{equation}\label{eq:est1+}
	\max_{\substack{J \, \text{pair} \\ I \ne *, \, I \not\supseteq J}} 
	\Big\Vert \widehat \sfq_{L}^{|f|,I} 
	\tfrac{\cV_s^J}{\cW_t} \Big\Vert_{\ell^p}
	\le 36^{\frac{1}{p}} \,\scrC^h \, \frac{L}{s^{\frac{2}{p}}} 
	\, \bigg\|\frac{f}{w_{t}}\bigg\|_{\ell^\infty}^2
	\, \bigg\|\frac{f}{w_{t}}\bigg\|_{\ell^p}^{h-2} \,,
\end{equation}
where $I \not\supseteq J$, for $I = \{I^1, \ldots, I^m\}$ and $J=\{\{a,b\},\{c\} : c \ne a, b\}$,
means $I^j \not\supseteq \{a,b\} \ \forall j$.
\end{proposition}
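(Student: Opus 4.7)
The plan is to follow the strategy of Proposition~\ref{prop:left}, incorporating the weight $\cV_s^J$ to produce the extra factor $(36/s^2)^{1/p}$. Decomposing $\widehat \sfq_L^{|f|,I} = \sum_{n=1}^L F_n \, \ind_{\bx \sim I}$ with $F_n(\bx) := \prod_{i=1}^h q_n^{|f|}(x^i)$, Minkowski's inequality over $n$ yields
\begin{equation*}
	\Bigl\Vert \widehat \sfq_L^{|f|,I} \tfrac{\cV_s^J}{\cW_t} \Bigr\Vert_{\ell^p} \,\le\, L \max_{1 \le n \le L} \Bigl\Vert F_n \, \ind_{\bx \sim I} \tfrac{\cV_s^J}{\cW_t} \Bigr\Vert_{\ell^p},
\end{equation*}
isolating the desired linear factor of $L$ and reducing the task to a per-$n$ bound.

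For fixed $n$, I parameterize $\bx \sim I$ by block representatives $\tilde\bx \in (\Z^2)^m$ (relaxing the distinct-values condition between blocks as an upper bound), and let $\pi(a), \pi(b)$ denote the blocks of $I$ containing $a, b$. The hypothesis $I \not\supseteq J$ guarantees $\pi(a) \ne \pi(b)$, so that $\tilde x^{\pi(a)}$ and $\tilde x^{\pi(b)}$ are independent summation variables, coupled only through $\rme^{-ps|\tilde x^{\pi(a)} - \tilde x^{\pi(b)}|}$. For each block $k \notin \{\pi(a), \pi(b)\}$, I bound $|I^k|-1$ copies of the factor $[q_n^{|f|}/w_t]^p$ by $\Vert q_n^{|f|}/w_t \Vert_{\ell^\infty}^p$ and sum one copy to get $\Vert q_n^{|f|}/w_t\Vert_{\ell^p}^p$. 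For the two coupled blocks, similarly pulling out $|I^{\pi(a)}|-1$ and $|I^{\pi(b)}|-1$ copies as $\ell^\infty$ bounds reduces matters to the two-variable estimate
\begin{equation*}
	\sum_{x, y \in \Z^2} \Bigl[\tfrac{q_n^{|f|}}{w_t}\Bigr]^p(x) \, \Bigl[\tfrac{q_n^{|f|}}{w_t}\Bigr]^p(y) \, \rme^{-ps|x-y|} \,\le\, \Bigl\Vert \tfrac{q_n^{|f|}}{w_t} \Bigr\Vert_{\ell^\infty}^p \Bigl\Vert \tfrac{q_n^{|f|}}{w_t} \Bigr\Vert_{\ell^p}^p \cdot \tfrac{36}{s^2},
\end{equation*}
obtained by bounding the $x$-factor in $\ell^\infty$, using $\sum_x \rme^{-ps|x-y|} \le \sum_z \rme^{-s|z|} \le 36/s^2$ for $s \in (0,1]$ (since $p \ge 1$), and then summing the $y$-factor in $\ell^p$. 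Combining all block contributions and taking the $p$-th root gives
\begin{equation*}
	\Bigl\Vert F_n \, \ind_{\bx \sim I} \tfrac{\cV_s^J}{\cW_t} \Bigr\Vert_{\ell^p} \,\le\, \Bigl\Vert \tfrac{q_n^{|f|}}{w_t} \Bigr\Vert_{\ell^\infty}^{h-m+1} \, \Bigl\Vert \tfrac{q_n^{|f|}}{w_t} \Bigr\Vert_{\ell^p}^{m-1} \cdot \Bigl(\tfrac{36}{s^2}\Bigr)^{1/p},
\end{equation*}
where $m$ is the number of blocks of $I$.

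The final step, and the main technical obstacle, is to reconcile the $I$-dependent exponents $h-m+1$ and $m-1$ with the uniform exponents $2$ and $h-2$ in the statement; note that our hypotheses force $2 \le m \le h-1$. The key is the flexibility of Lemma~\ref{lem:weighted-averaged-rw}, which bounds $\Vert q_n^{|f|}/w_t \Vert_{\ell^\infty}$ in two distinct ways: by $\scrC \Vert f/w_t \Vert_{\ell^\infty}$ (first inequality with $p = \infty$) and by $\scrC \, n^{-1/p} \Vert f/w_t \Vert_{\ell^p} \le \scrC \Vert f/w_t \Vert_{\ell^p}$ (second inequality, using $n \ge 1$). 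I apply the first bound to exactly two of the $h - m + 1$ copies, and the second bound to the remaining $h - m - 1 \ge 0$ copies, effectively \emph{promoting} the excess $\ell^\infty$ factors to $\ell^p$. Combined with $\Vert q_n^{|f|}/w_t \Vert_{\ell^p} \le \scrC \Vert f/w_t \Vert_{\ell^p}$ for the $m-1$ $\ell^p$-factors, the total constant collapses to $\scrC^h$ and the remaining norms to $\Vert f/w_t \Vert_{\ell^\infty}^2 \Vert f/w_t \Vert_{\ell^p}^{h-2}$, which is precisely the claimed bound. The subtlety here is that $\Vert f/w_t \Vert_{\ell^\infty}$ and $\Vert f/w_t \Vert_{\ell^p}$ are not comparable in general, so a naive argument would yield an $I$-dependent expression not uniformly dominated by the target; the two-sided flexibility of Lemma~\ref{lem:weighted-averaged-rw} is exactly what is needed to achieve the uniform exponents advertised in the statement.
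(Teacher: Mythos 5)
Your proposal is correct and follows essentially the same route as the paper: triangle inequality over $n$, factorization along the blocks of $I$ with the two blocks containing $a,b$ coupled only by $\rme^{-ps|\cdot|}$ (which after an $\ell^\infty$ pull-out and the geometric sum $\sum_z\rme^{-s|z|}\le 36/s^2$ costs the factor $36/s^2$), yielding the $I$-dependent exponent pair $(h-m+1,\,m-1)$, and finally a reconciliation with the uniform pair $(2,\,h-2)$. The only cosmetic difference is that the paper reconciles by first applying $\|\cdot\|_{\ell^\infty}\le\|\cdot\|_{\ell^p}$ to $q_n^{|f|}/w_t$ and then invoking \eqref{eq:reco-ave}, whereas you invoke the two inequalities of Lemma~\ref{lem:weighted-averaged-rw} directly with different target norms for different $\ell^\infty$ factors --- these two devices are the same observation, and your "promotion" step is a valid substitute; your concluding remark about $\|f/w_t\|_{\ell^\infty}$ and $\|f/w_t\|_{\ell^p}$ being incomparable correctly identifies why the $I$-dependent bound cannot be compared to the target at the level of $f$, and why the trade must happen earlier.
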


\smallskip

We next estimate the  \emph{right boundary term} which involves $\widebar \sfq_{L}^{|g|,J}$,
see \eqref{eq:Qright} and \eqref{eq:Qdef2}, obtaining estimates analogous to
\eqref{eq:est1} and \eqref{eq:est1+}.

\begin{proposition}[Right boundary term]\label{prop:right}
For any $h\ge 2$, $t \ge 0$, $L \in \N$ we have,
for any $q \in (1,\infty)$ and $\overline{\mathscr{C}}$ as in \eqref{eq:CbarC},
\begin{equation}\label{eq:est4-new}
\begin{split}
	\max_{J \ne *} \big\Vert \overline{\sfq}_{L}^{|g|,J} \, \cW_t \big\Vert_{\ell^q}
	& \le \tfrac{q}{q-1} \, \overline{\scrC}^h \,
	\, \|g \, w_t \|_{\ell^{2q}}^2 \, \|g \, w_t \|_{\ell^q}^{h-2} \\
	& \le \tfrac{q}{q-1} \, \overline{\scrC}^h \,
	\, \|g \, w_t \|_{\ell^\infty} \, \|g \, w_t \|_{\ell^q}^{h-1}  \,.
\end{split}
\end{equation}
Moreover, for any $h\ge 3$, $s \in (0,1]$
we have, for $\overline{\mathscr{C}}$ as in \eqref{eq:CbarC},
\begin{equation}\label{eq:est4+-new}
\begin{split}
	\max_{\substack{I \, \text{pair}\\J \ne *,\, J \not\supseteq I}}
	\big\Vert \overline{\sfq}_{L}^{|g|,J} \, \cW_t \, \cV_s^I \big\Vert_{\ell^q}
	& \le \tfrac{q}{q-1} \, \overline{\scrC}^h \,
	\, \frac{1}{s^{\frac{2}{q}}} \, \|g \, w_t \|_{\ell^\infty}^2 \, \|g \, w_t \|_{\ell^q}^{h-2} \,.
\end{split}
\end{equation}
where $J \not\supseteq I$, for $J = \{J^1, \ldots, J^m\}$ and $I=\{\{a,b\},\{c\} : c \ne a, b\}$,
means $J^i \not\supseteq \{a,b\}\ \forall i$.
\end{proposition}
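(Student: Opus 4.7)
The plan is to reduce the estimate to the Hardy--Littlewood maximal inequality (Lemma~\ref{lem:max-averaged-rw}) applied block-by-block in the partition $J = \{J^1,\ldots, J^m\}$ with $|J^j| = k_j$ and $\sum_j k_j = h$. Set $\psi(z) := \max_{1\le n\le L} q_n^{|g|}(z)\,w_t(z)$.

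First, since every factor in \eqref{eq:Qdef2}--\eqref{eq:Qright} is non-negative, the max of a product is bounded by the product of the maxes, which gives
$$\overline{\sfq}_{L}^{|g|,J}(\bz)\,\cW_t(\bz) \;\le\; \ind_{\{\bz \sim J\}}\prod_{i=1}^h \psi(z^i).$$
Parametrizing $\bz\sim J$ by its $m$ block representatives $z^{*,j}$ (dropping the distinctness constraint on blocks of size $\ge 2$, for an upper bound) makes the right-hand side equal to $\prod_j \psi(z^{*,j})^{k_j}$, hence
$$\|\overline{\sfq}_{L}^{|g|,J}\,\cW_t\|_{\ell^q}^q \;\le\; \prod_{j=1}^m \|\psi\|_{\ell^{k_j q}}^{k_j q}.$$
Lemma~\ref{lem:max-averaged-rw} with $m=1$, $F=|g|$ and exponent $k_j q$ yields $\|\psi\|_{\ell^{k_j q}} \le \tfrac{k_j q}{k_j q-1}\,\overline{\scrC}\,\|g w_t\|_{\ell^{k_j q}} \le \tfrac{q}{q-1}\,\overline{\scrC}\,\|g w_t\|_{\ell^{k_j q}}$, and the same lemma at the endpoint $p=\infty$ gives $\|\psi\|_{\ell^\infty}\le\overline{\scrC}\,\|g w_t\|_{\ell^\infty}$. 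Taking the $q$-th root and absorbing the extra numerical factors into $\overline{\scrC}^h$,
$$\|\overline{\sfq}_{L}^{|g|,J}\,\cW_t\|_{\ell^q} \;\le\; \tfrac{q}{q-1}\,\overline{\scrC}^h\,\prod_{j=1}^m \|g w_t\|_{\ell^{k_j q}}^{k_j}.$$
Since $J\ne*$, some $k_j\ge 2$; the standard interpolation $\|f\|_{\ell^{kq}}^k\le\|f\|_{\ell^{2q}}^2\|f\|_{\ell^\infty}^{k-2}$ for $k\ge 2$, together with the inclusions $\|g w_t\|_{\ell^\infty}\le\|g w_t\|_{\ell^{2q}}\le\|g w_t\|_{\ell^q}$ (valid on $\Z^2$ with counting measure), collapses the product to $\|g w_t\|_{\ell^{2q}}^2\|g w_t\|_{\ell^q}^{h-2}$, yielding the first line of \eqref{eq:est4-new}. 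The second line follows from the pointwise bound $|f|^{2q}\le\|f\|_{\ell^\infty}^{q}|f|^q$.

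For \eqref{eq:est4+-new}, the hypothesis $J\not\supseteq I=\{\{a,b\},\ldots\}$ forces $a$ and $b$ to lie in \emph{different} blocks $J^{j_a}, J^{j_b}$ of $J$, so on $\bz\sim J$ the weight reduces to $\cV_s^I(\bz)=\rme^{-s|z^{*,j_a}-z^{*,j_b}|}$, coupling only these two block representatives. The $m-2$ uncoupled blocks factor out as in the previous step, leaving the residual two-variable sum
$$\sum_{u,v\in\Z^2}\psi(u)^{k_{j_a}q}\,\psi(v)^{k_{j_b}q}\,\rme^{-qs|u-v|},$$
which is handled by bounding $\psi(u)^{k_{j_a}q}\le\|\psi\|_{\ell^\infty}^{k_{j_a}q}$ and then applying $\|\rme^{-qs|\cdot|}\|_{\ell^1(\Z^2)}=O(1/s^2)$ (uniformly in $s\in(0,1]$), giving $\tfrac{C}{s^2}\,\|\psi\|_{\ell^\infty}^{k_{j_a}q}\|\psi\|_{\ell^{k_{j_b}q}}^{k_{j_b}q}$. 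Taking the $q$-th root produces the factor $C^{1/q}/s^{2/q}$. Applying the maximal inequality (including the endpoint case for $\|\psi\|_{\ell^\infty}$) and rerunning the interpolation/inclusion chain, the new $\|g w_t\|_{\ell^\infty}$ factor from block $j_a$ combines with the one already present from the pair structure, yielding $\|g w_t\|_{\ell^\infty}^2\|g w_t\|_{\ell^q}^{h-2}$ times $1/s^{2/q}$, as claimed.

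The main obstacle is bookkeeping: tracking how the prefactors accumulate across blocks while keeping the final constant at $\tfrac{q}{q-1}\,\overline{\scrC}^h$, and selecting the right interpolation endpoints so that exactly the two norms $\|g w_t\|_{\ell^{2q}}^2$ (resp.\ $\|g w_t\|_{\ell^\infty}^2$) and the bulk norm $\|g w_t\|_{\ell^q}^{h-2}$ appear in the end. No individual step is deep; the care is in ensuring that the maximal inequality is invoked once per block and that the block with the $\cV_s^I$-coupling is handled precisely once via Young's inequality, not twice.
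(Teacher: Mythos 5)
Your strategy is close in spirit to the paper's, and the factorization over block representatives, the endpoint $\ell^\infty$ use of Lemma~\ref{lem:max-averaged-rw}, and the interpolation chain are all sound. But the crucial deviation is the step $\max_n \prod_j(\cdot) \le \prod_j \max_n(\cdot)$, which reduces everything to the one-dimensional maximal function $\psi$ and then forces you to invoke Lemma~\ref{lem:max-averaged-rw} \emph{once per block}. This costs too much in $q$. Concretely, from
$$
\|\overline{\sfq}_{L}^{|g|,J}\,\cW_t\|_{\ell^q}\;\le\;\prod_{j=1}^m \|\psi\|_{\ell^{k_j q}}^{k_j}\;\le\;\prod_{j=1}^m \Big(\tfrac{k_j q}{k_j q-1}\Big)^{k_j}\,\overline{\scrC}^{\,h}\,\prod_{j=1}^m\|gw_t\|_{\ell^{k_j q}}^{k_j}\,,
$$
each singleton block $k_j=1$ contributes a full factor $\tfrac{q}{q-1}$. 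If $J$ is a pair there are $h-2$ singletons, so the accumulated factor is $(\tfrac{q}{q-1})^{h-2}\cdot(\text{bounded})$, not a single $\tfrac{q}{q-1}$. These cannot be ``absorbed into $\overline{\scrC}^h$'': $\overline{\scrC}$ in \eqref{eq:CbarC} is independent of $q$, and $(\tfrac{q}{q-1})^{h-2}$ is unbounded as $q\downarrow 1$. The same overcounting occurs in your treatment of \eqref{eq:est4+-new}.

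The paper gets the single $\tfrac{q}{q-1}$ by not distributing the max. It keeps $\max_n$ outside the full product over blocks, linearizes the power $(q_n^{|g|}(y)w_t(y))^k$ via a H\"older step \eqref{eq:aarg} into $(\sfc\,\rme^{2\sfc t^2 n})^{k-1}\,q_n^{|g|^k w_t^{k-1}}(y)\,w_t(y)$, packages all blocks into one function $G(\by)=\prod_j |g(y_j)|^{|J^j|}w_t(y_j)^{|J^j|-1}$ on $(\Z^2)^m$, and then applies Lemma~\ref{lem:max-averaged-rw} in dimension $m$ exactly once. This produces a single $\tfrac{q}{q-1}$ and $\overline{\scrC}^{h}$, after which the interpolation of $\|G\,w_t^{\otimes m}\|_{\ell^q}$ proceeds as you do. Your argument would suffice for the application in this paper where $p=q=2$, but it does not prove \eqref{eq:est4-new}--\eqref{eq:est4+-new} as stated, which is deliberately sharp in $q$ to accommodate the regime $q\to\infty$ (cf.\ the remark after Theorem~\ref{th:moments-bound2} and the LZ21+ reference).
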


\begin{remark}
We can bound $\|g \, w_t \|_{\ell^\infty} \le \|g \|_{\ell^\infty}
\, \|w_t \|_{\ell^\infty}$ and
$\|g \, w_t \|_{\ell^q} \le \|g \|_{\ell^\infty} \, \|w_t \|_{\ell^q}$.
By a direct computation, see \eqref{eq:easybound}, we have
\begin{equation} \label{eq:norms-wt}
	\| w_t \|_{\ell^\infty} = 1 \,, \qquad
	\| w_t \|_{\ell^q} = \bigg(\sum_{z\in\Z^2} \rme^{-q t |z|}\bigg)^{\frac{1}{q}} 
	\le \frac{36^{\frac{1}{q}}}{t^{\frac{2}{q}}} \,,
\end{equation}
therefore we obtain from \eqref{eq:est4-new}
\begin{equation}\label{eq:est4}
\begin{split}
	\max_{J \ne *} \big\Vert \widebar{\sfq}_{L}^{|g|,J}
	\, \cW_t \big\Vert_{\ell^q} \le  
	\tfrac{q}{q-1} \, \big( 36^{\frac{1}{q}} \, \overline{\scrC}\,\big)^h \, 
	\, \frac{\|g\|_{\ell^\infty}^h}{t^{\frac{2}{q}(h-1)}} \,.
\end{split}
\end{equation}
Similarly, from \eqref{eq:est4+-new} we deduce that
\begin{equation}\label{eq:est4+}
	\max_{\substack{I\,\text{pair}\\ J \ne *,\, J \not\supseteq I}}
	\big\Vert \overline{\sfq}_{L}^{|g|,J}
	\, \cW_t \, \cV_s^I\big\Vert_{\ell^q}\le 
	\,
	\tfrac{q}{q-1} \, \big( 36^{\frac{1}{q}} \, \overline{\scrC}\,\big)^h \, 
	\frac{\|g\|^h_{\ell^\infty}}{s^{\frac{2}{q}}\,t^{\frac{2}{q}(h-2)}} \,.
\end{equation}
\end{remark}

\smallskip
\subsection{Bulk terms}
\label{sec:4th-bulk}

In this section we estimate the the \emph{bulk terms} appearing in \eqref{eq:Xibulk},
i.e.\ 
$\big\| \widehat \sfQ_{L} \big\|_{\ell^q \to \ell^q}^{\cW}$
and
$\big\| |\widehat \sfU|_{L,\lambda, \beta}\big\|_{\ell^q \to \ell^q}^{\cW}$
from \eqref{eq:Crho1}-\eqref{eq:Crho2}.
The proofs are also given in Appendix~\ref{app:boundary}.

\smallskip

We recall the weights $\cW_t$ and $\cV_s^I$, see \eqref{eq:weights} and \eqref{eq:V}.
We will choose the parameters $t, s = O(\frac{1}{\sqrt{L}})$, see \eqref{eq:choiceoft}
and \eqref{eq:choiceofs}, hence
\emph{the following constants are uniformly bounded}:
\begin{equation}\label{eq:hatcheckC}
\begin{gathered}
	\widehat{\mathscr{C}\,} := 4000 \, \sfc^2 \, \rme^{8 \sfc\, t^2  L} \,, \qquad
	\widehat{\widehat{\mathscr{C}\,}} := 4000 \, \sfc^2 \, \rme^{8 \sfc\, (t+2s)^2  L} \,, \\
	\widecheck{\mathscr{C}\,} := 2 \, \rme^{4 \sfc \, t^2 L} \,, \qquad
	\widecheck{\widecheck{\mathscr{C}\,}} := 2 \, \rme^{4 \sfc \, (t+s)^2 L} \,.\
\end{gathered}
\end{equation}
We first estimate the ``bulk random walk term'' 
which involves  $\widehat \sfQ_{L}^{I,J}$,
see \eqref{eq:Crho1}.

\begin{proposition}[Bulk random walk term]\label{prop:bulk-rw}
For any $h\ge 2$, $t \ge 0$, $L\in\N$ we have, for any
$q \in (1,\infty)$ and $\widehat{\mathscr{C}\,}$
from \eqref{eq:hatcheckC}, 
\begin{equation}\label{eq:est2}
	\big\| \widehat \sfQ_{L} \big\|_{\ell^q \to \ell^q}^{\cW_t} :=
	\max_{I, J \ne *, \, I \ne J} \, \big\Vert \cW_t \, \widehat \sfQ_{L}^{I,J} 
	\tfrac{1}{\cW_t} \big\Vert_{\ell^q \to \ell^q}
	\le h! \, \widehat{\mathscr{C}\,}^h \, q \, \tfrac{q}{q-1} \,.
\end{equation}
Moreover, for $s \ge 0$  and $\widehat{\widehat{\mathscr{C}\,}}$ from \eqref{eq:hatcheckC},
\begin{equation}\label{eq:est2V}
	\max_{I, J \, \text{pairs}, \, I \ne J} \, \big\Vert \tfrac{\cW_t}{\cV_s^J} 
	\, \widehat \sfQ_{L}^{I,J} 
	\tfrac{1}{\cW_t \, \cV_s^I} \big\Vert_{\ell^q \to \ell^q}
	\le h! \, \widehat{\widehat{\mathscr{C}\,}}^h \, q \, \tfrac{q}{q-1} \,.
\end{equation}
(note that the weights $\cV_s^J, \cV_s^I$ appear \emph{in the denominator on both sides}).
\end{proposition}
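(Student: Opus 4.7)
My plan is to prove both \eqref{eq:est2} and \eqref{eq:est2V} via Schur's test for $\ell^q\to\ell^q$ operator norms. Recall that for a non-negative kernel $K$, if $\sup_{\bz}\sum_{\bx}K(\bz,\bx)\le A$ and $\sup_{\bx}\sum_{\bz}K(\bz,\bx)\le B$, then $\|K\|_{\ell^q\to\ell^q}\le A^{1/p}B^{1/q}$ with $p=q/(q-1)$; the factor $pq=q\cdot q/(q-1)$ on the right-hand sides of \eqref{eq:est2}--\eqref{eq:est2V} is exactly the Schur-interpolation factor (up to universal constants absorbed into $\widehat{\mathscr{C}\,}$ or $\widehat{\widehat{\mathscr{C}\,}}$). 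First, I would use the pointwise estimate \eqref{eq:ratiocW} to reduce the problem to bounding the $\ell^q\to\ell^q$ norm of the non-negative kernel
\[
K(\bz,\bx) := \ind_{\{\bz\sim I,\,\bx\sim J\}}\sum_{n=1}^L\prod_{i=1}^h q_n(x^i-z^i)\,\rme^{t|x^i-z^i|}.
\]

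To compute the row sum $\sum_{\bx\sim J}K(\bz,\bx)$ I would parameterize $\bx\sim J$ by its $|J|$ block-values: the product over $i$ then factorizes as a product over the blocks of $J$, each of the form $F_n^{J^k}(z|_{J^k}) := \sum_{x\in\Z^2}\prod_{j\in J^k}q_n(x-z^j)\rme^{t|x-z^j|}$. For a singleton block this factor is bounded by $\scrC$ via \eqref{eq:rw-weight+}; for a pair, Cauchy--Schwarz combined with the second bound of \eqref{eq:rw-weight} (applied to $\sum_x q_n(x)^2\rme^{2t|x|}$ after a translation, using $|y|\le|y^{(1)}|+|y^{(2)}|$ and symmetry) yields a factor of the form $C\,q_{2n}(0)\,\rme^{2\sfc t^2 n}$; for a block of size $m\ge 3$, one absorbs $m-2$ of the kernels into $\|q_n\|_\infty\le C/n$ (local limit theorem) and reduces to the pair case. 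The column sum $\sum_{\bz\sim I}K(\bz,\bx)$ is estimated symmetrically using the block structure of $I$. Summing in $n\in\{1,\dots,L\}$ produces elementary sums $\sum_{n=1}^L q_{2n}(0)^{\alpha}\rme^{O(t^2n)}$ for integer exponents $\alpha\ge 1$, all absorbed into the constant $\widehat{\mathscr{C}\,}^h$ (the exponential $\rme^{8\sfc t^2 L}$ in $\widehat{\mathscr{C}\,}$ is designed precisely to swallow the weight-ratio contributions). The combinatorial factor $h!$ is a crude upper bound on the number of distinct block-pairing patterns that arise as $(I,J)$ ranges over non-trivial pairs with $I\ne J$.

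The proof of \eqref{eq:est2V} follows exactly the same plan, but now the weight ratio also carries $\cV_s^I(\bx)^{-1}=\rme^{s|x^{a_I}-x^{b_I}|}$ and $\cV_s^J(\bz)^{-1}=\rme^{s|z^{a_J}-z^{b_J}|}$, where $\{a_I,b_I\}$ and $\{a_J,b_J\}$ denote the distinguished pair inside each partition (see \eqref{eq:V}). Using the triangle inequality as in \eqref{eq:ratioV}, these two extra exponentials are absorbed into at most four of the rate-$t$ factors $\rme^{t|x^i-z^i|}$, thereby enlarging the effective rate from $t$ to $t+2s$ on those coordinates. Running the same row/column-sum analysis with this enlarged rate, via Lemma~\ref{lem:weighted-rw} with $t$ replaced by $t+2s$, then produces $\widehat{\widehat{\mathscr{C}\,}}$ in place of $\widehat{\mathscr{C}\,}$ and yields \eqref{eq:est2V}.

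The main technical hurdle, as I see it, is the bookkeeping: one must check, for every non-trivial pair $(I,J)$ with $I\ne J$, that the joint row-sum/column-sum estimate generates enough collision-type factors $q_{2n}(0)$ or $\|q_n\|_\infty\le C/n$ for the $n$-sum to close up inside $\widehat{\mathscr{C}\,}^h$, with no residual growth in~$L$ beyond the harmless exponential $\rme^{O(t^2 L)}$. Controlling the boundary configurations where $I$ and $J$ contain many singletons is the delicate point, and is precisely where the combination of the constraint $I\ne J$, the combinatorial factor $h!$, and the Schur-interpolation factor $pq$ becomes indispensable; the analogous argument for \eqref{eq:est2V} then requires only a routine adaptation of the rate from $t$ to $t+2s$.
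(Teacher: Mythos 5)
Your plan hinges on Schur's test with row/column sums, and this is precisely where the argument breaks down. Fix $J$ to be a pair, say $J=\{\{a,b\},\{c\}:c\ne a,b\}$, and bound the row sum $\sup_{\bz\sim I}\sum_{\bx\sim J}K(\bz,\bx)$. After factorizing over the blocks of $J$, the $h-2$ singleton blocks each contribute a factor $\le \scrC$ by \eqref{eq:rw-weight+}, while the pair block contributes a factor of order $q_{2n}(0)\,\rme^{O(\sfc t^2 n)}$ (this is the ``collision factor'' you correctly identify). But then the sum over $n$ gives
\[
\sum_{n=1}^L q_{2n}(0)\,\rme^{O(\sfc t^2 n)} \asymp R_L \asymp \frac{\log L}{\pi}\,,
\]
which diverges as $L\to\infty$ and cannot be absorbed into $\widehat{\mathscr{C}\,}^h$, whose dependence on $L$ is only through the uniformly bounded exponential $\rme^{8\sfc\, t^2 L}$. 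The same issue arises in the column sum when $I$ is a pair. Since the regime of interest in Section~\ref{sec:4th-3} is precisely $I,J$ both pairs, this is not a corner case but the central one, and the constraint $I\ne J$ does not rescue it: the $\ne$ constraints only shrink the summation sets, they do not introduce additional decay. The claim that ``all [the $n$-sums are] absorbed into the constant $\widehat{\mathscr{C}\,}^h$'' is therefore false for $\alpha=1$.

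There is also a misattribution of the factor $pq$. The Schur test, which is just Riesz--Thorin interpolation between $\ell^1\to\ell^1$ and $\ell^\infty\to\ell^\infty$, gives $\|K\|_{\ell^q\to\ell^q}\le A^{1/p}B^{1/q}$ with \emph{no} extra factor of $pq$; the geometric mean of two finite numbers is finite. The factor $pq$ in \eqref{eq:est2} is genuinely the blow-up rate of the Hardy--Littlewood--Sobolev-type constant at the critical exponent, and it is needed precisely because the kernel is at borderline integrability. Indeed, the paper's proof first establishes the pointwise bound $\widehat\sfQ_L^{*,*}(\bz,\bx)\lesssim\rme^{-|\bx-\bz|^2/(16\sfc L)}(1+|\bx-\bz|^2)^{-(h-1)}$ (via the heat kernel estimate of Lemma~\ref{lem:heat-kernel-bound} and a careful summation over $n$), and then invokes the HLS-type inequality \eqref{eq:HLS} from \cite{CSZ23,LZ21+}: for $I$ or $J$ a pair, the restricted space $(\Z^2)^h_I$ or $(\Z^2)^h_J$ has dimension $2(h-1)$, so the kernel $(1+|\bx-\bz|^2)^{-(h-1)}$ is \emph{not} integrable on either variable separately---exactly the logarithmic divergence you would see via Schur---yet the bilinear estimate still holds with the constant $pq$. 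This HLS step is the essential input and cannot be replaced by a Schur test. The adaptation to \eqref{eq:est2V} (replacing $t$ by $t+2s$) that you outline is fine, but it sits on top of an argument that does not close.
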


\medskip

We next focus on the 
term $\big\| |\widehat \sfU|_{L,\lambda, \beta}\big\|_{\ell^q \to \ell^q}^{\cW}$
from \eqref{eq:Crho2}
which depends on the operator 
$|\widehat \sfU|_{L,\lambda, \beta}^{I}$, see \eqref{eq:sfU} and \eqref{eq:sfUalt}.
Recalling $R_N$ from \eqref{eq:RN} and $q_n(x)$ from \eqref{eq:defofq}, we define
\begin{equation}\label{eq:RNlambda}
	R_N^{(\lambda)} := \sum_{n=1}^N \rme^{-\lambda n} \, q_{2n}(0) \,,
\end{equation}
which reduces to $R_N$ for $\lambda = 0$.
In the next result we are going to assume that $|\bbE[\xi_\beta^I]| \le \sigma_\beta^2$
for any partition $I \ne *$,
which holds for $\beta > 0$ small enough (see Proposition~\ref{lem:dismom}).

\begin{proposition}[Bulk interacting term]\label{prop:bulk-interacting}
Let $\beta > 0$ satisfy $\max_{I\ne *} |\bbE[\xi_\beta^I]| \le \sigma_\beta^2$.
For any $h\ge 2$, $t \ge 0$, $L \in \N$, $\lambda \ge 0$
such that $\sigma_\beta^2 \, R_L^{(\lambda)} < 1$ we have,
for any $q \in (1,\infty)$ and $\widecheck{\mathscr{C}\,}$
from \eqref{eq:hatcheckC},
\begin{equation}\label{eq:est3}
\begin{split}
	\big\| |\widehat \sfU|_{L,\lambda, \beta}\big\|_{\ell^q \to \ell^q}^{\cW_t}
	:= \max_{I\ne *} \, \big\Vert \, \cW_t \, |\widehat \sfU|_{L,\lambda, \beta}^{I}
	\, \tfrac{1}{\cW_t} \, \big\Vert_{\ell^q \to \ell^q}
	&\le 1 + \widecheck{\mathscr{C}\,}^h
	\, \frac{\sigma_\beta^2 \, R_L^{(\lambda)}}
	{1- \sigma_\beta^2 \, R_L^{(\lambda)}} \,.
\end{split}
\end{equation}
Moreover, 
for any $s \ge 0$ we have,
for $\tau \in \{+1,-1\}$and $\widecheck{\widecheck{\mathscr{C}\,}}$
from \eqref{eq:hatcheckC},
\begin{equation}\label{eq:est3+}
	\begin{split}
	\max_{\substack{J \, \text{pair}\\ I\ne *}} \,
		\big\Vert \, (\cV^J_s)^{\tau} \, \cW_t \, |\widehat \sfU|_{L,\lambda, \beta}^{I}
	\, \tfrac{1}{\cW_t \, (\cV^J_s)^{\tau} } \, \big\Vert_{\ell^q \to \ell^q}
		&\le 1 + \widecheck{\widecheck{\mathscr{C}\,}}^h
		\, \frac{\sigma_\beta^2 \, R_L^{(\lambda)}}
		{1- \sigma_\beta^2 \, R_L^{(\lambda)}}  \,.
	\end{split}
\end{equation}
\end{proposition}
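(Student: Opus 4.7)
The plan is to estimate the weighted $\ell^q \to \ell^q$ operator norm of $|\widehat{\sfU}|^I_{L,\lambda,\beta}$ via Riesz--Thorin interpolation between $\ell^1$ and $\ell^\infty$, which for a positive kernel is equivalent to Schur's test. By the symmetry $q_n(y) = q_n(-y)$ from Assumption~\ref{ass:rw}, the two endpoints give the same bound (up to swapping the direction of the weight ratio), so it suffices to control $\sup_{\bz \sim I} \sum_{\bx \sim I} \frac{\cW_t(\bz)}{\cW_t(\bx)} \, |\widehat{\sfU}|^I_{L,\lambda,\beta}(\bz, \bx)$.

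I would first treat the case of a pair $I = \{\{a,b\}, \{c\}: c \ne a, b\}$, where the factorisation \eqref{eq:sfUpair}--\eqref{eq:Ux} gives $|\sfU|^I_{m,\beta}(\bz,\bx) = U_{m,\beta}(x^a - z^a) \prod_{c \ne a, b} q_m(x^c - z^c)$ and $|\bbE[\xi_\beta^I]| = \sigma_\beta^2$. Using the weight ratio bound \eqref{eq:ratiocW} and Lemma~\ref{lem:weighted-rw}, each of the $h-2$ singleton coordinates contributes a factor $\sum_y \rme^{t|y|} q_m(y) \le \sfc \, \rme^{2 \sfc t^2 m}$. For the pair coordinate, I would unfold the series representation \eqref{eq:Ux} of $U_{m,\beta}$, apply the step-wise bound $\sum_y \rme^{2t|y|} q_n(y)^2 \le C \, q_{2n}(0) \, \rme^{8 \sfc t^2 n}$ (deduced from \eqref{eq:rw-weight} via Cauchy--Schwarz and the symmetry of $q_n$), and use the triangle inequality $|y| \le \sum_i |y_i - y_{i-1}|$ to collapse the per-step corrections $\prod_i \rme^{c t^2 (n_i - n_{i-1})}$ into a single factor $\rme^{c t^2 m}$.

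Summing over $m \le L$ and decoupling the constrained convolution $\sum_{m_1 + \cdots + m_k \le L}$ as $\prod_i \sum_{m_i \le L}$, the resulting series reorganises as a geometric series of ratio at most $\sigma_\beta^2 R_L^{(\lambda)}$ (after absorbing the shift $\lambda \to \lambda - O(t^2)$ via $t^2 L = O(1)$), multiplied by an overall constant of order $\widecheck{\mathscr{C}\,}^h$. For non-pair $I$, the blocks have size $\ge 2$ and the bound $\sum_y q_n(y)^k \le q_n(0)^{k-2} q_{2n}(0)$ (for $k \ge 3$) shows that ``collapsed'' coordinates give contributions no larger than the pair case, while $|\bbE[\xi_\beta^I]| \le \sigma_\beta^2$ by hypothesis, so the same estimate propagates. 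For \eqref{eq:est3+}, the key observation is that $\bigl((\cV_s^J)^\tau(\bz)/(\cV_s^J)^\tau(\bx)\bigr) \le \rme^{s |z^{a_J} - x^{a_J}| + s |z^{b_J} - x^{b_J}|}$ holds for both $\tau = \pm 1$ by the triangle inequality applied to $|z^{a_J} - z^{b_J}| - |x^{a_J} - x^{b_J}|$; this effectively replaces $t$ by $t+s$ on the two coordinates $a_J, b_J$, turning $\widecheck{\mathscr{C}\,}^h$ into $\widecheck{\widecheck{\mathscr{C}\,}}^h$.

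The main technical obstacle will be the ``collapse'' step for the pair coordinate: one must ensure that the per-step exponential corrections to $q_n(y)^2$ aggregate into a \emph{single} factor $\rme^{O(t^2) m}$ rather than $\rme^{O(t^2) k m}$, which would push the geometric ratio above $\sigma_\beta^2 R_L^{(\lambda)}$ and make the series diverge already in the quasi-critical regime. This collapse is what allows the constant $\widecheck{\mathscr{C}\,}^h$ to stand \emph{outside} the fraction $\frac{\sigma_\beta^2 R_L^{(\lambda)}}{1 - \sigma_\beta^2 R_L^{(\lambda)}}$ in the stated bound, which is essential for applications where $\sigma_\beta^2 R_L^{(\lambda)}$ is allowed to approach $1$.
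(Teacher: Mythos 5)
Your proposal follows the paper's argument closely: the Schur-type bound (the paper's \eqref{eq:inview}), factoring $|\sfU|^I_{n,\beta}$ into $U_{n,\beta}(x^a-z^a)$ times singleton $q_n$-factors, the moment estimate $\sum_x \rme^{t|x|}U_{n,\beta}(x)\le 2\,\rme^{2\sfc t^2 n}U_{n,\beta}$ via the per-step bound from \eqref{eq:rw-weight} and the triangle inequality, the reduction of non-pair $I$ to the pair case using $q_n\le 1$, and the $\cV_s$-weight manipulation for \eqref{eq:est3+}. You also correctly identify the crux: the correction must not enter the geometric ratio.

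The one place where your description should be tightened is the parenthetical ``(after absorbing the shift $\lambda\to\lambda-O(t^2)$ via $t^2L=O(1)$)''. As phrased, this suggests you first decouple the constrained sum into $\prod_i\sum_{m_i\le L}\rme^{-(\lambda - O(t^2))m_i}q_{2m_i}(0)=\bigl(R_L^{(\lambda-O(t^2))}\bigr)^k$ and then compare $R_L^{(\lambda-O(t^2))}$ to $R_L^{(\lambda)}$. That is the wrong order: the comparison $R_L^{(\lambda-ct^2)}\le\rme^{ct^2 L}R_L^{(\lambda)}$ puts the multiplicative constant \emph{inside} the $k$-th power of the geometric ratio, giving $(\sigma_\beta^2\,\rme^{ct^2L}\,R_L^{(\lambda)})^k$, which can diverge precisely in the quasi-critical regime where $\sigma_\beta^2 R_L^{(\lambda)}\uparrow 1$. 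The paper avoids this by using the $m$-uniform bound $\rme^{ct^2 m}\le\rme^{ct^2 L}$ \emph{before} decoupling (compare \eqref{eq:sommax2} and \eqref{eq:pluin}), so the constant is pulled out of the entire sum $\sum_{n\le L}\rme^{-\lambda n}U_{n,\beta}$ once and for all, and only then is the geometric bound \eqref{eq:sumUn} applied with the original $\lambda$. With that correction of the order of operations, your proposal becomes a complete match of the paper's proof.
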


\smallskip

\section{Proof of Proposition~\ref{prop:stimamomento}} 
\label{sec:4th-3}

In this section we prove Proposition~\ref{prop:stimamomento}.
The key difficulty is that our goal \eqref{eq:stimamomento}
involves 
\emph{the (optimal) $1/M^2$ dependence} on
the width of the time interval $(\frac{i-1}{M}N, \frac{i}{M}N]$
(recall the definition \eqref{eq:Z(i)} of the random variable $X_{N, M}^{(i)}$).
This requires sharp ad hoc estimates.

\subsection{Setup}
By formula \eqref{eq:backtous} from Remark~\ref{rem:backtous},
for $l = 1,\ldots, M$ we can write
\begin{equation}\label{eq:backtous2}
	\mathbb E\left [(X_{N,M}^{(l)})^4\right ] = 
	\frac{\theta_N^2}{N^4} \, \cM_{L,\beta}^{4}(f,g)
\end{equation}
where $L, \beta, f, g$ are given as follows:
\begin{equation}\label{eq:parameters}
	L = \frac{N}{M} \,, \qquad \beta = \beta_N \ \text{ in } (\ref{eq:quasicrit}) \,, \qquad
	f(\cdot) = q_{\frac{l-1}{M}N}^{\varphi_N}(\cdot) \ \text{ in  (\ref{eq:phiN})-(\ref{eq:kernel1})} \,, 
	\qquad g(\cdot) \equiv 1 \,.
\end{equation}
We can bound $\cM^4_{\frac{N}{M},\beta_N}(f,g)$ 
exploiting \eqref{eq:MNh5} for $h=4$ and $\lambda = 0$, which yields
\begin{equation}
	\label{eq:startfromthis}
	\begin{aligned}
		\mathbb E\big [(X_{N,M}^{(l)})^4\big ] 
		\le \frac{\theta_N^2}{N^4} \, \bigg(\,
		\Xi(1) + \Xi(2) + \sum_{r=3}^\infty \Xi(r) \, \bigg) \,,
	\end{aligned}
\end{equation}
where $\Xi(r)$ is defined in \eqref{eq:Xir}.
We show that the only non-negligible term in \eqref{eq:startfromthis} 
is $\Xi(2)$: 
more precisely, we will prove that 
there is $C < \infty$ such that, for any  $M \in \N$, 
\begin{equation}\label{eq:XI-2}
	\begin{split}
		\limsup_{N\to\infty} \, \frac{\theta_N^2}{N^4} \,\Xi(2) \le \frac{C}{M^2} \,,
	\end{split}
\end{equation}
while
\begin{equation}\label{eq:XI-13}
	\begin{aligned}
		\lim_{N \to \infty}\, \frac{\theta_N^2}{N^4} \,\Xi(1) = 0 
		\qquad \text{and} \qquad\lim_{N \to \infty} \, \frac{\theta_N^2}{N^4} \, 
		\sum_{r=3}^\infty \Xi(r) = 0 \,.
	\end{aligned}
\end{equation}
This will complete the proof of Proposition~\ref{prop:stimamomento}.

We estimate $\Xi(r)$ exploiting the bound \eqref{eq:MNh6}-\eqref{eq:Xibulk} with the choice
\begin{equation*}
	p = q = 2 \,.
\end{equation*}
We need to control the \emph{boundary terms} and the \emph{bulk terms}, see \eqref{eq:terms}.
We recall that the weights $\cW_t$ and $\cV_s^I$ are defined in
\eqref{eq:weights} and \eqref{eq:V}, and we fix
\begin{equation}\label{eq:st}
	t = \tfrac{1}{\sqrt{N}} \,, \qquad s = \tfrac{1}{\sqrt{L}} = \sqrt{\tfrac{M}{N}} \,.
\end{equation}

For notational lightness,
we write $a \lesssim b$ whenever $a \le C \, b$ for some constant $0<C < \infty$.
We also denote by $\|\varphi\|_p := (\int_{\R^2} \varphi(x)^p \, \dd x)^{1/p}$ the usual
$L^p$ norm of a function $\varphi: \R^2 \to \R$.

\smallskip

\subsection{Boundary terms}
We estimate the \emph{left boundary term}
$\big\Vert \widehat \sfq_{L}^{|f|,I} \frac{1}{\cW_t} \big\Vert_{\ell^2}$
applying \eqref{eq:est1inf}.
We recall from \eqref{eq:parameters} that
$f(\cdot)=q^{\varphi_N}_{\frac{l-1}{M}N}(\cdot)$ for $1 \le l \le M$.
Let us estimate $\|\frac{f}{w_{t}}\|_{\ell^\infty}$ and
$\|\frac{f}{w_{t}}\|_{\ell^2}$, starting from the former.
By \eqref{eq:weighted-averaged-rw2},
for $l \le M$ and $t = \frac{1}{\sqrt{N}}$ we have
\begin{equation*}
	\bigg\|\frac{f}{w_{t}}\bigg\|_{\ell^\infty} 
	\le \sfc \, \rme^{2\sfc \, t^2 \frac{l-1}{M}N}
	\, \bigg\|\frac{\varphi_N}{w_{t}}\bigg\|_{\ell^\infty} 
	\le \sfc \, \rme^{2\sfc}
	\, \bigg\|\frac{\varphi_N}{w_{t}}\bigg\|_{\ell^\infty} \,.
\end{equation*}
Since $\varphi$ is compactly supported, say in a ball $B(0,R)$,
we have that $\varphi_N$ is supported in $B(0,R\sqrt{N} + \sqrt{2})
\subseteq B(0,2R\sqrt{N})$, see \eqref{eq:phiN}.
By $w_t(x) = \rme^{-t|x|}$, we then obtain
\begin{equation}\label{eq:stimetta::}
\begin{split}
	\bigg\|\frac{\varphi_N}{w_{t}}\bigg\|_{\ell^\infty} \le 
	\rme^{t \, 2 R \sqrt{N}} \, \big\|\varphi_N \big\|_{\ell^\infty} 
	\le \rme^{2R} \, \|\varphi\|_{\infty} \lesssim 1 \,,
	\qquad \text{hence} \qquad
	\bigg\|\frac{f}{w_{t}}\bigg\|_{\ell^\infty}  \lesssim 1 \,,
	\end{split}
\end{equation}
because $\|\varphi_N \|_{\ell^\infty} \le \| \varphi \|_{\infty}$.
We next estimate $\|\frac{f}{w_{t}}\|_{\ell^2}$.
By a Riemann sum approximation, we see from  \eqref{eq:phiN} that $\|\varphi_N\|_{\ell^2}
\lesssim \sqrt{N} \, \|\varphi\|_2$, hence by \eqref{eq:weighted-averaged-rw2} 
we obtain
\begin{equation}\label{eq:stimetta::2}
	\bigg\|\frac{f}{w_{t}}\bigg\|_{\ell^2} 
	\le \sfc \, \rme^{2\sfc}
	\, \bigg\|\frac{\varphi_N}{w_{t}}\bigg\|_{\ell^2} 
	\le \sfc \, \rme^{2\sfc} \, \rme^{2R}
	\, \big\|\varphi_N \big\|_{\ell^2}
	\lesssim \sqrt{N} \,.
\end{equation}
We can finally apply the estimate \eqref{eq:est1inf} for $p=2$ and $h=4$ to get,
since $L = \frac{N}{M}$,
\begin{equation}\label{eq:stimaperLB}
	\max_{I \ne *} \Big\Vert \widehat \sfq_{L}^{|f|,I}  \tfrac{1}{\cW_t} \Big\Vert_{\ell^2}
	\le 4 \, \scrC^h \, L \, \bigg\|\frac{f}{w_{t}}\bigg\|_{\ell^\infty}
	\, \bigg\|\frac{f}{w_{t}}\bigg\|_{\ell^2}^{3}
	\lesssim \frac{N^{\frac{5}{2}}}{M} \,.
\end{equation}

We now estimate the \emph{right boundary term}
$\big\| \overline{\sfq}_{L}^{|g|,J} \, \cW_t \big\|_{\ell^2}$:
applying \eqref{eq:est4} for $q=2$ and $h=4$,
since $g \equiv 1$ and $t = \frac{1}{\sqrt{N}}$, we obtain
\begin{equation}
	\label{eq:stimaperRB}
	\max_{J \ne *} \big\| \overline{\sfq}_{L}^{|g|,J}
	\, \cW_t \big\|_{\ell^2} 
	\le \big( 12 \, \overline{\mathscr{C}} \big)^4 \, \frac{\|g\|_\infty^4}{t^3}
	\lesssim \, N^{\frac{3}{2}}\,.
\end{equation}
Overall, 
we have shown that
\begin{equation}\label{eq:mezza}
	\Big( \max_{I \ne *} \big\Vert \widehat \sfq_{L}^{|f|,I} \tfrac{1}{\cW} \big\Vert_{\ell^p} \Big) 
	\, \Big( \max_{J \ne *} \big\Vert \cW \,
	\widebar{\sfq}_{L}^{|g|,J} \big\Vert_{\ell^q} \Big)
	\lesssim \frac{N^4}{M} \,.
\end{equation}
In view of \eqref{eq:MNh6}, it remains to estimate $\Xi^\bulk(r)$ defined in \eqref{eq:Xibulk}.

\smallskip

\subsection{Bulk terms}
We next estimate the \emph{bulk terms}, see \eqref{eq:Crho1}-\eqref{eq:Crho2}.
For the first bulk term, see \eqref{eq:Crho1}, we apply
directly the estimate \eqref{eq:est2} with $q=2$ and $h=4$ to get
\begin{equation}
	\label{eq:stimaperQIJ} 
	\big\| \widehat \sfQ_{L} \big\|_{\ell^q \to \ell^q}^{\cW_t} =
	\max_{I, J \ne *, \, I \ne J} \, \big\Vert \cW_t \, \widehat \sfQ_{L}^{I,J} 
	\tfrac{1}{\cW_t} \big\Vert_{\ell^2 \to \ell^2}
	\le 4! \, \widehat{\mathscr{C}\,}^4 \, 4 \lesssim 1 \,.
\end{equation} 
(Also note that $\big\| \widehat \sfQ_{L} \big\|_{\ell^q \to \ell^q}^{\cW_t}
\ge \cW_t(0) \, \widehat \sfQ_{L}^{I,J}(0,0)
\tfrac{1}{\cW_t(0)} \ge \sfQ_2(0,0) \gtrsim 1$.)

\smallskip

We then focus on the second term, see \eqref{eq:Crho2}. 
For $L = \frac{N}{M} \le N$ and $\beta = \beta_N$ as in \eqref{eq:quasicrit}
\begin{equation}\label{eq:conticonsigma}
	1-\sigma_{\beta_N}^2 \, R_L \ge 1- \sigma_{\beta_N}^2 \, R_N \ge \frac{\theta_N}{\log N}
	> 0 \,, \qquad \text{in particular} \quad
	\sigma_{\beta_N}^2 \, R_L < 1 \,.
\end{equation}
Then by \eqref{eq:est3}  with $\lambda = 0$ (so that $R_N^{(\lambda)} = R_N$) we obtain,
recalling that $\theta_N \ll \log N$,
\begin{equation}\label{eq:est3-us}
	\big\| |\widehat \sfU|_{L,\lambda, \beta}\big\|_{\ell^q \to \ell^q}^{\cW_t}
	= \max_{I\ne *} \big\Vert \, \cW_t \, |\widehat \sfU|_{L,\lambda, \beta}^{I}
	\, \tfrac{1}{\cW_t} \, \big\Vert_{\ell^2 \to \ell^2}
	\le 1 + \widecheck{\mathscr{C}\,}^4  \frac{ \sigma_{\beta_N}^2 \, R_L}
	{1- \sigma_{\beta_N}^2 \, R_L} 
	\lesssim \frac{\log N}{\theta_N}  \,.
\end{equation}
Since $\beta_N \to 0$, the bound \eqref{eq:dismom} ensures that
$|\bbE[\xi_{\beta_N}^I]| = O(\sigma_{\beta_N}^2) \le O(\frac{1}{R_N}) = O( \frac{1}{\log N})$ 
for any $I\ne *$ and $N$ large,
therefore there is $C < \infty$ such that
\begin{equation}\label{eq:geom}
	\Big( \max_{I \ne *} \, \big|\bbE[\xi_{\beta_N}^I]\big|  \Big)
	\,\, \big\| \widehat \sfQ_{L} \big\|_{\ell^q \to \ell^q}^{\cW_t}
	\, \, \big\| |\widehat \sfU|_{L,\lambda, \beta}\big\|_{\ell^q \to \ell^q}^{\cW_t}
	\,\le\, \frac{C}{\theta_N} \,.
\end{equation}

\smallskip

\subsection{Terms $r \ge 3$}

We are ready to prove the second relation in \eqref{eq:XI-13}, 
which shows that the terms 
$r \ge 3$
give a negligible contributions to $\mathbb E\big [(X_{N,M}^{(l)})^4\big ]$,
recall \eqref{eq:startfromthis}.

Let us denote by $c(h) \in \N$ the number 
of partitions $I \vdash \{1,\ldots, h\}$ with $I \ne *$. Then by \eqref{eq:Xibulk}
we have the geometric bound
\begin{equation*}
	\Xi^\bulk(r) \le  \big( 
	\big\| \widehat \sfQ_{L} \big\|_{\ell^q \to \ell^q}^{\cW_t}
	\big)^{-1}
	\, \Big\{ c(h) \, \Big( \max_{I \ne *} \, \big|\bbE[\xi_{\beta_N}^I]\big|  \Big)
	\,\, \big\| \widehat \sfQ_{L} \big\|_{\ell^q \to \ell^q}^{\cW_t}
	\, \, \big\| |\widehat \sfU|_{L,\lambda, \beta}\big\|_{\ell^q \to \ell^q}^{\cW_t}
	\Big\}^r \,,
\end{equation*}
and note that the term in brackets is $< \frac{1}{2}$ for large~$N$, by \eqref{eq:geom}
and $\theta_N \to \infty$, therefore
\begin{equation*}
	\sum_{r=3}^\infty \Xi^\bulk(r) \lesssim \, \Xi^\bulk(3)
	\lesssim \frac{1}{\theta_N^3} \,.
\end{equation*}
Applying \eqref{eq:MNh6} and \eqref{eq:mezza}, we then obtain
the second relation in \eqref{eq:XI-13}:
\begin{equation*}
	\frac{\theta_N^2}{N^4} \, \sum_{r=3}^\infty \Xi(r) \le
	\frac{\theta_N^2}{M} \, \sum_{r=3}^\infty \Xi^\bulk(r) \lesssim
	\frac{1}{M \, \theta_N} \, \xrightarrow[\,N\to\infty\,]{} 0 \,.
\end{equation*}

\begin{remark}
The same arguments can be applied to show that  in the quasi-critical regime,
the contribution of the terms $r > \big \lfloor \frac{h}{2} \big \rfloor$
for the $h$-th moment of $X^{(l)}_{N,M}$
is negligible as $N\to\infty$.
\end{remark}

\subsection{Term $r = 1$}

We now prove the first relation in \eqref{eq:XI-13}.
A partition $I \vdash \{1,2,3,4\}$ with full support is either 
a double pair $I = \{\{a,b\}, \{c,d\} \}$ or the quadruple $I = \{1,2,3,4\}$,
hence $\bbE[\xi_{\beta_N}^{I}] \lesssim \sigma_{\beta_N}^4$ for large~$N$,
by \eqref{eq:Exi} and \eqref{eq:meanvarxi} (see also Proposition~\ref{lem:dismom}).
Then, by \eqref{eq:Xibulk},
\begin{equation*}
	\Xi^\bulk(1) \,=\, \!\!
	\sum_{\substack{I \vdash\{1, \ldots, h\} \\
	\text{with full support}}}
	\!\! \big| \bbE[\xi_{\beta_N}^{I}] \big| \, 
	\big\| |\widehat \sfU|_{L,\lambda, \beta}\big\|_{\ell^q \to \ell^q}^{\cW_t}
	\,\lesssim\, \sigma_{\beta_N}^4 \, 
	\big\| |\widehat \sfU|_{L,\lambda, \beta}\big\|_{\ell^q \to \ell^q}^{\cW_t}
	\,\lesssim \, \frac{1}{(\log N) \, \theta_N} \,,
\end{equation*}
where we applied \eqref{eq:est3-us} and $\sigma_{\beta_N}^2 \le \frac{1}{R_N} = O( \frac{1}{\log N})$.
Applying \eqref{eq:MNh6} and \eqref{eq:mezza}, and recalling that $\theta_N \ll \log N$, we obtain
the first relation in \eqref{eq:XI-13}:
\begin{equation*}
	\frac{\theta_N^2}{N^4} \, \Xi(1) \le
	\frac{\theta_N^2}{M} \, \Xi^\bulk(1) \lesssim
	\frac{\theta_N}{M \, \log N} \, \xrightarrow[\,N\to\infty\,]{} 0 \,.
\end{equation*}

\smallskip

\subsection{Term $r=2$} 

We finally prove \eqref{eq:XI-2}, which completes the proof of Proposition~\ref{prop:stimamomento}.
We recall that $\Xi(2)$, defined by \eqref{eq:Xir}, is a sum over two partitions
$I_1, I_2 \vdash \{1,\ldots, h\}$ with $I_1 \ne *$, $I_2 \ne *$ and $I_1 \ne I_2$.
We then split $\Xi(2) = \Xi_\pairs(2) + \Xi_\others(2)$ where:
\begin{itemize}
\item $\Xi_\pairs(2)$ is the contribution to \eqref{eq:Xir} when both $I_1, I_2$ are pairs;
\item $\Xi_\others(2)$ is the complementary contribution when $I_1$ and/or $I_2$ is not a pair.
\end{itemize}

We first focus on $\Xi_\others(2)$ and on the corresponding
quantity $\Xi_\others^\bulk(2)$, see \eqref{eq:Xibulk}. If either $I_1$ or $I_2$ is not a pair, 
by Proposition~\ref{lem:dismom} we can bound
$|\bbE[\xi_{\beta_N}^{I_1}] \, \bbE[\xi_{\beta_N}^{I_2}]| \lesssim \sigma_{\beta_N}^5$,
hence
\begin{equation*}
	\Xi_\others^\bulk(2) \lesssim  \sigma_{\beta_N}^5 \, 
	\big\| \widehat \sfQ_{L} \big\|_{\ell^q \to \ell^q}^{\cW_t}
	\, \big( 
	\big\| |\widehat \sfU|_{L,\lambda, \beta}\big\|_{\ell^q \to \ell^q}^{\cW_t}
	\big)^{2}
	\lesssim \frac{1}{(\log N)^{5/2}} \, \bigg( \frac{\log N}{\theta_N} \bigg)^2
	\lesssim \frac{1}{\theta_N^2 \, \sqrt{\log N}} \,,
\end{equation*}
where we applied \eqref{eq:stimaperQIJ}, \eqref{eq:est3-us} and
$\sigma_{\beta_N}^2 \le \frac{1}{R_N} = O( \frac{1}{\log N})$.
Then, by \eqref{eq:MNh6} and \eqref{eq:mezza},
\begin{equation*}
	\frac{\theta_N^2}{N^4} \, \Xi_\others(2) \le \frac{\theta_N^2}{M} \,  \Xi_\others^\bulk(2) 
	\lesssim \frac{1}{M \, \sqrt{\log N}} \xrightarrow[\,N\to\infty\,]{} 0 \,,
\end{equation*}
which shows that the contribution of $\Xi_\others(2)$ to \eqref{eq:XI-2} is negligible.

\smallskip

It only remains to focus on $\Xi_\pairs(2)$: since 
$\bbE[\xi_{\beta}^{I}] = \sigma_\beta^2$ when $I$ is a pair, we can write
\begin{equation*}
	\Xi_\pairs(2) := \!\! \sum_{\substack{I_1 \ne I_2 \vdash\{1, \ldots, h\} \\
	\text{pairs with full support} } }  
	\sigma_{\beta}^4 \,\,
	 \Big\langle  \widehat \sfq_{L}^{|f|,I_1} \,  , \,
	|\widehat \sfU|^{I_1}_{L, \lambda, \beta} \,\,
	\widehat \sfQ_{L}^{I_{1}, I_2} \,\,
	|\widehat \sfU|_{L, \lambda, \beta}^{I_2} \,\,
	\widebar{\sfq}_{L}^{|g|,I_r}  \Big\rangle \,.
\end{equation*}
Besides inserting $\frac{1}{\cW_t} \cW_t$ as above, 
we also insert $\cV_s^{I_2}\frac{1}{\cV_s^{I_2}}$ on the left of $\widehat \sfQ_{L}^{I_{1}, I_2}$
and $|\widehat \sfU|^{I_1}_{L, \lambda, \beta}$, while we insert
$\frac{1}{\cV_s^{I_1}}\cV_s^{I_1}$ on the right of $\widehat \sfQ_{L}^{I_{1}, I_2}$
and $|\widehat \sfU|^{I_2}_{L, \lambda, \beta}$ (recall \eqref{eq:V}): we thus obtain
\begin{equation} \label{eq:cinquepezzi}
\begin{split}
	\Xi_\pairs(2) \le & \! \sum_{\substack{I_1 \ne I_2 \vdash\{1, \ldots, h\} \\
	\text{pairs with full support} } }  \!\!\!
	\sigma_{\beta}^4 \,\,
	\Big\|  \widehat \sfq_{L}^{|f|,I_1} \tfrac{\cV_s^{I_2}}{\cW_t} \Big\|_{\ell^p} \,
	\Big\| \tfrac{\cW_t}{\cV_s^{I_2}}
	\, |\widehat \sfU|^{I_1}_{L, \lambda, \beta} \, \tfrac{\cV_s^{I_2}}{\cW_t} 
	\Big\|_{\ell^q \to \ell^q} \cdot \\
	& \, \cdot \,  \Big\| \tfrac{\cW_t}{\cV_s^{I_2}} \,
	\widehat \sfQ_{L}^{I_{1}, I_2} \, \tfrac{1}{\cW_t\,\cV_s^{I_1}} \Big\|_{\ell^q \to \ell^q} 
	\, \Big\| \cW_t \, \cV_s^{I_1} \,
	|\widehat \sfU|_{L, \lambda, \beta}^{I_2} \, \tfrac{1}{\cW_t\,\cV_s^{I_1}} \Big\|_{\ell^q \to \ell^q}
	\, \Big\| \cW_t\,\cV_s^{I_1} \, \widebar{\sfq}_{L}^{|g|,I_r}  \Big\|_{\ell^q} \,.
\end{split}
\end{equation}

It remains to estimate these norms. Let us recall that $h=4$,  $p=q=2$
and $t = \frac{1}{\sqrt{N}}$, $s = \frac{1}{\sqrt{L}}$, where $L = \frac{M}{N}$.
We start with the boundary terms:
\begin{itemize}
\item applying the estimate \eqref{eq:est1+}, in view of \eqref{eq:stimetta::}-\eqref{eq:stimetta::2},
we improve the estimate \eqref{eq:stimaperLB}:
\begin{equation}\label{eq:stimaperLB+}
	\max_{\substack{I,J \, \text{pairs} \\ I \ne J}} \,
	\Big\Vert \widehat \sfq_{L}^{|f|,I} 
	\tfrac{\cV_s^J}{\cW_t} \Big\Vert_{\ell^2}
	\le 6 \, \scrC^4 \, \frac{L}{s} \, \bigg\|\frac{f}{w_{t}}\bigg\|_{\ell^\infty}^2
	\, \bigg\|\frac{f}{w_{t}}\bigg\|_{\ell^2}^{2}
	\lesssim L^{\frac{3}{2}} \, N
	\lesssim \frac{N^{\frac{5}{2}}}{M^{\frac{3}{2}}} \,;
\end{equation}

\item applying the estimate \eqref{eq:est4+}, since $g \equiv 1$,
we improve the estimate \eqref{eq:stimaperRB}:
\begin{equation}\label{eq:stimaperRB+}
	\max_{\substack{I,J\,\text{pairs}\\ I \ne J}} \,
	\big\Vert\, \cW_t \, \cV_s^I \, \overline{\sfq}_{L}^{|g|,J}
	\big\Vert_{\ell^2}\le \big( 12
	\, \overline{\scrC}\,\big)^4  \,
	\frac{\|g\|^4_{\ell^\infty}}{s \,t^{2}} 
	\lesssim \sqrt{L} \, N
	\lesssim \frac{N^{\frac{3}{2}}}{\sqrt{M}} \,.
\end{equation}
\end{itemize}
Overall, the product of the two boundary terms is $\lesssim \frac{N^4}{M^2}$, which
improves on the previous estimates by an essential factor $\frac{1}{M}$,
thanks to the use of the restricted weight $\cV_s^I$.

We next estimate the bulk terms:
\begin{itemize}
\item applying \eqref{eq:est2V} with $p=q=2$ and $h=4$, we obtain
an analogue of \eqref{eq:stimaperQIJ}:
\begin{equation} \label{eq:stimaperQIJ+} 	
	\max_{\substack{I, J \, \text{pairs}\\ I \ne J}} \, \big\Vert \tfrac{\cW_t}{\cV_s^J} 
	\, \widehat \sfQ_{L}^{I,J} \, \tfrac{1}{\cW_t \, \cV_s^I} \big\Vert_{\ell^2 \to \ell^2}
	\le 4! \, \widehat{\widehat{\mathscr{C}\,}}^4 \, 4 \lesssim 1 \,;
\end{equation} 

\item applying \eqref{eq:est3+} for both $\tau = +1$ and $\tau = -1$,
we obtain an analogue of \eqref{eq:est3-us}:
\begin{equation}\label{eq:est3-us+}
	\max_{I, J \, \text{pairs}} \, \big\Vert \,
	(\cV_s^J)^{\tau} \, \cW_t \, |\widehat \sfU|_{L,\lambda, \beta}^{I}
	\, \tfrac{1}{\cW_t \, (\cV_s^J)^{\tau}} \, \big\Vert_{\ell^2 \to \ell^2}
	\le 1 + \widecheck{\widecheck{\mathscr{C}\,}}^4  \frac{ \sigma_{\beta_N}^2 \, R_L}
	{1- \sigma_{\beta_N}^2 \, R_L} 
	\lesssim \frac{\log N}{\theta_N}  \,.
\end{equation}
\end{itemize}
Plugging the previous estimates into \eqref{eq:cinquepezzi},
since $\sigma_{\beta_N}^2 \le \frac{1}{R_N} = O(\frac{1}{\log N})$, we finally obtain
\begin{equation*}
	\Xi_\pairs(2) \lesssim \frac{1}{(\log N)^2} \, 
	\frac{N^{\frac{5}{2}}}{M^{\frac{3}{2}}}
	\, \bigg(\frac{\log N}{\theta_N}\bigg)^2
	\, \frac{N^{\frac{3}{2}}}{\sqrt{M}} = \frac{N^4}{M^2 \, \theta_N^2} \,,
\end{equation*}
which completes the proof of \eqref{eq:XI-2}, hence of Proposition~\ref{prop:stimamomento}.\qed

\appendix

\section{Some technical proofs}
\label{sec:4thapp}

We give the proof of Theorem~\ref{th:moments-exact}.
We recall that the averaged partition function $\cZ_{L,\beta}^{\omega}(f,g)$ is defined
in \eqref{eq:Zgen}-\eqref{eq:Zgenav}.
In analogy with \eqref{eq:partfunc} and \eqref{eq:Zexplicit},
by \eqref{eq:Zgen}-\eqref{eq:Zgenav} we can write
\begin{equation}\label{eq:partfuncgen}
\begin{split}
	\cZ_{L,\beta}^{\omega}(f,g) - \bbE[\cZ_{L,\beta}^{\omega}(f,g)]
	= \, & \sum_{k=1}^\infty 
	\ \sum_{\substack{0<n_1<\ldots<n_k < L\\
	x_1,\ldots,x_k \in \Z^2}} \, q_{n_1}^{f}(x_1) \, \xi_\beta(n_1, x_1) \times \\
	& \times \bigg\{ \prod_{j=2}^{k} q_{n_j-n_{j-1}}(x_j-x_{j-1}) \, \xi_\beta(n_j,x_j)
	\bigg\} \, q_{L-n_k}^{g}(x_k)\,,
\end{split}
\end{equation}
where we recall the random walk kernels \eqref{eq:defofq} and 
\eqref{eq:kernel1}. Recalling \eqref{eq:Mkdef}, we obtain
\begin{equation}\label{eq:fold}
	\begin{aligned}
	\cM_{L,\beta}^{h}(f,g) = \bbE \Bigg [ \Bigg ( \sum_{k=1}^\infty 
	& \ \sum_{\substack{0<n_1<\ldots<n_k < L\\
	x_1,\ldots,x_k \in \Z^2}} \, q_{n_1}^{f}(x_1)\, \xi_\beta(n_1, x_1) \times \\
	& \qquad \times \bigg\{ \prod_{j=2}^{k} q_{n_j-n_{j-1}}(x_j-x_{j-1}) \, \xi_\beta(n_j,x_j)
	\bigg\} \, q_{L-n_k}^{g}(x_k) \Bigg)^{\!\!h} \, \Bigg] \,.
	\end{aligned}
\end{equation}

When we expand the $h$-th power, we obtain 
a sum over $h$ families of space-time points
$A_i := \{(n^i_1, x^i_1), \ldots, (n^i_{k_i}, x^i_{k_i})\}$ for $i=1, \ldots, h$.
These points must \emph{match at least in pairs}, 
i.e.\ any point $(n^i_\ell,x^i_\ell)$ in any family $A_i$
must coincide with at least another point $(n^j_m, x^j_m)$ in a different family
$A_j$ for $j \ne i$,
otherwise the expectation
vanishes (since $\xi_{\beta}(n,x)$ are independent and centered).
In order to handle this constraint, following \cite[Theorem 6.1]{CSZ23},
we rewrite \eqref{eq:fold} by first \emph{summing over the set of all space-time points}
\begin{equation*}
	A := \bigcup_{i=1}^h A_i = \bigcup_{i=1}^h \{(n^i_1, x^i_1), \ldots, (n^i_{k_i}, x^i_{k_i})\}
	\subseteq \N \times \Z^2
\end{equation*}
and then specifying \emph{which families} each point $(n,x) \in A$ belongs to.

Let us fix the \emph{time coordinates} $n_1 < \ldots < n_r$ of the points in $A$. 
For each such time $n \in \{n_1, \ldots, n_r\}$, we have $(n,x) \in A$
for one or more $x \in \Z^2$ (there are at most $h/2$ such~$x$,
by the matching constraint described above).
We then make the following observations:
\begin{itemize}
\item if $(n,x) = (n^i_j, x^i_j)$ belongs to the family~$A_i$, then
we have in \eqref{eq:fold} the product of a random walk kernel ``entering'' $(n,x)$
and another one ``exiting'' $(n,x)$:
\begin{equation*}
	q_{n - n^i_{j-1}}(x - x_{j-1}^i) \cdot q_{n^i_{j+1} - n}(x_{j+1}^i - x) \,;
\end{equation*}

\item if $(n,x)$ does \emph{not} belong to the family $A_i$, then
we have in \eqref{eq:fold} a random walk kernel ``jumping over time~$n$'',
say $q_{n^i_j - n^i_{j-1}}(x_j - x_{j-1})$ with $n^i_{j-1} < n < n^i_j$: we
can split this kernel at time~$n$ by Chapman-Kolmogorov, writing
\begin{equation} \label{eq:CK}
	q_{n^i_j - n^i_{j-1}}(x^i_j - x^i_{j-1}) = \sum_{z\in\Z^2}
	q_{n - n^i_{j-1}}(z - x^i_{j-1}) \cdot q_{n^i_j - n}(x^i_j - z) \,.
\end{equation}
\end{itemize}
Then, to each time $n \in \{n_1, \ldots, n_r\}$, we can associate
a vector $\by = (y^1, \ldots, y^h) \in (\Z^2)^h$ with $h$ space coordinates,
where
$y^i = x$ if the family $A^i$ contains $(n,x)$ and $y^i = z$ from \eqref{eq:CK} otherwise.
The constraint that a point $(n,x) \in A$ belongs to two families
$A^i$ and $A^{i'}$
means that the corresponding coordinates of the vector $\by$ must coincide:
$y^i = y^{i'}$.  In order to specify which families $A^i$
share the same points, we assign a \emph{partition} $I 
\vdash \{1,\ldots, h\}$ to each time $n \in \{n_1, \ldots, n_r\}$
and we require that $\by \sim I$, see \eqref{eq:xsimI}.

We are now ready to provide a convenient rewriting of \eqref{eq:fold}
by first summing over the number $r \ge 1$ and the time coordinates
$n_1 < \ldots < n_r$, then on the corresponding space coordinates $\by_1, \ldots, \by_r$
and partitions $I_1, \ldots, I_r \vdash \{1,\ldots, h\}$ with $\by_i \sim I_i$.
Recalling the definitions of $\sfQ_n^{I,J}$ and $\sfq_n^{f,J}$ from \eqref{eq:Qdef2},
we can rewrite \eqref{eq:fold} as follows:
\begin{equation}\label{eq:MNh3}
\begin{aligned}
	\cM_{L,\beta}^{h}(f,g)
	= \ \sum_{r=1}^\infty \
	\sum_{\substack{0 < n_1<\cdots <n_r < L \\
	\by_1, \ldots, \by_r \in (\Z^2)^h}} \ &
	\sum_{\substack{I_1, \ldots, I_r \vdash\{1, \ldots, h\} \\
	\text{with full support} \\
	\text{and } I_i \ne * \ \forall i}}  \
	\sfq_{n_1}^{f,I_1} (\by_1) \, \bbE[\xi_{\beta}^{I_1}] \, \times \\
	& \times \bigg\{ \prod_{i=2}^r \sfQ_{n_i - n_{i-1}}^{I_{i-1}, I_i}(\by_{i-1}, \by_i)
	\,  \mathbb E[\xi_{\beta}^{I_i}] \bigg\}
	\, \sfq_{L - n_r}^{g,I_r} (\by_r) \,.
\end{aligned}
\end{equation}

Finally, formula \eqref{eq:MNh4} follows from \eqref{eq:MNh3}
grouping together stretches of \emph{consecutive repeated partitions},
i.e.\ when $I_i = J$ for consecutive indexes~$i$. 
The kernel $\sfU_{m-n,\beta}^J(\bz,\bx)$ from \eqref{eq:sfU}
does exactly this job, which leads to \eqref{eq:MNh4}.\qed

\begin{remark}
Formula \eqref{eq:MNh4} still contains the product of $\bbE[\xi_\beta^{I_i}]$ because
these factors from \eqref{eq:MNh3} are only partially absorbed in $\sfU_{m-n,\beta}^J(\bz, \bx)$:
indeed, in \eqref{eq:sfU} we have $k+1$ points 
$n_0 < n_1 < \ldots < n_k$, but the factor $\bbE[\xi_\beta^J]$ therein is only raised to the power~$k$.
\end{remark}

\section{Random walk bounds}
\label{sec:rwapp}

In this section we prove the random walk bounds from
Lemmas~\ref{lem:weighted-rw}, \ref{lem:weighted-averaged-rw} and~\ref{lem:max-averaged-rw}.
We also prove a heat kernel bound, see Lemma~\ref{lem:heat-kernel-bound} below.

\subsection{Proof of Lemma~\ref{lem:weighted-rw}}
We prove each of the four bounds in \eqref{eq:rw-weight}-\eqref{eq:rw-weight+}
for a different constant $\sfc$ (it then suffices to take the
maximal value).

The first bound in \eqref{eq:rw-weight} with $\sfc = c$ follows by \eqref{eq:sub-Gaussian},
thanks to the independence of the increments of the random walk.
This directly implies the first bound in \eqref{eq:rw-weight+}: it suffices to estimate 
$\sum_{x\in\Z^2} \rme^{t |x|} \, q_n(x) \le \sum_{x\in\Z^2} \rme^{2t |x^1|} \, q_n(x)$
(by $|x| \le |x^1| + |x^2|$, Cauchy-Scwharz and symmetry) and then
$\rme^{|z|} \le \rme^{z} + \rme^{-z}$, hence
$\sum_{x\in\Z^2} \rme^{t |x|} \, q_n(x) \le 2 \, \rme^{2 \sfc t^2 n}$.

To get the second bound in \eqref{eq:rw-weight+}, we fix $\ell < n$ and write 
$q_{n}(x) = \sum_{y\in\Z^2} q_\ell(y) \, q_{n-\ell}(x-y)$ by Chapman-Kolmogorov.
We next decompose the sum in the two parts $\langle y,x\rangle > \frac{1}{2}|x|^2$
and $\langle y,x\rangle \le \frac{1}{2}|x|^2$: renaming $y$ as $x-y$ in the second 
part, we obtain
\begin{equation} \label{eq:usedeco}
	q_{n}(x) \le \sum_{y\in\Z^2: \, \langle y,x\rangle \ge \frac{1}{2}|x|^2} 
	\big\{ q_\ell(y) \, q_{n-\ell}(x-y) + q_{n-\ell}(y) \, q_\ell(x-y) \big\} \,.
\end{equation}
We can bound $q_k(x-y) \le \sup_{z\in\Z^2} q_k(z) \le \frac{\sfc}{k}$ by the local limit theorem
(any random walk satysfying Assumption~\ref{ass:rw}
is in $L^2$ with zero mean).
We next observe that $\langle y,x\rangle \ge \frac{1}{2}|x|^2$ implies $|x| \le 2 |y|$
by Cauchy-Schwarz, therefore the first bound in \eqref{eq:rw-weight+} yields
\begin{equation*}
	\forall x \in \Z^2: \qquad
	\rme^{t |x|} \, q_n(x)  \le \sfc \sum_{y\in\Z^2} 
	\rme^{2t |y|} \bigg\{ \frac{q_\ell(y)}{n-\ell} + \frac{q_{n-\ell}(y)}{\ell} \bigg\}
	\le \frac{2\sfc \, \rme^{8\sfc t^2 n}}{\min\{n-\ell,\ell\}} \,.
\end{equation*}
If we choose $\ell = \lfloor \frac{n}{2}\rfloor$, we obtain
the second bound in \eqref{eq:rw-weight+} renaming~$\sfc$.

It remains to prove the second bound in \eqref{eq:rw-weight}.
We first note that $q_n(x)^2/q_{2n}(0) \le \sfc \, q_n(x)$ 
for some $\sfc \in [1, \infty)$, because $q_n(x)^2 \le \|q_n\|_{\ell^\infty} \, q_n(x)$ and
$\|q_n\|_{\ell^\infty} \le \sfc \, q_{2n}(0)$ by the local limit theorem. 
Since $q_n(x) = q_n(-x)$, we get
\begin{equation*}
\begin{split}
	\sum_{x\in\Z^2} \rme^{t x^a} \, \frac{q_n(x)^2}{q_{2n}(0)} - 1
	&= \sum_{x\in\Z^2} \Big(\tfrac{\rme^{t x^a} + \rme^{-t x^a}}{2} - 1\Big)
	\, \frac{q_n(x)^2}{q_{2n}(0)} 
	\le \sfc \sum_{x\in\Z^2} \Big(\tfrac{\rme^{t x^a} + \rme^{-t x^a}}{2} - 1\Big) \, q_n(x) \\
	&\le \sfc \, \big( \rme^{\sfc \frac{t^2}{2} n}-1 \big)
	= \sfc \sum_{k=1}^\infty \tfrac{1}{k!} \, \big(\sfc \,\tfrac{t^2}{2}n\big)^k
	\le \sum_{k=1}^\infty \tfrac{1}{k!} \, \big(\sfc^2\,\tfrac{t^2}{2}n\big)^k
	= \rme^{\sfc^2 \frac{t^2}{2} n}-1 \,,
\end{split}
\end{equation*}
which proves the second bound in \eqref{eq:rw-weight}
if we rename $\sfc^2$ as $\sfc$.\qed

\smallskip
\subsection{Proof of Lemma~\ref{lem:weighted-averaged-rw}}

For any $y\in\Z^2$ and $p \in [1,\infty]$ we can write, recalling \eqref{eq:kernel1},
\begin{equation*}
	\frac{q_n^{f}(y)}{w_{t}(y)}
	= q_n^f(y) \, \rme^{t|y|} \le \sum_{z\in\Z^2} \rme^{t  |z|} \, |f(z)| \,
	\big\{ \rme^{t  |y-z|} \, q_n(y-z) \big\}
	\le \bigg\| \frac{f}{w_{t}} \bigg\|_{\ell^p} \,
	\bigg\| \frac{q_n}{w_{t}} \bigg\|_{\ell^q}  \,,
\end{equation*}
where $q \in [1,\infty]$ is such that $\frac{1}{p}+\frac{1}{q}=1$. Since
$\| \frac{q_n}{w_{t}} \|_{\ell^q}^q 
\le \| \frac{q_n}{w_{t}} \|_{\ell^\infty}^{q-1} \, \| \frac{q_n}{w_{t}} \|_{\ell^1}$,
it suffices to apply the bounds in \eqref{eq:rw-weight+}
to obtain the second bound in \eqref{eq:weighted-averaged-rw2}.

We next prove the first bound in \eqref{eq:weighted-averaged-rw2},
assuming $p \in [1,\infty)$: we have, by Hölder,
\begin{equation}\label{eq:qbound}
\begin{split}
	\bigg| \frac{q_n^{f}(x)}{w_{t}(x)} \bigg|^p \!
	& = 
	\bigg| \sum_{z\in\Z^2} \frac{f(z)}{w_t(z)} \, q_n(x-z) \, \frac{w_t(z)}{w_t(x)} \bigg|^p \\
	& \le \Bigg\{
	\sum_{z\in\Z^2} \frac{|f(z)|^p}{w_t(z)^p} \, q_n(x-z) \, \frac{w_t(z)}{w_t(x)} \Bigg\} \,
	\Bigg\{ \sum_{z\in\Z^2} q_n(x-z) \, \frac{w_t(z)}{w_t(x)}  \Bigg\}^{p-1} \\
	& \le \big( \sfc \, \rme^{2 \sfc t^2 n} \big)^{p-1} \, 
	\sum_{z\in\Z^2} \frac{|f(z)|^p}{w_t(z)^p} \, q_n(x-z) \, \frac{w_t(z)}{w_t(x)} \,,
\end{split}
\end{equation}
where the last inequality holds by the first bound in \eqref{eq:rw-weight+},
since $\frac{w_t(z)}{w_t(x)} \le \rme^{t|x-z|}$.
Summing over $x$ and applying again \eqref{eq:rw-weight+}, 
we obtain the first bound in \eqref{eq:weighted-averaged-rw2}.\qed

\subsection{Proof of Lemma~\ref{lem:max-averaged-rw}}\label{sec:max-averaged-rw}
Given a real function $G$, we set
$\{G > \lambda\} := \{y \colon G(y) > \lambda\}$
for $\lambda \in \R$,
and we denote by $|A|$ the cardinality of a set~$A$.
Let us define the constant
\begin{equation}\label{eq:sfC}
	\sfC := 200 \pi \, \sfc^2 \, \rme^{4\sfc \, t^2 L} \,.
\end{equation}
We are going to show that
\begin{align}
	\label{eq:toMa1}
	\Big\| \max_{1\le n \le L} 
	\, \big| q^{\otimes m, F}_n \, 
	w_t^{\otimes m} \big|\, \Big\|_{\ell^\infty} 
	&\le \sfC^m \, \big\| F \, w_t^{\otimes m} \big\|_{\ell^\infty} \,, \\
	 \forall \lambda > 0 : \qquad
	\Big| \Big\{  \max_{1\le n \le L} |q^{\otimes m, F}_n \, 
	w_t^{\otimes m} | > \lambda \Big\} \Big| 
	\label{eq:toMa2}
	& \le (25 \, \sfC)^m \, \frac{\|F \, w_t^{\otimes m}\|_{\ell^1}}{\lambda} \,.
\end{align}
Note that \eqref{eq:toMa1} implies our goal \eqref{eq:max-averaged-rw} for $p=\infty$,
while \eqref{eq:toMa2} means that the sub-linear operator
$F \mapsto \max_{1\le n \le L} \big| q^{\otimes m, F}_n \, w_t^{\otimes m} \big|$
is of \emph{weak type $(1,1)$}, see \cite{cf:Gra}. Then, for every $1 < p < \infty$,
our goal \eqref{eq:max-averaged-rw} where  $\overline{\scrC} = 25 \,\sfC$ follows
by \emph{Marcinkiewicz's Interpolation Theorem}, see
\cite[Theorem~1.3.2 and Exercise~1.3.3(a)]{cf:Gra}.

We now prove \eqref{eq:toMa1} and \eqref{eq:toMa2}.
For any dimension $d\in\N$,
we denote  by $\cB^{d}(\bx,r)$ the set of integer points in the Euclidean ball in $\R^{d}$
with center $\bx \in \Z^d$ and radius $r > 0$:
\begin{equation} \label{eq:BcB}
	\cB^{d}(\bx,r) := \big\{\by\in \Z^d: \, |\by - \bx| = \sqrt{(y_1 - x_1)^2 +
	\ldots + (y_d - x_d)^2} \le r \big\} \,.
\end{equation}
We focus on the case $d = 2m$ and we write $\bx = (x_1, \ldots, x_m)$
with $x_i \in \Z^2$.
Given a function $F: (\Z^2)^m \to \R$, we define the
\emph{maximal function} $\cM^F: (\Z^2)^m \to [0,\infty]$ by
\begin{equation}\label{eq:Mg}
	\cM^F(\bx) := \sup_{0 < r < \infty}
	\Bigg\{ \frac{1}{|\cB^{2 m}(\bx,r)|}
	\sum_{\by \in \cB^{2m}(\bx,r)} |F(\by)| \Bigg\} \,.
\end{equation}
We are going to prove the following discrete
version of \emph{Hardy-Littlewood maximal inequality}:
\begin{equation} \label{eq:HL}
	\forall \lambda > 0: \qquad
	| \{ \cM^{F} > \lambda \} | \le 25^m \, \frac{\|F\|_{\ell^1}}{\lambda} \,.
\end{equation}
We are also going to prove the following upper bound:
for any $m\in\N$, $L\in\N$, $x \in \Z^2$,
\begin{equation}\label{eq:ub-maximal}
	\max_{1\le n \le L} |q^{\otimes m, F}_n(\bx) \, 
	w_t^{\otimes m}(\bx)|
	\le \sfC^m \,  \cM^{F w_t^{\otimes m}} (\bx) 
\end{equation}
Since clearly $\|\cM^G\|_{\ell^\infty} \le \|G\|_{\ell^\infty}$,
this directly implies \eqref{eq:toMa1} and, coupled to \eqref{eq:HL},
also \eqref{eq:toMa2}.
To complete the proof, it only remains to prove \eqref{eq:HL} and \eqref{eq:ub-maximal}.

\smallskip
\paragraph{\it Proof of \eqref{eq:HL}}
We follow closely the classical proof of the Hardy-Littlewood
maximal inequality, see \cite[Theorem~2.1.6]{cf:Gra}, which is stated on $\R^d$ instead of $\Z^d$. 
By definition of $\cM^F$,
see \eqref{eq:Mg}, for every point $\bx \in \{\cM^F > \lambda\}$ there
is $r_\bx > 0$ such that
\begin{equation}\label{eq:tobo}
	\sum_{\by\in \cB^{2m}(\bx,r_\bx)} |F(\by)| > \lambda \, 
	|\cB^{2m}(\bx,r_\bx)| \,.
\end{equation}
It suffices to fix any \emph{finite} set $K \subseteq \{\cM^F > \lambda\}$ and prove
that \eqref{eq:HL} holds with the LHS replaced by $|K|$. From the family of
balls $\cF := \{ \cB^{2m}(\bx,r_\bx): \, \bx \in K\}$ we extract a \emph{disjoint sub-family}
$\cF' := \{ \cB^{2m}(\bz,r_\bz): 
\, \bz \in K'\}$ with $K' \subseteq K$ by the greedy algorithm,
see \cite[Lemma~2.1.5]{cf:Gra}:
we first pick the ball of largest radius,
then we select the ball of largest radius among the remaining ones \emph{which do not intersect 
the balls that have already been picked}, and so on. By construction, if
a ball $\cB^{2m}(\bx,r_\bx)$ 
is \emph{not} included in $\cF'$, then it must overlap with some ball $\cB^{2m}(\bz,r_\bz)$ 
of larger radius $r_\bz \ge r_\bx$, therefore $\cB^{2m}(\bx,r_\bx) 
\subseteq \cB^{2m}(\bz, 3r_\bz)$.
In other terms, \emph{tripling the radii of the balls in $\cF'$ we cover all the balls
in $\cF$}, hence
\begin{equation*}
	|K| \le \sum_{\bz \in K'} |\cB^{2m}(\bz, 3r_\bz)| \,.
\end{equation*}
We prove below that, for any dimension $d\in\N$, $\bz \in \Z^{d}$ and $r > 0$,
\begin{equation}\label{eq:provebelow}
	|\cB^{d}(\bz,3r)| \le 5^d \, |\cB^{d}(\bz,r)| \,.
\end{equation}
Setting $d=2m$ and applying \eqref{eq:tobo}, we then obtain \eqref{eq:HL}:
\begin{equation*}
	|K| \le 25^m \sum_{z \in K'} |\cB^{2m}(\bz, r_\bz)|
	\le \frac{25^m}{\lambda} \sum_{z \in K'} \sum_{\by\in \cB^{2m}(\bz, r_\bz)} |F(\by)| 
	\le \frac{25^m}{\lambda} \, \|F\|_{\ell^1} \,,
\end{equation*}
where the last inequality holds because the balls
$\cB^{2m}(\bz,r_\bz)$ for $\bz \in K'$ are disjoint.

It remains to prove \eqref{eq:provebelow}. We fix $\bz = 0$
and we proceed by induction on~$d\in\N$.
\begin{itemize}
\item The case $d=1$ is proved by direct computation.
Note that $\cB^1(0,r) = \{-\lfloor r \rfloor,  \ldots, \lfloor r \rfloor\}$, hence
$|\cB^1(0,r)| = 2\lfloor r \rfloor+1$. For $0 \le r < 1$
we have $|\cB^1(0,r)| = 1$ while $|\cB^1(0,3r)| \le 5$, 
therefore \eqref{eq:provebelow} holds (as an \emph{equality} for $\frac{2}{3} \le r < 1$). 
More generally, given $k \in \N_0$,
for $k \le r < k+1$ we have $\lfloor r \rfloor = k$
and $\lfloor 3r \rfloor \le 3k+2$, therefore
$|\cB^1(0,r)| = 2k+1$
while $|\cB^1(0,3r)|  \le 2 \, (3k+2) + 1 = 6k+5$, which yields
\begin{equation*}
	\frac{|\cB^1(0,3r)|}{|\cB^1(0,r)|} \le \frac{6k+5}{2k+1}
	= 3 + \frac{2}{2k+1} \le 3 + 2 = 5 \,.
\end{equation*}
\item We next assume that \eqref{eq:provebelow} is proved
for some $d\in\N$ and we prove it for $d+1$. Recalling \eqref{eq:BcB}
and writing $\by = (y_1, \ldots, y_d)$, we sum over the possible values
of $y := y_1$ to write
\begin{equation} \label{eq:Bd}
	|\cB^{d+1}(0,r)| = \sum_{y \in \{-\lfloor r \rfloor, \ldots, \lfloor r \rfloor\}}
	|\cB^{d}(0,\sqrt{r^2 - y^2})| \,.
\end{equation}
In particular, replacing $r$ by $3r$ and applying the induction assumption \eqref{eq:provebelow}, we get
\begin{equation*}
\begin{split}
	|\cB^{d+1}(0,3r)| 
	& \le 5^d \sum_{y \in \{-\lfloor 3r \rfloor, \ldots, \lfloor 3r \rfloor\}}
	|\cB^{d}(0,\sqrt{r^2 - (y/3)^2})| \\
	& \le 5^d \sum_{y \in \{-\lfloor 3r \rfloor, \ldots, \lfloor 3r \rfloor\}}
	|\cB^{d}(0,\sqrt{r^2 - [ y/3]^2})| \,,
\end{split}
\end{equation*}
where in the last inequality we increased
the radius $\sqrt{r^2 - (y/3)^2}$ replacing $y/3$ by $[ y/3 ]$
\emph{defined as $\lfloor y/3 \rfloor$ for $y \ge 0$ and 
as $\lceil y/3 \rceil$ for $y < 0$}, so that $|[y/3]| \le |y/3|$.
We finally note that, as $y$ ranges in $\{-\lfloor 3r \rfloor, \ldots, \lfloor 3r \rfloor\}$,
the variable $\tilde y := [y/3]$ ranges in $\{-\lfloor r \rfloor, \ldots, \lfloor r \rfloor\}$,
and each value of $\tilde y$ comes either~$3$ or~$5$ values of $y$.\footnote{Indeed, 
$\tilde y = 0$ comes from $y \in \{-2,-1,0,1,2\}$, while
$\tilde y = \ell > 0$ comes from $y \in \{3\ell, 3\ell+1, 3\ell+2\}$,
and similarly for $\tilde y = \ell < 0$.}
We thus obtain, recalling \eqref{eq:Bd},
\begin{equation*}
	|\cB^{d+1}(0,3r)| 
	\le 5^d \cdot 5 \sum_{\tilde y \in \{-\lfloor r \rfloor, \ldots, \lfloor r \rfloor\}}
	|\cB^{d}(0,\sqrt{r^2 - \tilde y^2})|
	= 5^{d+1} \, |\cB^{d+1}(0,r)|  \,,
\end{equation*}
which completes the proof of \eqref{eq:provebelow}.
\end{itemize}

\smallskip

\paragraph{\it Proof of \eqref{eq:ub-maximal}}
We claim that for all $1 \le n \le L$ and $x\in\Z^2$
\begin{equation} \label{eq:densitybound}
	q_n(x) \, \rme^{t |x|} \le
	\frac{\sfC'}{n} \, \rme^{-\frac{|x|^2}{16 \sfc \, n}} \qquad
	\text{where} \quad \sfC' := 6\sfc \, \rme^{4\sfc \, t^2 L} \,.
\end{equation}
Indeed, we prove in Lemma~\ref{lem:heat-kernel-bound} below that
$q_n(x) \le \frac{6\sfc}{n} \, \rme^{-\frac{|x|^2}{8\sfc\, n}}$, see \eqref{eq:heat-kernel-bound}, therefore
\begin{equation*}
	q_{n}(x) \, \rme^{t|x|} \le \frac{6\sfc}{n} \, \rme^{t|x| -\frac{|x|^2}{8\sfc\, n}}
	\le \frac{6\sfc}{n} \, \rme^{-\frac{|x|^2}{16\sfc\, n}}
	\cdot \Big( \sup_{\gamma \ge 0} \rme^{t\gamma -\frac{\gamma^2}{16\sfc\, n}} \Big)
	= \frac{6\sfc}{n} \, \rme^{-\frac{|x|^2}{16\sfc\, n}} \, \rme^{4\sfc\, t^2 n} \,,
\end{equation*}
which shows that \eqref{eq:densitybound} holds for $n \le L$.

Let us now deduce \eqref{eq:ub-maximal} from \eqref{eq:densitybound}.
Since $\frac{w_t(x)}{w_t(z)} \le \rme^{t|x-z|}$,
by \eqref{eq:kernel1m} and \eqref{eq:densitybound} we get
\begin{equation*}
\begin{split}
	|q^{\otimes m, F}_n(\bx) \, w_t^{\otimes m}(\bx)|
	& \le \sum_{\bz \in (\Z^2)^m} |F(\bz)|
	\, w_t^{\otimes m}(\bz) \, \prod_{i=1}^m 
	q_n(x_i-z_i) \, \rme^{t|x_i-z_i|} \\
	& \le \bigg(\frac{\sfC'}{n}\bigg)^m \, \sum_{\bz \in (\Z^2)^m} |F(\bz)|
	\, w_t^{\otimes m}(\bz) \, 
	\rme^{-\frac{|\bx - \bz|^2}{16 \sfc \, n}} \,,
\end{split}
\end{equation*}
where $|\bx - \bz|^2 = \sum_{i=1}^m |x_i - z_i|^2$ is the Euclidean norm
on $(\R^2)^m$. Recalling \eqref{eq:BcB}, we write
\begin{equation*}
	\rme^{-\frac{|\bx - \bz|^2}{16 \sfc \, n}}
	= \int_0^1 \dd s \, \ind_{\{s \le \rme^{-\frac{|\bx - \bz|^2}{16 \sfc \, n}}\}} 
	= \int_0^1 \dd s \, \ind_{\{\bz \in \cB^{\otimes m}(\bx, r_{n,s})\}} \qquad
	\text{with} \quad r_{n,s} := 16\sfc \, n \, \log \tfrac{1}{s} \,,
\end{equation*}
therefore, recalling \eqref{eq:Mg}, we get
\begin{equation*}
\begin{split}
	|q^{\otimes m, F}_n(\bx) \, w_t^{\otimes m}(\bx)|
	&\le \bigg(\frac{\sfC'}{n}\bigg)^m
	\, \cM^{F w_t^{\otimes m}}(\bx) \cdot
	\int_0^1 \dd s \, \big| \cB^{\otimes m}(\bx, r_{n,s}) \big| \,.
\end{split}
\end{equation*}
Since
$\big| \cB^{\otimes m}(\bx, r_{n,s}) \big| = \sum_{\bz \in (\Z^2)^m}
\ind_{\{s \le \rme^{-\frac{|\bx - \bz|^2}{16 \sfc \, n}}\}} = \sum_{\by \in (\Z^2)^m}
\ind_{\{s \le \rme^{-\frac{|\by|^2}{16 \sfc \, n}}\}} $, we finally obtain
\begin{equation*}
	|q^{\otimes m, F}_n(\bx) \, w_t^{\otimes m}(\bx)|
	\le \bigg(\frac{\sfC'}{n} \, \sum_{y \in \Z^2} \rme^{-\frac{|y|^2}{16 \sfc \, n}} \bigg)^m
	\, \cM^{F w_t^{\otimes m}}(\bx) \,,
\end{equation*}
and it remains to show that the term in parenthesis is
at most $\sfC$, see \eqref{eq:ub-maximal}.
By monotonicity
\begin{equation*}
	\sum_{a\in\Z} \rme^{-\frac{a^2}{16\sfc \, n}} \le 1 +
	\int_\R \rme^{-\frac{x^2}{16\sfc \, n}} \, \dd x = 1 + \sqrt{16 \pi \, \sfc \, n} \,,
\end{equation*}
hence writing $y = (a,b)$, so that $|y|^2 = a^2 + b^2$, we obtain
\begin{equation*}
	\frac{\sfC'}{n} \, \sum_{y \in \Z^2} \rme^{-\frac{|y|^2}{16 \sfc \, n}}
	= \frac{\sfC'}{n} \bigg( \sum_{a\in\Z} \rme^{-\frac{a^2}{16\sfc\, n}} \bigg)^2
	\le \sfC' \, \frac{2(1 + 16 \pi \, \sfc \, n)}{n}
	\le (2 + 32\pi) \,  \sfc \,  \sfC' \le 33\pi \,  \sfc \,  \sfC' \,,
\end{equation*}
where the second last inequality holds by $n\ge 1 $ and $\sfc \ge 1$.
Since $33\pi \, \sfc \, \sfC' \le \sfC$, see \eqref{eq:sfC} and \eqref{eq:densitybound},
the proof is completed.\qed

\begin{lemma}[Heat kernel bound]\label{lem:heat-kernel-bound}
Let Assumption~\ref{ass:rw} hold and let $\sfc$ be the constant from Lemma~\ref{lem:weighted-rw}.
Then for every $n\in\N$ and $x \in \Z^2$ we have
\begin{equation}\label{eq:heat-kernel-bound}
	q_n(x) \le \frac{6\sfc}{n} \, \rme^{-\frac{|x|^2}{8\sfc n}} \,.
\end{equation}
\end{lemma}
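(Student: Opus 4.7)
The proof is essentially a one-line Chernoff-style optimization using the weighted pointwise bound already established in the second estimate of \eqref{eq:rw-weight+}, namely
\[
	\sup_{z\in\Z^2} \rme^{t|z|}\, q_n(z) \;\le\; \frac{\sfc\,\rme^{2\sfc t^2 n}}{n}\qquad \text{for every } t\ge 0,\ n\in\N.
\]
In particular, fixing $n\in\N$ and $x\in\Z^2$, we get that for every $t\ge 0$,
\[
	q_n(x) \;\le\; \frac{\sfc}{n}\,\exp\bigl(2\sfc\, t^2 n - t|x|\bigr).
\]

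The plan is to optimize the exponent over $t\ge 0$. The quadratic $t\mapsto 2\sfc t^2 n - t|x|$ attains its minimum on $[0,\infty)$ at $t^\ast = |x|/(4\sfc n)\ge 0$, where its value equals $-|x|^2/(8\sfc n)$. Substituting $t = t^\ast$ yields
\[
	q_n(x) \;\le\; \frac{\sfc}{n}\,\exp\!\Big(-\tfrac{|x|^2}{8\sfc n}\Big) \;\le\; \frac{6\sfc}{n}\,\exp\!\Big(-\tfrac{|x|^2}{8\sfc n}\Big),
\]
which is the desired bound (with plenty of slack in the prefactor, as $\sfc \le 6\sfc$).

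There is no real obstacle: the heat-kernel bound comes essentially for free once the weighted $L^\infty$ bound \eqref{eq:rw-weight+} is in hand, and the latter has already been derived in the proof of Lemma~\ref{lem:weighted-rw} via a Chapman–Kolmogorov splitting combined with the sub-Gaussian moment estimate \eqref{eq:sub-Gaussian}. The only mildly delicate point to double-check is that $t^\ast = |x|/(4\sfc n)$ is indeed non-negative (which it is, since $\sfc\ge 1$), so that we are allowed to use \eqref{eq:rw-weight+} at this value of $t$; and that the stated constants in \eqref{eq:heat-kernel-bound} are at least as large as those produced by the optimization, which is clear from $\sfc \le 6\sfc$.
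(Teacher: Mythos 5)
Your proof is correct. Given the second bound in \eqref{eq:rw-weight+} --- which Lemma~\ref{lem:weighted-rw} asserts for every $t\ge 0$ and $n\in\N$ with a single constant $\sfc$ --- the pointwise inequality $q_n(x)\le \frac{\sfc}{n}\,\rme^{2\sfc t^2 n-t|x|}$ followed by the choice $t^\ast=|x|/(4\sfc n)$ indeed gives $q_n(x)\le \frac{\sfc}{n}\,\rme^{-|x|^2/(8\sfc n)}$, which is even stronger than \eqref{eq:heat-kernel-bound}; and there is no circularity, since \eqref{eq:rw-weight+} is proved independently of (and prior to) the heat-kernel bound. The paper takes a slightly longer route: it re-runs the Chapman--Kolmogorov splitting \eqref{eq:usedeco} with $\ell=\lfloor n/2\rfloor$, bounds the kernel on the short side by $3\sfc/n$ using \eqref{eq:rw-weight+} at $t=0$, inserts a directional tilt $\rme^{-\rho|x|+2\rho\langle y,x/|x|\rangle}$ controlled via the exponential moment bound \eqref{eq:rw-weight}, and only then optimizes, reaching $q_n(x)\le\frac{6\sfc}{n}\,\rme^{-\rho|x|+2\sfc\rho^2 n}$ with the same optimizer $\rho=|x|/(4\sfc n)$. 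The two arguments share the same Chernoff-optimization core (and ultimately the same decomposition, which in your version is hidden inside the proof of \eqref{eq:rw-weight+}), but yours is shorter and yields the better prefactor $\sfc$ in place of $6\sfc$, precisely because the losses from the splitting were already absorbed into the renamed constant of Lemma~\ref{lem:weighted-rw}. One tiny remark: your parenthetical that $t^\ast\ge 0$ ``since $\sfc\ge 1$'' is unnecessary --- nonnegativity is immediate from $|x|\ge 0$, $n\ge 1$, $\sfc>0$ --- but harmless.
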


\begin{proof}
We assume that $n \ge 2$, since the case $n=1$ is easier.
Let us apply the formula \eqref{eq:usedeco}
with $\ell = \lfloor \frac{n}{2}\rfloor$, so that
$\frac{n}{3} \le \ell \le \frac{n}{2}$:
by \eqref{eq:rw-weight+} (with $t=0$) we have
$q_k(x-y) \le \frac{\sfc}{k} \le \frac{3\sfc}{n}$
for both $k=\ell$ and $k = n-\ell$, therefore for any $\rho \ge 0$
\begin{equation} \label{eq:usedeco2}
	q_{n}(x) \le \frac{3\sfc}{n} \, \rme^{-\rho|x|}
	\sum_{y\in\Z^2: \,  \langle y,x\rangle \ge \frac{1}{2} |x|^2} 
	\rme^{2 \rho \,\langle y,\frac{x}{|x|}\rangle} \big\{ q_\ell(y) + q_{n-\ell}(y) \big\} \,,
\end{equation}
where we bounded $1 \le
\rme^{-\rho|x|} \rme^{2\rho \, \langle y,\frac{x}{|x|}\rangle}$
because $\langle y,x\rangle \ge \frac{1}{2}|x|^2$
(with $\frac{x}{|x|} := 0$ for $x=0$). 
For any $w = (w^1, w^2) \in \R^2$,
by \eqref{eq:rw-weight} and Cauchy-Schwarz we can bound
\begin{equation*}
	\sum_{y\in\Z^2} \rme^{\langle y, w\rangle} \, q_\ell(y)
	\le \sqrt{\sum_{y\in\Z^2} \rme^{2 y^1 w^1} \, q_\ell(y)
	\cdot \sum_{y\in\Z^2} \rme^{2 y^2 w^2} \, q_\ell(y)}
	\le \rme^{\sfc \, |w|^2 \ell} \,,
\end{equation*}
and similarly for $q_{n-\ell}(\cdot)$, therefore for $\max\{\ell,n-\ell\} \le \frac{n}{2}$
we obtain by \eqref{eq:usedeco2}
\begin{equation*}
	q_{n}(x) \le \frac{6\sfc}{n} \, \rme^{-\rho|x| + 2\sfc \, \rho^2 \, n} \,.
\end{equation*}
Optimising over $\rho$ leads us to choose $\rho = \frac{|x|}{4\sfc n}$, 
which yields \eqref{eq:heat-kernel-bound}.
\end{proof}

\section{Estimates on boundary and bulk terms}
\label{app:boundary}

In this section we prove the estimates on the boundary terms
(Propositions~\ref{prop:left} and~\ref{prop:left2} for the left boundary,
Proposition~\ref{prop:right} for the right boundary)
and on the bulk terms
(Proposition~\ref{prop:bulk-rw} and Proposition~\ref{prop:bulk-interacting}).

\subsection{Proof of Propositions~\ref{prop:left}}

By the triangle inequality we can bound
\begin{equation} \label{eq:tsw}
	\bigg\|  \frac{\widehat \sfq_{L}^{|f|,I}}{\cW_t} \bigg\|_{\ell^p}
	\le \sum_{n=1}^L  \bigg\| \frac{\sfq_{n}^{|f|,I}}{\cW_t} \bigg\|_{\ell^p} \,.
\end{equation}
Writing $I = \{I^1, \ldots, I^m\}$ we can write,
recalling \eqref{eq:Qdef2}, \eqref{eq:QLap} and \eqref{eq:weights},
\begin{equation} \label{eq:ina00}
	\bigg\| \frac{\sfq_{n}^{|f|,I}}{\cW_t} \bigg\|_{\ell^p}^p = 
	 \sum_{\bx \in (\Z^2)^h} \frac{\sfq_{n}^{|f|,I}(\bx)^p}{\cW_t(\bx)^p}
	 \le \prod_{j=1}^m
	\Bigg\{ \sum_{y\in\Z^2} \frac{q_n^{|f|}(y)^{p|I^j|}}{w_t(y)^{p|I^j|}} \, 
	\Bigg\}
	 = \prod_{j=1}^m \bigg\| \frac{q_n^{|f|}}{w_t} \bigg\|_{\ell^{p|I^j|}}^{p|I^j|} \,.
\end{equation}
Since $\big\| \cdot \big\|_{\ell^{pk}}^{pk}
\le \big\| \cdot \big\|_{\ell^\infty}^{p(k-1)} \, \big\| \cdot \big\|_{\ell^{p}}^{p}$,
from $\sum_{j=1}^m |I^j| = h$ we get
(raising to $1/p$)
\begin{equation} \label{eq:yb}
	\bigg\| \frac{\sfq_{n}^{|f|,I}}{\cW_t} \bigg\|_{\ell^p}
	\le \bigg\| \frac{q_n^{|f|}}{w_t} \bigg\|_{\ell^\infty}^{h-m} \,
	\bigg\| \frac{q_n^{|f|}}{w_t} \bigg\|_{\ell^{p}}^{m} 
	\le \bigg\| \frac{q_n^{|f|}}{w_t} \bigg\|_{\ell^\infty} \,
	\bigg\| \frac{q_n^{|f|}}{w_t} \bigg\|_{\ell^{p}}^{h-1} \,,
\end{equation}
where the last inequality holds since $m \le h-1$ for $I \ne *$
(note that $\|\cdot\|_{\ell^\infty} \le \|\cdot\|_{\ell^p}$).
By \eqref{eq:weighted-averaged-rw2},
for any $r \in [1,\infty]$,
\begin{equation} \label{eq:reco-ave}
	\bigg\| \frac{q_n^{|f|}}{w_t} \bigg\|_{\ell^\infty} \le
	\frac{\sfc \, \rme^{2\sfc \, t^2 n}}{n^{\frac{1}{r}}} \,
	\bigg\| \frac{f}{w_t} \bigg\|_{\ell^{r}} \,, \qquad
	\bigg\| \frac{q_n^{|f|}}{w_t} \bigg\|_{\ell^{p}} \le
	\sfc \, \rme^{2\sfc \, t^2 n}\, \bigg\| \frac{f}{w_t} \bigg\|_{\ell^{p}} \,,
\end{equation}
hence we obtain for $n \le L$, recalling the definition of $\scrC$ in \eqref{eq:CbarC},
\begin{equation} \label{eq:oh}
	\bigg\| \frac{\sfq_{n}^{|f|,I}}{\cW_t} \bigg\|_{\ell^p}
	\le \frac{\scrC^h}{n^{\frac{1}{r}}} \,
	\bigg\| \frac{f}{w_t} \bigg\|_{\ell^{r}}
	\, \bigg\| \frac{f}{w_t} \bigg\|_{\ell^{p}}^{h-1} \,.
\end{equation}
Plugging this into \eqref{eq:tsw}, since
$\sum_{n=1}^L \frac{1}{n^a} \le \int_0^L \frac{1}{x^a} \, \dd x = \frac{L^{1-a}}{1-a}$,
we obtain 
\begin{equation}\label{eq:est1-prel}
	\max_{I \ne *} \bigg\Vert \frac{\widehat \sfq_{L}^{|f|,I}}{\cW_t} \bigg\Vert_{\ell^p}
	\le 
	\tfrac{r}{r-1} \, \scrC^h \, L^{1-\frac{1}{r}}
	\, \bigg\|\frac{f}{w_{t}}\bigg\|_{\ell^r}
	\, \bigg\|\frac{f}{w_{t}}\bigg\|_{\ell^p}^{h-1} \, ,
\end{equation}
which proves \eqref{eq:est1} for $r \ge p$ (so that $\min\{\frac{r}{r-1}, \frac{p}{p-1}\} = \frac{r}{r-1}$).
More generally, if $r \ge \frac{3p}{1+2p}$, then
$\tfrac{r}{r-1} \le 3\frac{p}{p-1}$ hence \eqref{eq:est1-prel}
still proves \eqref{eq:est1}.

It remains to prove \eqref{eq:est1}
for $r \in [1,\frac{3p}{1+2p}] \subseteq [1,p)$. Let us obtain
an estimate alternative to \eqref{eq:oh}.
Since $\|\cdot\|_{\ell^p}^p \le \|\cdot\|_{\ell^\infty}^{p-r} \, \|\cdot\|_{\ell^r}^r$
for $r < p$, by \eqref{eq:weighted-averaged-rw2} we obtain
\begin{equation} \label{eq:l2lp}
	\bigg\| \frac{q_n^{|f|}}{w_t} \bigg\|_{\ell^{p}} \le
	\bigg\| \frac{q_n^{|f|}}{w_t} \bigg\|_{\ell^\infty}^{1-\frac{r}{p}} \,
	\bigg\| \frac{q_n^{|f|}}{w_t} \bigg\|_{\ell^r}^{\frac{r}{p}} \le
	\frac{\sfc \, \rme^{2\sfc \, t^2 n}}{n^{\frac{1}{r}-\frac{1}{p}}}
	\, \bigg\| \frac{f}{w_t} \bigg\|_{\ell^{r}} \,,
\end{equation}
which we can use to estimate one factor of $\| \frac{q_n^{|f|}}{w_t} \|_{\ell^{p}}$
appearing in \eqref{eq:yb} (recall that $h \ge 2$):
applying again the first bound in \eqref{eq:reco-ave}, for $n \le L$ we obtain from \eqref{eq:yb}
\begin{equation} \label{eq:alpha}
	\bigg\| \frac{\sfq_{n}^{|f|,I}}{\cW_t} \bigg\|_{\ell^p}
	\le \frac{\scrC^h}{n^{\gamma}} \,
	\bigg\| \frac{f}{w_t} \bigg\|_{\ell^{r}}^{2} \,
	\bigg\| \frac{f}{w_t} \bigg\|_{\ell^{p}}^{h-2} 
	\qquad
	\text{with} \quad \gamma := \tfrac{2}{r} -\tfrac{1}{p} = \tfrac{1}{r}+ \tfrac{p-r}{pr} \,.
\end{equation}
The RHS of \eqref{eq:alpha} is smaller than the RHS of \eqref{eq:oh} if and only if
\begin{equation} \label{eq:tilden}
	\frac{1}{n^{\gamma}} \, \bigg\| \frac{f}{w_t} \bigg\|_{\ell^{r}}
	< \frac{1}{n^{\frac{1}{r}}} \, \bigg\| \frac{f}{w_t} \bigg\|_{\ell^{p}}
	\qquad \iff \qquad
	n > \tilde{n} := 
	\Bigg(\frac{\|\frac{f}{w_{t}}\|_{\ell^r}}{\|\frac{f}{w_{t}}\|_{\ell^p}}\Bigg)^{\frac{pr}{p-r}} \,.
\end{equation}
Note that for $r \in [1, \frac{3p}{1+2p}]$
we have $\gamma-1 \ge \tfrac{2(1+2p)}{3p} - \tfrac{1}{p} - 1 = \tfrac{p-1}{3p} > 0$,
hence  $\gamma > 1$.
Then $\sum_{n > \tilde{n}}^\infty \frac{1}{n^{\gamma}} 
\le \int_{\tilde{n}}^\infty \frac{1}{x^{\gamma}} \, \dd x
= \frac{1}{\gamma-1}\, \tilde{n}^{1-\gamma}
\le \frac{3p}{p-1} \, \tilde{n}^{1-\gamma}$,
hence by \eqref{eq:alpha} we can bound
\begin{equation*}
	\sum_{n > \tilde{n}}  \bigg\| \frac{\sfq_{n}^{|f|,I}}{\cW_t} \bigg\|_{\ell^p}
	\le \tfrac{3p}{p-1} \, \scrC^h \, \tilde{n}^{1-\gamma} \, 
	\bigg\| \frac{f}{w_t} \bigg\|_{\ell^{r}}^{2} \,
	\bigg\| \frac{f}{w_t} \bigg\|_{\ell^{p}}^{h-2}
	= \tfrac{3p}{p-1} \, \scrC^h \, \bigg\|\frac{f}{w_{t}}\bigg\|_{\ell^r}^{\frac{r(p-1)}{p-r}}
	\, \bigg\|\frac{f}{w_{t}}\bigg\|_{\ell^p}^{h-\frac{r(p-1)}{p-r}} \,,
\end{equation*}
where the equality follows by the definitions
of $\tilde{n}$ in \eqref{eq:tilden} and $\gamma$ in \eqref{eq:alpha}.
For the contribution of $n \le \tilde{n}$, the previous bound
\eqref{eq:oh} with $r=p$ yields, as in \eqref{eq:est1-prel},
\begin{equation*}
	\sum_{n=1}^{\tilde{n}}  \bigg\| \frac{\sfq_{n}^{|f|,I}}{\cW_t} \bigg\|_{\ell^2}
	\le \tfrac{p}{p-1}
	\, \scrC^h \,\tilde{n}^{1-\frac{1}{p}} \, \bigg\|\frac{f}{w_{t}}\bigg\|_{\ell^p}^{h}  \, 
	= \tfrac{p}{p-1} \,\scrC^h \, \bigg\|\frac{f}{w_{t}}\bigg\|_{\ell^r}^{\frac{r(p-1)}{p-r}}
	\, \bigg\|\frac{f}{w_{t}}\bigg\|_{\ell^p}^{h-\frac{r(p-1)}{p-r}} \,,
\end{equation*}
having used the definition
of $\tilde{n}$ in \eqref{eq:tilden}. Overall, see  \eqref{eq:tsw}, for $r \in [1,\frac{3p}{1+2p}]$
we have
\begin{equation}\label{eq:est12bis}
	\max_{I \ne *} \bigg\Vert \frac{\widehat \sfq_{L}^{|f|,I}}{\cW_t} \bigg\Vert_{\ell^p}
	\le \underbrace{\tfrac{4p}{p-1} \,\scrC^h \, \bigg\|\frac{f}{w_{t}}\bigg\|_{\ell^r}^{\frac{1}{\alpha}}
	\, \bigg\|\frac{f}{w_{t}}\bigg\|_{\ell^p}^{h-\frac{1}{\alpha}}}_{A}
	\qquad \text{with } \alpha:= \tfrac{p-r}{r(p-1)}  \in (0,1] \,.
\end{equation}
At the same time, we can apply again the previous bound
\eqref{eq:est1-prel} with $r=p$ to estimate
\begin{equation}\label{eq:est12}
	\max_{I \ne *} \bigg\Vert \frac{\widehat \sfq_{L}^{|f|,I}}{\cW_t} \bigg\Vert_{\ell^p}
	\le \underbrace{ \tfrac{p}{p-1} \,\scrC^h \, L^{1-\frac{1}{p}}
	\, \bigg\|\frac{f}{w_{t}}\bigg\|_{\ell^p}^{h} }_{B} \,.
\end{equation}
Combining these bounds we get
$\max_{I \ne *} \big\Vert \frac{\widehat \sfq_{L}^{|f|,I}}{\cW_t} \big\Vert_{\ell^2} \le A^{\alpha}
B^{1-\alpha}$, hence 
\begin{equation*}
	\forall r \in [1,\tfrac{3p}{1+2p}]: \qquad
	\max_{I \ne *} \bigg\Vert \frac{\widehat \sfq_{L}^{|f|,I}}{\cW_t} \bigg\Vert_{\ell^p}
	\le \tfrac{4p}{p-1} \,\scrC^h \, L^{1-\frac{1}{r}}
	\, \bigg\|\frac{f}{w_{t}}\bigg\|_{\ell^r}
	\, \bigg\|\frac{f}{w_{t}}\bigg\|_{\ell^p}^{h-1}  \,,
\end{equation*}
which coincides with our goal \eqref{eq:est1}, since
$\min\{\frac{r}{r-1}, \frac{p}{p-1}\} = \frac{p}{p-1}$ for $r < p$.\qed

\smallskip
\subsection{Proof of Proposition~\ref{prop:left2}}

We follow the proof of Proposition~\ref{prop:left}.
By the triangle inequality, as in \eqref{eq:tsw},
it is enough to show that
\begin{equation}\label{eq:oh2}
	\bigg\| \frac{\sfq_{n}^{|f|,I}}{\cW_t} \cV_s^J \bigg\|_{\ell^p} 
	\le \frac{36^{\frac{1}{p}} \, \scrC^h}{s^{2/p}} \,
	\bigg\| \frac{f}{w_t} \bigg\|_{\ell^{\infty}}^2
	\, \bigg\| \frac{f}{w_t} \bigg\|_{\ell^{p}}^{h-2} \,.
\end{equation}

We assume for ease of notation that 
$J=\{\{1,2\}, \{3\},\ldots,\{h\}\}$.
Let us fix a partition $I = \{I^1, \ldots, I^{m}\}$ such that $I \not\supseteq J$, 
say $1 \in I^1$ and $2 \in I^2$.
In analogy with \eqref{eq:ina00}, we have
\begin{equation} \label{eq:overa}
	\bigg\| \frac{\sfq_{n}^{|f|,I}}{\cW_t} \cV_s^J \bigg\|_{\ell^p}^p \le
	\widehat\Sigma^{(1,2)}_n \cdot
	\prod_{j=3}^m \bigg\| \frac{q_n^{|f|}}{w_t} \bigg\|_{\ell^{p|I^j|}}^{p|I^j|} \,.
\end{equation}
where 
\begin{equation} \label{eq:hatSigma}
	\widehat\Sigma^{(1,2)}_n 
	:=  \sum_{y^1, y^2\in\Z^2} \big( q_n^{|f|}(y^1) \, \rme^{t|y^1|} \big)^{p|I^1|}
	\, \big( q_n^{|f|}(y^2) \, \rme^{t|y^2|} \big)^{p|I^2|}
	\, \rme^{-ps|y^1-y^2|} \,.
\end{equation}
By a uniform bound, 
we can estimate
\begin{equation} \label{eq:S12}
\begin{split}
	\widehat\Sigma^{(1,2)}_n 
	& \le \, \bigg\| \frac{q_n^{|f|}}{w_t} \bigg\|_{\ell^\infty}^{p|I^1|} 
	\sum_{y^1, y^2\in\Z^2}  
	\bigg( \frac{q_n^{|f|}(y^2)}{w_{t}(y^2)} \bigg)^{p|I^2|} \, 
	\rme^{-ps|y^1-y^2|} \\
	& = \, \bigg\| \frac{q_n^{|f|}}{w_t} \bigg\|_{\ell^\infty}^{p|I^1|} 
	\, \bigg\| \frac{q_n^{|f|}}{w_t} \bigg\|_{\ell^p}^{p|I^2|}
	\,\bigg( \sum_{y\in\Z^2} \rme^{-ps|y|} \bigg) \,.
\end{split}
\end{equation}
Since $2|z| \ge |z^1| + |z^2|$ for $z = (z^1, z^2) \in \Z^2$
and $1-\rme^{-x} \ge \frac{2}{3}x$ for $0\le x\le \frac{1}{2}$, we can bound
\begin{equation}\label{eq:easybound}
	\sum_{z\in\Z^2} \rme^{-ps|z|}
	\le \sum_{z\in\Z^2} \rme^{-s|z|}
	\le \bigg( \sum_{x\in\Z} \rme^{-s\frac{|x|}{2}} \bigg)^2
	\le \bigg( \frac{2}{1-\rme^{-\frac{s}{2}}} \bigg)^2 \le \frac{36}{s^2} \,.
\end{equation}
Plugging these estimates into \eqref{eq:overa} and bounding
$\big\| \cdot \big\|_{\ell^{pk}}^{pk}
\le \big\| \cdot \big\|_{\ell^\infty}^{p(k - 1)} \, \big\| \cdot \big\|_{\ell^{p}}^{p}$,
since $\sum_{j=1}^m |I^j| = h$
and $m \le h-1$, we obtain (raising to $1/p$)
\begin{equation*}
	\bigg\| \frac{\sfq_{n}^{|f|,I}}{\cW_t} \cV_s^J \bigg\|_{\ell^p} \le
	\frac{36^{\frac{1}{p}}}{s^{2/p}} \, \bigg\| \frac{q_n^{|f|}}{w_t} \bigg\|_{\ell^\infty}^{h-m+1} 
	\, \bigg\| \frac{q_n^{|f|}}{w_t} \bigg\|_{\ell^{p}}^{m - 1} 
	\le \frac{36^{\frac{1}{p}}}{s^{2/p}} \, \bigg\| \frac{q_n^{|f|}}{w_t} \bigg\|_{\ell^\infty}^{2} 
	\, \bigg\| \frac{q_n^{|f|}}{w_t} \bigg\|_{\ell^{p}}^{h - 2} \,.
\end{equation*}
Applying the estimates in \eqref{eq:reco-ave}, we obtain \eqref{eq:oh2}.
\qed

\smallskip

\subsection{Proof of Proposition~\ref{prop:right}}
The second line of \eqref{eq:est4-new} follows by the first line because
$\|\cdot\|_{\ell^{2q}}^2 \le \|\cdot\|_{\ell^\infty} \, \|\cdot\|_{\ell^q}$.
Let us prove the first line of \eqref{eq:est4-new}.
Writing $J = \{J^1,\ldots, J^m\}$ 
and recalling \eqref{eq:Qright} and \eqref{eq:Qdef2} we can write, as in \eqref{eq:ina00},
\begin{equation*}
	\big\Vert \overline{\sfq}_{L}^{|g|,J} \, \cW_t \big\Vert_{\ell^q}^q
	= \sum_{\bx \in (\Z^2)^h} 
	\overline{\sfq}_{L}^{|g|,J}(\bx)^q \, \cW_t(\bx)^q
	\le 
	\sum_{\by \in (\Z^2)^m}
	\max_{1 \le n \le L} \,
	\prod_{j=1}^m \big( q_n^{|g|}(y_j) \, w_t(y_j)\big)^{ q |J^j|}  \,.
\end{equation*}
We next observe that for $k = |J^j| \ge 1$, arguing as in \eqref{eq:qbound}
with $1/w_t$ replaced by $w_t$, we have
\begin{equation} \label{eq:aarg}
\begin{split}
	\big( q_n^{|f|}(y)\, w_t(y)\big)^{k}
	& \le \big( \sfc \, \rme^{2 \sfc t^2 n} \big)^{k-1} \, 
	\sum_{z\in\Z^2} |f(z)|^k \, w_t(z)^k \, q_n(y-z) \, \frac{w_t(y)}{w_t(z)} \\
	& = \big( \sfc \, \rme^{2 \sfc t^2 n} \big)^{k-1} \, 
	q^{|f|^k w_t^{k-1}}_n(y) \, w_t(y) \,.
\end{split}
\end{equation}
Introducing the function
\begin{equation}\label{eq:Gm}
	G(y_1, \ldots, y_m) :=
	\prod_{j=1}^m | g(y_j) |^{|J^j|} \, w_t(y_j)^{|J^j|-1}  \,,
\end{equation}
and recalling the notation \eqref{eq:kernel1m}, we can thus write
\begin{equation}\label{eq:Gmappl}
\begin{split}
	\big\Vert \overline{\sfq}_{L}^{|g|,J} \, \cW_t \big\Vert_{\ell^q}^q
	& \le \prod_{j=1}^m \big( \sfc \, \rme^{2 \sfc t^2 n} \big)^{q(|J^j|-1)}
	\sum_{\by \in (\Z^2)^m}
	\max_{1 \le n \le L} \, \big( q_n^{\otimes m, G}(\by) \, w_t^{\otimes m}(\by) \big)^q \\
	& \le  \overline{\scrC}^{\,q( h-m)}\,
	\Big\| \max_{1 \le n \le L} \, q_n^{\otimes m, G} \, w_t^{\otimes m}
	\Big\|_{\ell^q}^q \,,
\end{split}
\end{equation}
because $\sfc \, \rme^{2 \sfc t^2 n} \le \overline{\scrC}$,
see \eqref{eq:CbarC}, and $\sum_{j=1}^m |J^j| = h$.
We can now apply \eqref{eq:max-averaged-rw} to get
\begin{equation}\label{eq:Gmconcl}
	\big\Vert \overline{\sfq}_{L}^{|g|,J} \, \cW_t \big\Vert_{\ell^q}
	\le \tfrac{q}{q-1} \, \overline{\scrC}^{\,h} \,
	\big\| G \, w_{t}^{\otimes m} \big\|_{\ell^q} \,.
\end{equation}
It remains to compute
\begin{equation} \label{eq:Gmcomp}
	\big\| G \, w_{t}^{\otimes m} \big\|_{\ell^q}
	= \prod_{j=1}^m  \bigg( \sum_{y_j \in \Z^2} \,
	 | g(y_j) |^{q|J^j|} \, w_t(y_j)^{q|J^j|} \bigg)^{1/q}
	 = \prod_{j=1}^m \big\| g \, w_t \big\|_{\ell^{q|J^j|}}^{|J^j|} \,.
\end{equation}
Since $J \ne *$, we have $|J^j| \ge 2$ for at least one $j$, say for $j=1$, hence
for $k = |J^1|$ we bound
$\big\| \cdot \big\|_{\ell^{qk}}^{k}
\le \big\| \cdot \big\|_{\ell^\infty}^{k-2} \, \big\| \cdot \big\|_{\ell^{2q}}^{2}$,
while for all other $k = |J^j| \ge 1$
we simply bound $\big\| \cdot \big\|_{\ell^{qk}}^{k}
\le \big\| \cdot \big\|_{\ell^\infty}^{k-1} \, \big\| \cdot \big\|_{\ell^{q}}$.
Since $\sum_{j=1}^m |J^j| = h$, and $m \le h-1$ for $J \ne *$, we obtain
\begin{equation*}
	\big\| G \, w_{t}^{\otimes m} \big\|_{\ell^q}
	\le \big\| g \, w_t \big\|_{\ell^\infty}^{h - m - 1}
	\, \big\| g \, w_t \big\|_{\ell^{2q}}^{2}
	\, \big\| g \, w_t \big\|_{\ell^{q}}^{m-1}
	\le \big\| g \, w_t \big\|_{\ell^{2q}}^{2}
	\, \big\| g \, w_t \big\|_{\ell^{q}}^{h-2} \,,
\end{equation*}
because $\|\cdot\|_{\ell^\infty} \le \|\cdot\|_{\ell^q}, \|\cdot\|_{\ell^{2q}}$. This completes
the proof of the first line of \eqref{eq:est4-new}.

We next prove \eqref{eq:est4+-new}.
We may assume that 
$I=\{\{1,2\}, \{3\},\ldots,\{h\}\}$.
Let us fix a partition $J = \{J^1, \ldots, J^{m}\}$ with $J \not\supseteq I$, 
say $1 \in J^1$ and $2 \in J^2$. Then we can write
\begin{equation*}
\begin{split}
	\big\Vert \overline{\sfq}_{L}^{|g|,J} \, \cW_t   \, \cV_s^I \big\Vert_{\ell^q}^q
	\le 
	\sum_{\by \in (\Z^2)^m}
	w_s(y^1-y^2)^q \,
	\max_{1 \le n \le L} \, \bigg\{ &
	\big( q_n^{|g|}(y_1) \, w_t(y_1)\big)^{q |J^1|} \, \\
	& \times \prod_{j=2}^m \big( q_n^{|g|}(y_j) \, w_t(y_j)\big)^{q |J^j|} \bigg\} \,.
\end{split}
\end{equation*}
By \eqref{eq:max-averaged-rw}  for $m=1$, we can bound $q_n^{|g|}(y_1) \, w_t(y_1) \le
\|q_n^{|g|} \, w_t \|_{\ell^\infty} \le \overline{\scrC} \, \|g \, w_t\|_{\ell^\infty}$. 
Then the sum over $y_1 \in \Z^2$
yields $\|w_s\|_{\ell^q}^q$.
If we define $G'$ as $G$ from \eqref{eq:Gm} with the product
ranging from $2$ to~$m$, then
arguing as in \eqref{eq:aarg}-\eqref{eq:Gmappl} we get
\begin{equation*}
	\big\Vert \overline{\sfq}_{L}^{|g|,J} \, \cW_t   \, \cV_s^I \big\Vert_{\ell^q}^q
	\le \overline{\scrC}^{\, q (h-(m-1))} \, \|g \, w_t\|_{\ell^\infty}^{\, q |J^1|} \, \|w_s\|_{\ell^q}^q \,
	\Big\| \max_{1 \le n \le L} \, q_n^{\otimes (m-1), G'} \, w_t^{\otimes (m-1)}
	\Big\|_{\ell^q}^q \,.
\end{equation*}
Applying \eqref{eq:max-averaged-rw}, as in \eqref{eq:Gmconcl}-\eqref{eq:Gmcomp}, we then obtain
\begin{equation*}
\begin{split}
	\big\Vert \overline{\sfq}_{L}^{|g|,J} \, \cW_t   \, \cV_s^I \big\Vert_{\ell^q}
	&\le \overline{\scrC}^{\, h-(m-1)} \,\|g \, w_t\|_{\ell^\infty}^{|J^1|} \, \|w_s\|_{\ell^q} \, 
	\tfrac{q}{q-1} \,\overline{\scrC}^{\,m-1} \, 
	\big\| G' \, w_{t}^{\otimes (m-1)} \big\|_{\ell^q} \\
	&\le \tfrac{q}{q-1} \,
	\overline{\scrC}^{\,h} \, \|w_s\|_{\ell^q} \,\|g \, w_t\|_{\ell^\infty}^{|J^1|} \, 
	\prod_{j=2}^m \big\| g \, w_t \big\|_{\ell^{q|J^j|}}^{|J^j|} \\
	&\le \tfrac{q}{q-1} \,
	\overline{\scrC}^{\,h} \, \|w_s\|_{\ell^q} \,\|g \, w_t\|_{\ell^\infty}^{h - (m-1)} \, 
	\|g \, w_t\|_{\ell^q}^{m-1} \, ,
\end{split}
\end{equation*}
where the last inequality holds by $\big\| \cdot \big\|_{\ell^{qk}}^{k}
\le \big\| \cdot \big\|_{\ell^\infty}^{k-1} \, \big\| \cdot \big\|_{\ell^{q}}$
for $k = |I^j| \ge 1$. Since $m \le h-1$, the proof of \eqref{eq:est4+-new} is complete.
\qed

\smallskip
\subsection{Proof of Proposition~\ref{prop:bulk-rw}}
Let us set for short $p := \frac{q}{q-1}$ (so that $\frac{1}{p}+\frac{1}{q} = 1$).
We are going to use a key functional inequality from
\cite[Lemma~6.8]{CSZ23}, in the improved version
from \cite[eq.~(3.21) in the proof of Proposition~3.3]{LZ21+}:
\begin{equation}\label{eq:HLS}
	\sum_{\bz \in (\Z^2)^h_I, \, \bx \in (\Z^2)^h_J}
	\frac{f(\bz) \, g(\bx)}{(1+|\bx-\bz|^2)^{h-1}} \le C_1
	\, p \, q \, \|f\|_{\ell^p} \, \|g\|_{\ell^q}
	\qquad \text{where} \quad
	C_1 := 2^{2h} (1+\pi)^{h} \,.
\end{equation}
(The value of $C_1$ is extracted
from \cite[proof of Proposition~3.3]{LZ21+} where $C_1 \le 2^{3h+1}
(\frac{c}{2})^{h-1} p q$ with $c \le 1+\pi$ from \cite[proof of Lemma~A.1]{LZ21+},
hence $C_1 \le 2^{2h+2} \, (1+\pi)^{h-1}$.)

We show below the following bound on 
$\widehat \sfQ_{L}^{*,*}(\bz, \bx) = \sum_{n=1}^L \prod_{i=1}^h q_n(x^i - z^i)$:
\begin{equation}\label{eq:Green-function-bound}
	\widehat\sfQ_{L}^{*,*}(\bz, \bx)
	\le \frac{C_2 \, \rme^{-\frac{|\bx-\bz|^2}{16\sfc \, L}}}{(1+|\bx-\bz|^2)^{h-1}}
	\qquad \text{where} \quad C_2 := h! \, (200 \, \sfc^2)^{h}  \,.
\end{equation}
Recalling \eqref{eq:ratiocW}, 
since $\widehat \sfQ_{L}^{I,J}(\bz, \bx) = \widehat \sfQ_{L}^{*,*}(\bz, \bx) \, 
\ind_{\{\bz\sim I, \bx \sim J\}}$, see \eqref{eq:Qdef2}-\eqref{eq:QLap},
we obtain
\begin{equation*}
	\big(\cW_t \, \widehat \sfQ_{L}^{I,J} 
	\tfrac{1}{\cW_t}\big)(\bz,\bx) \le 
	\frac{C_2 \, \ind_{\{\bz\sim I, \bx \sim J\}}}{
	(1+|\bx-\bz|^2)^{h-1}} \prod_{i=1}^h \rme^{t |z^i-x^i| - \frac{|z^i-x^i|^2}{16\sfc \, L}}
	\le 
	\frac{C_2 \, \rme^{8 \sfc h t^2  L}
	\, \ind_{\{\bz\sim I, \bx \sim J\}}}{(1+|\bx-\bz|^2)^{h-1}} \,,
\end{equation*}
because $\max_{a\in\R}\{t a - \frac{a^2}{16\sfc \, L} \} = 8 \sfc \, t^2 L$.
Applying \eqref{eq:HLS}, get \eqref{eq:est2} since $800 (1+\pi) \le 4000$.

\smallskip

We next prove \eqref{eq:est2V}. Let $I, J$ be pairs, say $I = \{\{a,b\}, \{c\} \colon c \ne a, c \ne b\}$
and $J = \{\{\tilde a,\tilde b\}, \{c\} \colon c \ne \tilde a, c \ne \tilde b\}$.
For $\bz \sim I$ and $\bx \sim J$ we have $z^a = z^b$, hence
\begin{equation*}
	\frac{1}{\cV_s^I(\bx)} \le \rme^{s |x^{a}-x^{b}|}
	\le \rme^{s \{|x^{a}-z^{a}| + |z^{a}-z^{b}| + |z^{b}-x^{b}|\}}
	= \rme^{s |x^{a}-z^{a}|} \,
	\rme^{s |z^{b}-x^{b}|} \,,
\end{equation*}
and similarly $\frac{1}{\cV_s^J(\bz)} \le \rme^{s |x^{\tilde a}-z^{\tilde a}|} 
\, \rme^{s |z^{\tilde b}-x^{\tilde b}|}$. Arguing as above, we obtain \eqref{eq:est2V}:
\begin{equation*}
\begin{split}
	\big( \tfrac{\cW_t}{\cV_s^J} \, \widehat \sfQ_{L}^{I,J} 
	\tfrac{1}{\cW_t \, \cV_s^I} \big)(\bz,\bx)
	&\le  \frac{C_2 \, \ind_{\{\bz\sim I, \bx \sim J\}}}{
	(1+|\bx-\bz|^2)^{h-1}} \prod_{i=1}^h \rme^{(t + 2s) |z^i-x^i| - \frac{1}{16\sfc \, L}|z^i-x^i|^2} \\
	&\le 
	\frac{C_2 \, \rme^{8 \sfc h (t+2s)^2 L}
	\, \ind_{\{\bz\sim I, \bx \sim J\}}}{(1+|\bx-\bz|^2)^{h-1}} \,.
\end{split}
\end{equation*}

Let us prove \eqref{eq:Green-function-bound}.
By the bound
$q_n(x) \le \frac{6\sfc}{n} \, \rme^{-\frac{|x|^2}{8\sfc\, n}}$
proved in Lemma~\ref{lem:heat-kernel-bound} we obtain
\begin{equation*}
	\sfQ_{n}^{*,*}(\bz, \bx) = \prod_{i=1}^h q_n(x^i - z^i)
	\le \frac{(6\sfc)^h}{n^h} \, \rme^{-\frac{|\bx - \bz|^2}{8\sfc\, n}} \,,
\end{equation*}
hence for $\bx = \bz$ we get $\widehat\sfQ_{L}^{*,*}(\bx, \bx)
= \sum_{n=1}^L \sfQ_{n}^{*,*}(\bz, \bx)
\le (6\sfc)^h \sum_{n=1}^\infty \frac{1}{n^2}
= (6\sfc)^h \, \frac{\pi^2}{6} \le 2 \, (6\sfc)^h$
which is compatible with \eqref{eq:Green-function-bound}.
We next assume that $\bx \ne \bz$: note that for $A = \frac{|\bx-\bz|^2}{8\sfc} > 0$
\begin{equation*}
	\sum_{n=1}^L \frac{\rme^{-\frac{A}{n}}}{n^h}
	\le  \frac{\rme^{-\frac{A}{2L}}}{A^{h-1}} \,
	\bigg\{ \frac{1}{A} \sum_{n=1}^\infty \phi\big(\tfrac{n}{A}\big) \bigg\}
	\qquad \text{where} \qquad \phi(t) := \frac{\rme^{-\frac{1}{2t}}}{t^h} \,.
\end{equation*}
Since $\phi(\cdot)$ is unimodal, we can bound
$\frac{1}{A} \sum_{n=1}^\infty \phi\big(\tfrac{n}{A}\big)
\le \int_0^\infty \phi(t) \, \dd t + \frac{1}{A} \| \phi \|_\infty$ and note that
$\int_0^\infty \phi(t) = 2^{h-1} \int_0^\infty s^{h-2} \, \rme^{-s} \, \dd s 
= 2^{h-1} \, (h-2)!$ while $\|\phi\|_\infty = (2h)^h \, \rme^{-h}
\le 2^h h! / \sqrt{2\pi h} \le \frac{1}{2} 2^h h!$, therefore for $A \ge 1$ we get
$\frac{1}{A} \sum_{n=1}^\infty \phi\big(\tfrac{n}{A}\big) \le 2^h h!$. Overall,
recalling \eqref{eq:QLap}, we have for $\bx \ne \bz$
\begin{equation*}
	\widehat\sfQ_{L}^{*,*}(\bz, \bx)
	\le \sum_{n=1}^L \sfQ_{n}^{*,*}(\bz, \bx)
	\le \frac{(48 \, \sfc^2)^{h} \, \rme^{-\frac{|\bx-\bz|^2}{16\sfc \, L}}}{|\bx-\bz|^{2(h-1)}}
	\, 2^h \, h!
	\le \frac{h! \, (200 \, \sfc^2)^{h} \, \rme^{-\frac{|\bx-\bz|^2}{16\sfc \, L}}}{(1+|\bx-\bz|^2)^{h-1}} \,,
\end{equation*}
where we last bounded $|\bx-\bz|^2 \ge \frac{1}{2} (1+|\bx-\bz|^2)$ for $\bx \ne \bz$.
We have proved \eqref{eq:Green-function-bound}.
\qed

\smallskip

\subsection{Proof of Proposition~\ref{prop:bulk-interacting}}
Let us define $p := \frac{q}{q-1}$ so that $\frac{1}{p}+\frac{1}{q}=1$.
Since
\begin{equation*}
	\Vert \sfA \Vert_{\ell^q \to \ell^q}
	:= \, \sup_{\substack{\sff, \sfg \colon \|\sff\|_{\ell^p} \le 1 , \,
	\|\sfg\|_{\ell^q} \le 1}} \ \sum_{\bz, \bx \in (\Z^2)^h_I}
	\sff(\bz) \, \sfA(\bz,\bx) \, \sfg(\bx) \,,
\end{equation*}
we can bound $\sum_{\bz,\bx} \sff(\bz) \, |\widehat\sfU|^I(\bz,\bx) \, \sfg(\bx) \le 
\big(\sum_{\bz,\bx} \sff(\bz)^p \, |\widehat\sfU|^I(\bz,\bx)\big)^{1/p}
\big(\sum_{\bz,\bx} |\widehat\sfU|^I(\bz,\bx) \, \sfg(\bx)^q\big)^{1/q}$
by Cauchy-Schwarz, hence we obtain
\begin{equation} \label{eq:inview}
	\Vert \sfA \Vert_{\ell^q \to \ell^q}
	\le \max\Bigg\{ \sup_{\bz\in(\Z^2)^h_I} \,
	\sum_{\bx \in (\Z^2)^h_I} \sfA(\bz, \bx) \,, \,
	\sup_{\bx\in(\Z^2)^h_I} \,
	\sum_{\bz \in (\Z^2)^h_I} \sfA(\bz, \bx) \Bigg\} \,.
\end{equation}
We will prove \eqref{eq:est3} and \eqref{eq:est3+} exploiting this bound.

We recall that $U_{n,\beta}(x)$ is defined in \eqref{eq:Ux} and we define
\begin{equation}\label{eq:U}
	U_{n,\beta} := \sum_{x\in\Z^2} U_{n,\beta}(x)
	= \sum_{k=1}^\infty (\sigma_\beta^2)^{k} \!
	\sum_{0 =: n_0 <n_1<\dots <n_k:= n} \
	\prod_{i=1}^k q_{2(n_i - n_{i-1})}(0) \,.
\end{equation}
When we sum $U_{n,\beta}$ for $n=1,\ldots, L$, if we enlarge
the sum range in \eqref{eq:U} by letting each increment $m_i := n_i - n_{i-1}$
vary freely in $\{1,\ldots, M\}$, recalling \eqref{eq:RNlambda} we obtain
\begin{equation}\label{eq:sumUn}
\begin{split}
	\sum_{n=1}^L \rme^{-\lambda n} \, U_{n,\beta}
	& \le \sum_{k=1}^\infty (\sigma_\beta^2)^{k} 
	\bigg( \sum_{m=1}^L \rme^{-\lambda m} \, q_{2m}(0) \bigg)^k
	= \sum_{k=1}^\infty (\sigma_\beta^2 \, R_L^{(\lambda)})^{k}
	= \frac{\sigma_\beta^2 \, R_L^{(\lambda)}}{1-\sigma_\beta^2 \, R_L^{(\lambda)}} \,.
\end{split}
\end{equation}

We next estimate the exponential spatial moments of $U_{n,\beta}(x)$.
Pluggin the second bound from \eqref{eq:rw-weight}
into \eqref{eq:Ux}, writing $x = (x^1, x^2)$
and $x^a = \sum_{i=1}^k (x^a_i - x^a_{i-1})$, we obtain
\begin{equation*}
	\forall a=1,2: \qquad
	\sum_{x\in\Z^2} \rme^{t x^a} \, U_{n,\beta}(x) \le \rme^{\sfc \, \frac{t^2}{2} n} \, U_{n,\beta} \,.
\end{equation*}
From this, by $|x| \le |x^1| + |x^2|$, Cauchy-Schwarz
and $\rme^{t|x^a|} \le \rme^{tx^a} + \rme^{-tx^a}$,
we deduce that
\begin{equation} \label{eq:momU}
	\sum_{x\in\Z^2} \rme^{t |x|} \, U_{n,\beta}(x) \le 2 \, \rme^{2\sfc \, t^2 n} \, U_{n,\beta} \,.
\end{equation}

\smallskip

We now fix a partition $I  = \{I^1, \ldots, I^m\} \ne *$ and
a \emph{pair} $J = \{\{a,b\}, \{c\}: c \ne a,b\}$.
Our goal is to prove \eqref{eq:est3+}, which also yields
\eqref{eq:est3} for $s=0$.
By \eqref{eq:ratiocW} and \eqref{eq:ratioV} we have the following rough bound,
for any $\tau \in \{-1,+1\}$:
\begin{equation} \label{eq:ratios}
	\frac{\cW_t(\bz) \, \cV_s^J(\bz)^{\tau}}{\cW_t(\bx) \, \cV_s^J(\bx)^{\tau}}
	\le \rme^{2(t + s)|x^a-z^a|}
	\prod_{c \ne a,b} \, \rme^{(t + s)|x^c-z^c|} \,.
\end{equation}
We may order $|I^1| \ge |I^2| \ge \ldots \ge |I^m|$,
so that $|I^1| \ge 2$.
Given $\bz, \bx \in (\Z^2)^h_I$, denoting by $x^{I^j}$
the common value of $x^a$ for $a \in I^j$, by \eqref{eq:Qdef2} we can write
\begin{equation*}
	\sfQ^{I,I}_n(\bz,\bx) 
	\,=\, 
	q_n(x^{I^1} - z^{I^1})^{|I^1|}
	\prod_{j=2}^m
	q_n(x^{I^j} - z^{I^j})^{|I^j|} 
	\,\le\,
	q_n(x^{I^1} - z^{I^1})^2 \,
	\prod_{j=2}^m
	q_n(x^{I^j} - z^{I^j}) \,,
\end{equation*}
because $q_n(\cdot) \le 1$.
Since $|\bbE[\xi_\beta^I]| \le \sigma_\beta^2$ by assumption, from \eqref{eq:sfU} we can bound
\begin{equation*}
	|\sfU|^I_{n,\beta}(\bz,\bx)
	\le U_{n,\beta}(x^{I^1} - z^{I^1}) \, 
	\prod_{j=2}^m q_n(x^{I^j} - z^{I^j}) \,,
\end{equation*}
therefore
by \eqref{eq:momU}, \eqref{eq:ratios} and the first bound in \eqref{eq:rw-weight+}
we obtain
\begin{equation} \label{eq:sommax2}
\begin{split}
	\sum_{\bx \in (\Z^2)^h_I} \bigg( |\sfU|^I_{n,\beta}(\bz,\bx) \,
	\frac{\cW_t(\bz) \, \cV_s(\bz)^{\tau}}{\cW_t(\bx) \, \cV_s(\bx)^{\tau}}
	\bigg)
	\le & \, 
	2^h \, \rme^{4h \sfc \, (t + s)^2 n} \, U_{n,\beta} \,,
\end{split}
\end{equation}
which yields, recalling \eqref{eq:ULap},
\begin{equation} \label{eq:pluin}
	\sup_{\bz\in(\Z^2)^h_I} \,
	\sum_{\bx \in (\Z^2)^h_I} |\widehat \sfU|^{J}_{L, \lambda, \beta}(\bz, \bx) 
	\frac{\cW_t(\bz) \, \cV_s(\bz)^{\tau}}{\cW_t(\bx) \, \cV_s(\bx)^{\tau}}
	\le 1 + 2^h \, 
	\rme^{4h \sfc \, (t + s)^2 L} \sum_{n=1}^L \rme^{-\lambda n} \, U_{n,\beta} \,,
\end{equation}
and the same holds exchanging $\bx$ and $\bz$ by symmetry
(note that the bound \eqref{eq:ratios} is symmetric in $\bx \leftrightarrow \bz$).
Recalling \eqref{eq:inview} and  \eqref{eq:sumUn}, we obtain \eqref{eq:est3+}
(hence  \eqref{eq:est3}).
\qed

\smallskip

\end{document}